\newtheorem{thm}{Theorem}[section]
\newtheorem{cor}[thm]{Corollary}
\newtheorem{lemma}[thm]{Lemma}
\newtheorem{prop}[thm]{Proposition}
\newtheorem{conj}[thm]{Conjecture}
\theoremstyle{remark}
\newtheorem{rmk}[thm]{Remark}
\theoremstyle{definition}
\newtheorem{defi}[thm]{Definition}
\numberwithin{equation}{section}
\newcommand{\sym}{\operatorname{Sym}}
\newcommand{\Hom}{\mathrm{Hom}}
\newcommand{\oo}{\mathcal{O}}
\newcommand{\Z}{\mathbb{Z}}
\newcommand{\C}{\mathbb{C}}
\newcommand{\Q}{\mathbb{Q}}
\newcommand{\R}{\mathbb{R}}
\newcommand{\A}{\mathbb{A}}
\newcommand{\tr}{\operatorname{Tr}}
\newcommand{\modulo}{\operatorname{mod}}
\newcommand{\mat}[4]{\left(\begin{matrix}#1&#2\\#3&#4\end{matrix}\right)}
\newcommand{\lrangle}[2]{\langle #1,#2\rangle}
\newcommand{\rmm}[1]{\mathrm{#1}}
\newcommand{\bff}[1]{\mathbf{#1}}
\newcommand{\set}[2]{\left\{#1\,\middle\vert\,#2\right\}}
\newcommand{\call}[1]{\mathcal{#1}}
\newcommand{\frakk}[1]{\mathfrak{#1}}
\newcommand{\mbb}[1]{\mathbb{#1}}
\newcommand{\GL}{\mathbf{GL}}
\newcommand{\SL}{\mathbf{SL}}
\begin{document}
\title[Rationality of quaternionic Eisenstein series on ${\rm U}(2,n)$]{Rationality of quaternionic Eisenstein series on ${\rm U}(2,n)$}
\author{Henry H. Kim and Yi Shan}
\date{\today}
\thanks{The first author is partially supported by NSERC grant \#482564.
}
\subjclass[2010]{Primary 11F30, Secondary 11F55}
\address{Henry H. Kim \\
Department of mathematics \\
 University of Toronto \\
Toronto, Ontario M5S 2E4, CANADA \\
and Korea Institute for Advanced Study, Seoul, KOREA}
\email{henrykim@math.toronto.edu}

\address{Yi Shan \\
Department of mathematics \\
 University of Toronto \\
Toronto, Ontario M5S 2E4, CANADA}
\email{yshan@math.utoronto.ca}


\keywords{Eisenstein series, unitary groups, quaternionic modular forms}

\date{\today}

\begin{abstract}
    Let $\bff{G}=\rmm{U}(2,n)$ be the unitary group associated to a Hermitian space over a quadratic imaginary number field $E$. We assume that 2 is unramified in $E$, and the Hermitian space splits at all finite places and has signature $(2,n)$, where $n\equiv 2\modulo 4$.
    A theory of Fourier expansions of quaternionic modular forms on $\bff{G}$ is developed in \cite{HMY}.
    In this paper, we define a family of degenerate Heisenberg Eisenstein series $E_{\ell}$ for $\ell>n$ on $\bff{G}$,
    which is a weight $\ell$ quaternionic modular form,
    and we explicitly compute their Fourier expansions.
    We prove that the Fourier coefficients of $E_{\ell}$ are rational in a certain sense,
    and that their denominators are uniformly bounded by an integer depending only on $\ell,n$, and $E$.
    This provides the first family of quaternionic Eisenstein series whose Fourier coefficients are known to be rational or algebraic.
\end{abstract}
\maketitle
\setcounter{tocdepth}{1}
\tableofcontents

\section{Introduction}
When the symmetric space of a reductive $\Q$-group $\bff{G}$ has a Hermitian structure,
there is a good theory of holomorphic modular forms,
\emph{i.e.\,}one may consider the automorphic forms on $\bff{G}$ whose archimedean component lies in a holomorphic discrete series representation of $\bff{G}(\R)$.
In many situations,
these holomorphic modular forms admit classical Fourier expansions,
and their Fourier coefficients often encode interesting arithmetic information.
Eisenstein series form a special family of holomorphic modular forms,
and the rationality of their Fourier coefficients is known in many cases \cite{B, H, Ka1, Sh, Ts}.
Moreover, these Fourier coefficients are known to have bounded denominators.

In \cite{GW}, Gross and Wallach singled out a special class of \emph{quaternionic discrete series representations} for certain groups $\bff{G}$,
and when it takes the place of the holomorphic discrete series above,
one obtains the \emph{quaternionic modular forms} on $\bff{G}$.
A general theory of Fourier expansions of quaternionic modular forms on quaternionic exceptional groups and $\rmm{SO}(n,4)$ are developed in \cite{Po20},
and an analogous theory for the unitary group $\rmm{U}(2,n)$ is treated in \cite{HMY}. 
In \cite{Po20b},
Pollack defines a family of quaternionic (degenerate) Heisenberg Eisenstein series in the settings of \cite{Po},
and computes some of their Fourier coefficients.

One motivation of our work is the study of a family of Eisenstein series on $\rmm{SO}(3,n+1)$ defined in \cite{Po20}.
Although they are neither holomorphic nor quaternionic,
they admit an analogous theory of Fourier expansion,
and their Fourier coefficients are proved to be rational in a certain sense in \cite{KY2}.
In this paper, we obtain a similar rationality result for the quaternionic degenerate Heisenberg Eisenstein series on a unitary group $\rmm{U}(2,n)$ with $n$ even.

We now set up the notation necessary to state \Cref{main theorem}.
For any $n\equiv 2\modulo 4$, let $(\bff{V},\lrangle{\,}{\,})$ be a Hermitian space over a quadratic imaginary number field $E$ 
that splits over all finite places and has signature $(2,n)$ at the archimedean place.
Let $\bff{G}=\rmm{U}(2,n)$ be the unitary group associated with $\bff{V}$,
and $\bff{P}=\bff{MN}$ the parabolic subgroup of $\bff{G}$ stabilizing a fixed isotropic line in $\bff{V}$.
The Levi factor $\bff{M}$ is isomorphic to $\rmm{Res}_{E/\Q}(\bff{G}_{\rmm{m}})\times \bff{U}(\bff{V}_{0})$,
where $\bff{V}_{0}$ is a Hermitian subspace of $\bff{V}$ with signature $(1,n-1)$.
The unipotent radical $\bff{N}$ is non-abelian with a one-dimensional center $\bff{Z}=[\bff{N},\bff{N}]$,
and the quotient $\bff{N}(\Q)/\bff{Z}(\Q)$ can be identified with $\bff{V}_{0}$.
The Lie group $\bff{G}(\R)$ has a maximal compact subgroup $K_{\infty}=\rmm{U}(2)\times \rmm{U}(n)$,
and for any natural number $\ell$, 
we define $\mbb{V}_{\ell}$ to be the irreducible representation $\left(\sym^{2\ell}\C^{2}\otimes\det_{\rmm{U}(2)}^{-\ell}\right)\boxtimes \bff{1}$ of $K_{\infty}$.

A \emph{weight $\ell$ quaternionic modular form on $\bff{G}$} is a smooth function $F:\bff{G}(\A)\rightarrow \mbb{V}_{\ell}$ of moderate growth such that 
$F$ is left $\bff{G}(\Q)$-invariant, right $K_{\infty}$-equivariant, and its restriction to $\bff{G}(\R)$ is killed by certain differential operators.
According to \cite{HMY},
the constant term of $F$ along $\bff{Z}$ has the following Fourier expansion: 
for any $g=g_{f}g_{\infty}\in \bff{G}(\A_{f})\times\bff{G}(\R)$,
\begin{align}
    \label{eqn Fourier QMF}
    F_{\bff{Z}}(g)=F_{\bff{N}}(g)+\sum_{\substack{0\neq T\in \bff{V}_{0}\\ \lrangle{T}{T}\geq 0}}a_{T}(F,g_{f})\call{W}_{T}(g_{\infty}),
\end{align}
where $F_{\bff{N}}$ is the constant term of $F$ along $\bff{N}$,
$a_{T}(F,g_{f}):\bff{G}(\A_{f})\rightarrow\C$ is a locally constant function,
and $\call{W}_{T}:\bff{G}(\R)\rightarrow \mbb{V}_{\ell}$ is the so-called \emph{generalized Whittaker function}.
The function $\call{W}_{T}$ plays a role similar to that of $e^{2\pi i nz}$ in the theory of holomorphic modular forms on $\SL(2,\Z)$.

In this paper, we define a $\mbb{V}_{\ell}$-valued Eisenstein series $E_{\ell}(g,s)$ associated with the (unnormalized) degenerate principal series $\rmm{Ind}_{\bff{P}(\A)}^{\bff{G}(\A)}|\nu|_{\A_{E}}^{s}$,
where $\nu$ is the projection $\bff{M}\rightarrow \rmm{Res}_{E/\Q}(\bff{G}_{\rmm{m}})$.
We show that when $s=\ell+1>n+1$,
this Eisenstein series $E_{\ell}(g):=E_{\ell}(g,s=\ell+1)$ converges absolutely and is indeed a quaternionic modular form of weight $\ell$ on $\bff{G}$.
Our main theorem concerns the Fourier expansion of $E_{\ell}(g)$:
\begin{thm}\label{main theorem} (\Cref{thm full Fourier expansion})
Assume that the discriminant of $E$ is odd.
When $\ell>n$, the Eisenstein series   
$E_\ell(g)$
has rational Fourier coefficients in the following sense:
\begin{itemize}
    \item The locally constant functions $a_{T}(E_{\ell},g_{f})$ take values in $\Q$ when restricted to $\bff{M}(\A_{f})$.
    \item The constant term $E_{\ell,\bff{N}}(g)$ is a rational multiple of $|\nu(g)|_{\A_{E}}^{\ell+1}\cdot\frac{\zeta_{E}(\ell+1)}{\pi^{2\ell+1}}[u_{1}^{n}][u_{2}^{n}]$ when restricted to $\bff{M}(\A)$, where $[u_{1}^{n}][u_{2}^{n}]$ denotes a vector in a fixed basis of $\mbb{V}_{\ell}$. 
\end{itemize}
Moreover, the value of the $T$th Fourier coefficient $a_{T}(E_{\ell},g_{f})$ at $g_{f}=1$ 
is non-zero if and only if $T$ lies in a self-dual Hermitian $\oo_{E}$-lattice $\bff{V}_{0}(\Z)$ of $\bff{V}_{0}$.
We have an explicit formula for the Fourier coefficients $a_{T}(E_{\ell},1)$,
and their denominators are all divisors of an integer depending only on $E,\ell$, and $n$.
\end{thm}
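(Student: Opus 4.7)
The plan is to compute the Fourier expansion of $E_{\ell}$ directly by unfolding with respect to the Bruhat decomposition of $\bff{G}$ relative to the Heisenberg parabolic $\bff{P}$, and then analyzing the resulting local integrals place-by-place.

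First I would fix the inducing section $f_{s}\in \Ind{\bff{P}(\A)}{\bff{G}(\A)}|\nu|_{\A_{E}}^{s}$ as a pure tensor: at the archimedean place take $f_{s,\infty}$ to be the $\mbb{V}_{\ell}$-valued vector whose image in the minimal $K_{\infty}$-type realizes the quaternionic weight $\ell$, and at the finite places take the normalized spherical section. A direct check at $s=\ell+1$ shows that the quaternionic differential operators kill $f_{\ell+1}$, and the absolute convergence of $\sum_{\gamma}f_{s}(\gamma g)$ for $s>n+1$ is a standard Eisenstein estimate; this justifies that $E_{\ell}$ is a weight-$\ell$ quaternionic modular form and so admits a Fourier expansion of the shape \eqref{eqn Fourier QMF}.

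Next I would split $E_{\ell}$ using the Bruhat decomposition $\bff{P}(\Q)\backslash\bff{G}(\Q)/\bff{N}(\Q)$ into the open-orbit contribution (producing the non-constant Fourier coefficients indexed by nonzero $T\in\bff{V}_{0}$) and the closed-orbit contributions (producing $E_{\ell,\bff{N}}$). On $\bff{M}(\A)$ the constant term reduces to the image of $f_{s}$ under the global intertwining operator attached to the long Weyl element, which factors place-by-place. At each finite place the Gindikin--Karpelevich formula assembles the local factors into $\zeta_{E}(\ell+1)$ up to a rational constant. At the archimedean place, the intertwining integral acts on the $\mbb{V}_{\ell}$-valued section as multiplication by a ratio of Gamma-values, which after simplification produces the factor $\pi^{-(2\ell+1)}$ and sends the distinguished vector to a rational multiple of $[u_{1}^{n}][u_{2}^{n}]$. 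Combining these factors yields the claimed rational multiple of $|\nu(g)|_{\A_{E}}^{\ell+1}\cdot\zeta_{E}(\ell+1)/\pi^{2\ell+1}\,[u_{1}^{n}][u_{2}^{n}]$.

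For the non-constant coefficients, the open-cell unfolding expresses $a_{T}(E_{\ell},g_{f})\,\call{W}_{T}(g_{\infty})$ as a product of local integrals of $f_{s}$ against the additive character associated with $T$. At the archimedean place, the integral must be identified with $\call{W}_{T}(g_{\infty})$ times an explicit scalar; this is done by verifying that it satisfies the same equivariance and differential-equation characterization of $\call{W}_{T}$ given in \cite{HMY} and then pinning down the scalar on a convenient torus section. At each finite place the unramified local integral is evaluated in closed form and is seen to take values in $\Q$ when restricted to $\bff{M}(\A_{f})$. When $g_{f}=1$, the local support conditions globally force $T\in\bff{V}_{0}(\Z)$; moreover, the explicit product formula presents $a_{T}(E_{\ell},1)$ as a finite product of ratios of local Euler factors of $\zeta_{E}$ at integer arguments, from which the uniform denominator bound depending only on $E,\ell,n$ is read off. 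The main obstacle will be the archimedean computation: matching the archimedean integral with $\call{W}_{T}$ is a delicate vector-valued special-function calculation, and the hypothesis $n\equiv 2\pmod{4}$ is expected to enter through parity conditions in the Gamma-factor ratios appearing both in the intertwining integral for the constant term and in the Whittaker matching, so that tracking normalizations consistently with \cite{HMY} is where the bulk of the careful work lies.
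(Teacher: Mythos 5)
Your overall unfolding strategy is the right one, but two steps as described would fail. First, the constant term. You assert that on $\bff{M}(\A)$ the constant term ``reduces to the image of $f_{s}$ under the global intertwining operator attached to the long Weyl element,'' to be evaluated by Gindikin--Karpelevich. This has the roles reversed. The constant term along $\bff{N}$ is a sum of three contributions indexed by the $\bff{P}(\Q)$-orbits on isotropic lines: the identity coset (giving $f_{\ell}(g,s)$ itself), an intermediate orbit, and the long cell (the genuine intertwining operator). In the paper the last two contributions are shown to \emph{vanish} at $s=\ell+1$: the intermediate one because the relevant archimedean integral produces the factor ${_{2}F_{1}}(\ell,-\ell;1;1)=0$, and the long one because it transforms under the center of $\bff{M}(\R)$ by $r^{2n-2\ell}$, which for $\ell>n$ is incompatible with the exponents $r^{2\ell+2}$ and $r^{\ell+2}$ forced by the quaternionic differential equations. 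What survives is the identity-coset term, and the factor $\zeta_{E}(\ell+1)$ comes not from any intertwining operator but from the finite part of the section itself, since $f_{fte}(1,s)=\int|t|^{s}\Phi_{f}(tb_{1})\,dt=\zeta_{E,f}(s)$. Computing the long intertwining operator by Gindikin--Karpelevich would give a ratio of $L$-values that is not the answer, and without the two vanishing arguments your constant term is not identified at all.

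Second, the finite-place evaluation for anisotropic $T$ is much more than ``ratios of local Euler factors of $\zeta_{E}$ at integer arguments.'' The local integral at $p$ is a quadratic Siegel series for the quadratic $\Z_{p}$-lattice underlying $\bff{V}_{0}(\Z_{p})$; it equals $(1-p^{n-2s})Q_{T,p}(p^{\frac{n+1}{2}-s})$ (up to a bounded rational factor at ramified $p$), where $Q_{T,p}$ is a polynomial of degree $2v_{p}(\lrangle{T}{T})$ with coefficients in $\Z[p^{1/2}]$ depending on $T$, and at ramified primes one needs a new Siegel-series computation (the paper's Theorem 4.3) that cannot simply be cited. Rationality and the uniform denominator bound then require three further inputs you do not mention: the functional equation $X^{2k}Q_{T,p}(X^{-1})=Q_{T,p}(X)$ together with the parity structure of its coefficients, which is what makes $Q_{T,p}(p^{\ell-\frac{n-1}{2}})$ an integer despite the half-integral powers of $p$; the identification of the archimedean scalar as a rational multiple of $\pi^{2\ell-n+2}\lrangle{T}{T}^{2\ell-n+1}$, obtained via a Fourier transform on $V_{0}$ viewed as a quadratic space of signature $(2,2n-2)$ and a hypergeometric identity, not merely by evaluating on a torus; and the cancellation of that power of $\pi$ against $\prod_{p}(1-p^{n-2\ell-2})=\zeta(2\ell-n+2)^{-1}$ using the Bernoulli-number formula, which is where the evenness of $n$ actually enters (rather than through Gamma-factor parities). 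Without these ingredients the argument does not yield rationality, let alone a denominator bound independent of $T$.
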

\begin{rmk}
    \label{rmk assumptions}
    Here are some remarks on our assumptions:
    \begin{enumerate}
        \item To remove the assumption on the parity of the discriminant of $E$, 
        one needs to perform the local ramified computation in \Cref{section rank 2 Fourier coefficients finite part} for $p=2$.
        \item We only consider the group $\rmm{U}(2,n)$ with $n$ even. 
        If one could obtain a result similar to \Cref{thm Siegel series ramified case} for odd $n$,
        then an explicit Fourier expansion of $E_{\ell}$ would also be obtained.
        The problem is that in this case the $T$th Fourier coefficient for any anisotropic $T\in\bff{V}_{0}$ is a rational multiple of $\frac{\pi^{2\ell-n+2}}{\zeta(2\ell-n+2)}$, and we do not even know the algebraicity of this fraction.
    \end{enumerate}
\end{rmk} 
The Fourier coefficients can be written as certain Eulerian integrals,
so the proof of \Cref{main theorem} is reduced to local computations at $g_{f}=1$.
The most complicated part is the computation of $a_{T}(E_{\ell},1)$ for anisotropic vectors $T\in\bff{V}_{0}$,
and our strategy is to consider $\bff{V}_{0}$ as a quadratic space, with symmetric bilinear form $\tr_{E/\Q}\lrangle{\,}{\,}$:
\begin{itemize}
    \item For a prime $p$, the $\Q_{p}$-component of $a_{T}(E_{\ell},1)$ is a combination of several quadratic Siegel series.
    When $p$ is unramified in $E$, we directly apply a result from \cite{KY2} on the Siegel series.
    When $p$ is ramified, we prove a ramified version of this result.
    In both cases, the $\Q_{p}$-component turns out to be the product of $(1-p^{n-2\ell-2})$ with $p^{(n-2\ell-1)v_{p}(\lrangle{T}{T})}Q_{T,p}(p^{\ell-\frac{n-1}{2}})$
    (together with some rational factor independent of $T$ when $p$ is ramified), where $Q_{T,p}$ is a polynomial in the ideal $\Z[X^{2}]+p^{1/2}X\Z[X^{2}]$ of $\Z[p^{1/2}][X]$, satisfying a certain functional equation.
    Hence, the product of all the finite components of $a_{T}(E_{\ell},1)$ is a rational multiple of $\zeta(2\ell-n+2)^{-1}\lrangle{T}{T}^{n-2\ell-1}$, and the denominator of this rational number is bounded by an integer independent of $T$.
    \item The archimedean component
    can be written as the Fourier transform (on $\bff{V}_{0}(\R)$, viewed as a quadratic space) of a complicated function $F_{0,\ell}$.
    Following the strategy of \cite{Po}, 
    we compare $F_{0,\ell}$ to the inverse Fourier transform of $\rmm{Char}(\lrangle{T}{T}>0)\lrangle{T}{T}^{2\ell-n+1}\call{W}_{T}(1)$,
    and find that the archimedean contribution to $a_{T}(E_{\ell},1)$ is the product of $\pi^{2\ell-n+2}\lrangle{T}{T}^{2\ell-n+1}$ with a rational number independent of $T$.
\end{itemize}
Combining the two parts together, 
we conclude that for an anisotropic $T$, 
the $T$th Fourier coefficient $a_{T}(E_{\ell},1)$ is a rational number with a uniformly bounded denominator. 
This gives the first rationality (or algebraicity) result for a family of quaternionic Eisenstein series.
\begin{rmk}
    \label{rmk compare to other works}
    The Fourier coefficients of the quaternionic modular forms on quaternionic exceptional groups considered in \cite{Po20b}
    are more complicated than ours, 
    because cubic structures are involved
    and they have more ranks (five) compared to three in our case.
\end{rmk} 

We organize our paper as follows. 
We first review the structure of $\bff{G}=\rmm{U}(2,n)$ in \Cref{section rank 2 unitary groups}.
In \Cref{section quaternionic Heisenberg Eisenstein series},
we recall the theory of quaternionic automorphic forms on $\bff{G}$
and define the quaternionic Heisenberg Eisenstein series $E_\ell(g,s=\ell+1)$.
The heart of the paper is \Cref{section rank 2 Fourier}, 
where we compute the rank 2 Fourier coefficients,
while the Fourier coefficients of other ranks are calculated in \Cref{section lower rank Fourier}.
Finally, 
we construct a conjectural Saito-Kurokawa type lift on $\bff{G}$ from a holomorphic modular form on $\SL_{2}(\Z)$ in \Cref{section conj Saito Kurokawa lift},
as a potential application of this paper.

\smallskip

\section{Preliminaries on the unitary groups of real rank 2}
\label{section rank 2 unitary groups}
Let $D$ be a square-free natural number with $D\equiv 3\modulo 4$,
and $E$ the imaginary quadratic number field $\Q(\sqrt{-D})$,
as a subfield of $\C$.
Denote the nonzero element in $\mathrm{Gal}(E/\Q)$ by $x\mapsto \overline{x}$,
and for any $\Q$-algebra $R$ we extend this involution to $E\otimes_{\Q}R$ and define 
$\tr_{E\otimes_{\Q}R}:E\otimes_{\Q}R\rightarrow R$ to be the trace map $x\mapsto x+\overline{x}$.

For any place $v$ of $\Q$,
let $|\,|_{\Q_{v}}$ be the normalized absolute value on the local field $\Q_{v}$.
Set $E_{v}:=E\otimes_{\Q}\Q_{v}$,
and denote by $|\,|_{E_{v}}$ the absolute value on $E_{v}$ defined by $x\mapsto |x\overline{x}|_{\Q_{v}}$.

Denote by $\A=\A_{\Q}$ (\emph{resp.\,}$\A_{E}$) the adèle ring of $\Q$ (\emph{resp.\,}$E$).
Fix an additive character $\psi=\psi_{\Q}$ on $\Q\backslash\A$ 
satisfying that for every finite prime $p$, $\psi_{p}$ is an additive character of $\Q_{p}$ with conductor $\Z_{p}$,
and at the archimedean place, $\psi_{\infty}(x)=e^{2\pi i x}$.  
Set $\psi_{E}$ to be the additive character of $E\backslash\A_{E}$
that sends $x\in E$ to $\psi(\tr_{\A_{E}}(\frac{x}{\sqrt{-D}}))$.
The additive character $\psi_{E}$ can be decomposed as $\otimes_{v}\psi_{E,v}$ with the following properties:
\begin{itemize}
    \item At the archimedean place $\infty$, $E_{\infty}=\C$ and the character $\psi_{E,\infty}$ sends $z\in \C$ to $e^{2\pi i \cdot\mathrm{Re}(z)}$.
    \item When $p=\frakk{p}_{1}\frakk{p}_{2}$ splits in $E$, $E_{p}=E_{\frakk{p}_{1}}\times E_{\frakk{p}_{2}}\simeq \Q_{p}\times \Q_{p}$, and the character $\psi_{E,\frakk{p}_{i}}$, $i=1,2$, has conductor $\oo_{E_{\frakk{p}_{i}}}\simeq \Z_{p}$.
    \item When $p$ is inert in $E$, $E_{p}$ is an unramified quadratic field extension of $\Q_{p}$, and the character $\psi_{E,p}$ has conductor $\oo_{E_{p}}$.
    \item When $p=\frakk{p}^{2}$ is ramified in $E$, $E_{p}=E_{\frakk{p}}$ is a ramified quadratic field extension of $\Q_{p}$. The additive character $\psi\circ\tr_{E_{\frakk{p}}}:E_{\frakk{p}}\rightarrow\C^{\times}$ has conductor $\frakk{p}^{-1}$,
    thus $\psi_{E,\frakk{p}}$ has conductor $\oo_{E_{\frakk{p}}}$.
\end{itemize}
\begin{rmk}\label{rmk exclude ramified 2}
    We take $D$ with $D\equiv 3\modulo 4$ to avoid $p=2$ being ramified in $E=\Q(\sqrt{-D})$.
    If one considers a general imaginary quadratic number field $E$,
    the additive character $\psi_{E}$ should be replaced by $\psi(\tr_{\A_{E}}(\frac{x}{\sqrt{\Delta_{E}}}))$,
    where $\Delta_{E}$ is the discriminant of $E$.
\end{rmk}

\subsection{Hermitian spaces}
\label{section Hermitian spaces}
For any natural number $n\equiv 2 \modulo 4$,
let $(\bff{V},\lrangle{\,}{\,})$ be a non-degenerate Hermitian space over $E$ with signature $(2,n)$
such that $\bff{V}$ is split over all the finite places,
where the Hermitian form is conjugate-linear in the second variable.
The existence of such a Hermitian space is guaranteed by the Hasse principle (for example, see \cite[Chap. 10]{Scharlau}).
For any $\Q$-algebra $R$,
we denote by $\bff{V}(R)$ the Hermitian module $\bff{V}\otimes_{\Q}R$ over $E\otimes_{\Q}R$. 
Note that $\bff{V}=\bff{V}(\Q)$.

Fix a pair of isotropic vectors $(b_{1},b_{2})$ in $\bff{V}$
such that $\lrangle{b_{1}}{b_{2}}=1$,
and take $\bff{V}_{0}$ to be the orthogonal complement of $Eb_{1}\oplus Eb_{2}$ in $\bff{V}$,
which is a non-degenerate Hermitian space of signature $(1,n-1)$.
We fix again a pair of isotropic vectors $(c_{1},c_{2})$ in $\bff{V}_{0}$ with $\lrangle{c_{1}}{c_{2}}=1$,
then $\bff{V}$ is the orthogonal direct sum of $Eb_{1}\oplus Eb_{2}\oplus Ec_{1}\oplus Ec_{2}$ and a Hermitian space $\bff{V}_{00}$ with signature $(0,n-2)$.

Take $u_{1}=\frac{1}{\sqrt{2}}(b_{1}+b_{2})$, $u_{2}=\frac{1}{\sqrt{2}}(c_{1}+c_{2})$, $v_{n}=\frac{1}{\sqrt{2}}(b_{1}-b_{2})$ and $v_{n-1}=\frac{1}{\sqrt{2}}(c_{1}-c_{2})$,
and they form an orthogonal basis of $\C b_{1}\oplus \C b_{2}\oplus \C c_{1}\oplus \C c_{2}\subseteq V:=\bff{V}(\R)$.
Fix a basis $\{v_{1},\ldots,v_{n-2}\}$ of $V_{00}:=\bff{V}_{00}(\R)$ such that 
$\lrangle{v_{i}}{v_{j}}=-\delta_{ij}$ for any $1\leq i,j\leq n-2$.
We have obtained a basis of $V$ with the following properties:
\begin{itemize}
    \item $\lrangle{u_{i}}{u_{j}}=\delta_{ij}$, for any $1\leq i,j\leq 2$;
    \item $\lrangle{v_{i}}{v_{j}}=-\delta_{ij}$, for any $1\leq i,j\leq n$;
    \item $\lrangle{u_{i}}{v_{j}}=0$ for any $1\leq i\leq 2$ and $1\leq j\leq n$;
    \item $\{u_{2},v_{1},\ldots,v_{n-1}\}$ is a basis for $V_{0}:=\bff{V}_{0}(\R)$.
\end{itemize}
Set $V_{2}^{+}=\C u_{1}\oplus \C u_{2}$ and $V_{n}^{-}=\C v_{1}\oplus \cdots\oplus \C v_{n}$,
and we have a decomposition $V_{2}^{+}\oplus V_{n}^{-}$ of $V$ into definite subspaces.

The Hermitian space $\bff{V}_{0}$ over $E$ admits a self-dual lattice $L$,
\emph{i.e.\,}
\[L=\set{v\in \bff{V}_{0}}{\lrangle{v}{w}\in \oo_{E}\text{ for any }w\in L}.\]
We fix this lattice,
and by an abuse of notation,
denote it by $\bff{V}_{0}(\Z)$,
and the self-dual lattice $L\otimes_{\Z}\Z_{p}$ in $\bff{V}_{0}(\Q_{p})$ by $\bff{V}_{0}(\Z_{p})$.

\subsection{Unitary group \texorpdfstring{${\rm U}(2,n)$}{} and its Heisenberg parabolic subgroup}
\label{section unitary group and its Heisenberg parabolic}
Define $\bff{G}=\bff{U}(\bff{V})={\rm U}(2,n)$ to be the unitary group associated with the Hermitian space $(\bff{V},\langle{\,}{\,}\rangle)$,
and write the $\bff{G}$-action on $\bff{V}$ as a right action.
By the locally split assumption on $\bff{V}$,
$\bff{G}$ is quasi-split over $\Q_{p}$ for all $p$.

The Heisenberg parabolic subgroup $\bff{P}$ of $\bff{G}$ is defined as the stabilizer of the isotropic line spanned by $b_{1}$,
and it admits the following Levi decomposition $\bff{P}=\bff{MN}$:
\begin{itemize}
    \item The Levi factor $\bff{M}$ is chosen to be the stabilizer in $\bff{P}$ of the isotropic line $Eb_{2}$. 
    By its action on $Eb_{1}\oplus \bff{V}_{0}\oplus Eb_{2}$,
    one identifies this subgroup with $\rmm{Res}_{E/\Q}(\bff{G}_{\rmm{m}})\times\bff{U}(\bff{V}_{0})$,
    so that $(z,h)\in \rmm{Res}_{E/\Q}(\bff{G}_{\rmm{m}})\times \bff{U}(\bff{V}_{0})\simeq \bff{M}$ acts on $\bff{V}$ by
    \[(xb_{1}+w+yb_{2})\cdot(z,h)=z^{-1}xb_{1}+wh+\overline{z}yb_{2},\text{ for any }x,y\in\bff{G}_{\rmm{a}},\,w\in \bff{V}_{0}.\]
    \item The unipotent radical $\bff{N}$ is a two-step nilpotent group, and it is isomorphic to the group 
    \[\set{(v,\lambda)\in \bff{V}_{0}\times \rm{Res}_{E/\Q}\bff{G}_{\rm{a}}}{\overline{\lambda}=-\lambda},\]
    equipped with the following multiplication:
    \[(v_{1},\lambda_{1})\cdot(v_{2},\lambda_{2}):=\left(v_{1}+v_{2},\lambda_{1}+\lambda_{2}-\frac{\lrangle{v_{1}}{v_{2}}-\lrangle{v_{2}}{v_{1}}}{2}\right).\]
    The action of the corresponding element $n(v,\lambda)\in\bff{N}$ on $\bff{V}$ is given as: 
    \[b_{1}\mapsto b_{1},\,b_{2}\mapsto (-\frac{1}{2}\lrangle{v}{v}+\lambda)b_{1}+b_{2}+v,\,w\in \bff{V}_{0}\mapsto -\lrangle{w}{v}b_{1}+w.\]
    The center of $\bff{N}$ is $\bff{Z}:=\set{n(0,\lambda)}{\overline{\lambda}=-\lambda}$, which has $\Q$-dimension $1$.
\end{itemize}
The conjugation action of $\bff{M}$ on $\bff{N}$ is given by 
\[(z,h)n(v,\lambda)(z^{-1},h^{-1})=n(\overline{z}vh^{-1},z\overline{z}\lambda).\]
Denote by $\nu$
the similitude character of $\bff{P}$: 
\[\nu\left((z,h)n(v,\lambda)\right)=z\in \rmm{Res}_{E/\Q}\bff{G}_{\rmm{m}},\text{ for any }(z,h)\in\bff{M},\,n(v,\lambda)\in\bff{N},\]
then we have $b_{1}p=\nu(p)^{-1}b_{1}$ for any $p\in \bff{P}$.

Using the unitary character $\psi_{E}:E\backslash\A_{E}\rightarrow\mbb{S}^{1}\subset\C^{\times}$,
we identify $\bff{V}_{0}$ with $\Hom([\bff{N}],\mbb{S}^{1})$
by sending a vector $T\in\bff{V}_{0}$ to the unitary character
\[\chi_{T}(n(v,\lambda)):=\psi_{E}(\lrangle{T}{v})=\psi\left(\tr_{\A_{E}} \left(\frac {\lrangle{T}{v}}{\sqrt{-D}}\right)\right).\]

\section{Quaternionic Heisenberg Eisenstein series}
\label{section quaternionic Heisenberg Eisenstein series}
\subsection{Quaternionic modular forms on \texorpdfstring{${\rm U}(2,n)$}{}}
\label{section quaternionic modular forms on unitary groups}
We first recall from \cite[\S 3]{HMY} the theory of quaternionic modular forms on the unitary group $\bff{G}={\rm U}(2,n)$.

In \Cref{section Hermitian spaces},
we have a decomposition $V=V_{2}^{+}\oplus V_{n}^{-}$ of $V=\bff{V}(\R)$ into definite subspaces.
The group $K_{\infty}=\rmm{U}(V_{2}^{+})\times\rmm{U}(V_{n}^{-})\simeq \rmm{U}(2)\times \rmm{U}(n)$
is a maximal compact subgroup of $\bff{G}(\R)$.
For any integer $\ell\geq 1$,
consider the following irreducible representation of $K_{\infty}$:
\[\mbb{V}_{\ell}:=\left(\sym^{2\ell}V_{2}^{+}\otimes {\det}^{-\ell}_{\bff{U}(V_{2}^{+})}\right)\boxtimes\bff{1}.\]
Fix a basis $\{[u_{1}^{\ell-v}][u_{2}^{\ell+v}],\,-\ell\leq v\leq \ell\}$ of $\mbb{V}_{\ell}$, where $[u_{i}^{m}]:=\frac{u_{i}^{m}}{m!}\in \sym^{m}V_{2}^{+}$, $i=1,2$, $0\leq m\leq 2\ell$.

When $\ell\geq n$,
Gross and Wallach \cite{GW} construct an irreducible unitary discrete series representation $\Pi_{\ell}$ of $\bff{G}(\R)$,
which contains $\Bbb V_{\ell}$ as its minimal $K_{\infty}$-type with multiplicity $1$,
and two Schmid operators $\call{D}_{\ell}^{\pm}$ associated with $\Pi_{\ell}$ are constructed in \cite[\S 3.1]{HMY}.
\begin{defi}\label{def quaternionic modular form on unitary groups}
    \cite[Definition 3.1]{HMY}
    A \emph{weight $\ell$ quaternionic modular form on $\bff{G}={\rm U}(2,n)$} is a smooth function 
    $F:\bff{G}(\A)\rightarrow \mbb{V}_{\ell}$ of moderate growth such that:
    \begin{enumerate}
        \item For any $g\in \bff{G}(\A)$ and $\gamma\in \bff{G}(\Q)$, $F(\gamma g)=F(g)$ .
        \item For every $k\in K_{\infty}$ and $g\in\bff{G}(\A)$, $F(gk)=F(g)\cdot k$.
        \item The functions $\call{D}_{\ell}^{\pm}\left(F|_{\bff{G}(\R)}\right)$ vanish identically on $\bff{G}(\R)$.
    \end{enumerate}
\end{defi}
Let $F$ be a weight $\ell$ quaternionic modular form on $\bff{G}$.
If $T\in\bff{V}_{0}$, then the \emph{$T$-th Fourier coefficient of $F$ along $\bff{N}$} is 
\[F_{\bff{N},T}:\bff{G}(\A)\rightarrow \mbb{V}_{\ell},\,g\mapsto \int_{[\bff{N}]}F(ng)\overline{\chi_{T}(n)}dn.\]
We have the following result on the Fourier expansion of $F$:
\begin{thm}\label{thm Fourier expansion theorem of quaternionic modular forms}
    \cite[Corollary 3.7]{HMY}
    Let $F:\bff{G}(\A)\rightarrow\mbb{V}_{\ell}$ be a weight $\ell$ quaternionic modular form,
    and $F_{\bff{Z}}$ (\emph{resp.\,}$F_{\bff{N}}$) the constant term of $F$ along $\bff{Z}$ (\emph{resp.\,}$\bff{N}$).
    There exist locally constant functions 
    \[\{a_{T}(F,\cdot):\bff{G}(\A_{f})\rightarrow \C\}_{T\in\bff{V}_{0},\,\lrangle{T}{T}\geq 0}\]
    such that for any $g_{f}\in \bff{G}(\A_{f})$ and $g_{\infty}\in \bff{G}(\R)$ one has:
    \begin{align}\label{eqn general Fourier expansion of modular form}
        F_{\bff{Z}}(g_{f}g_{\infty})=F_{\bff{N}}(g_{f}g_{\infty})+\sum_{\substack{0\neq T\in \bff{V}_{0}\\\lrangle{T}{T}\geq 0}}a_{T}(F,g_{f})\call{W}_{T}(g_{\infty}),
    \end{align}
    where $\call{W}_{T}$ is the $\mbb{V}_{\ell}$-valued function on $\bff{G(\R)}$ given by the formula
    \begin{align}\label{eqn generalized Whittaker function}
        \call{W}_{T}\left(nmk\right)=\chi_{T,\infty}(n)\sum_{v=-\ell}^{\ell}|\nu(m)|^{2\ell+2}\left(\frac{|\beta_{T}(m)|}{\beta_{T}(m)}\right)^{v}K_{v}\left(|\beta_{T}(m)|\right)[u_{1}^{\ell-v}][u_{2}^{\ell+v}]\cdot k,
    \end{align}
    for any $n\in\bff{N}(\R),m\in\bff{M}(\R)$ and $k\in K_{\infty}$.
    Here $K_{v}$ denotes the K-Bessel function $K_{v}(x)=\displaystyle\frac 12\int_{0}^{\infty}t^{v-1}e^{-x(t+t^{-1})}dt$ and 
    $\beta_{T}:\bff{M}(\R)\rightarrow\C$ is defined by $\beta_T(z,h)=4\sqrt{2}\pi \lrangle{u_{2}}{zT\cdot h}$.

    Moreover, 
    If $F$ is cuspidal, then the Fourier expansion of $F_{\mathbf{Z}}$ takes the form
    $$F_Z(g_{f}g_\infty)=\sum_{T\in \bff{V}_0\atop \langle T,T\rangle> 0} a_T(F,g_{f})\mathcal W_T(g_\infty).$$
\end{thm}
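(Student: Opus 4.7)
The plan is to derive the expansion in three stages: first perform an abelian Fourier expansion on the quotient $\bff{N}/\bff{Z}$, then invoke a multiplicity-one property for the archimedean generalized Whittaker spaces to separate the finite and archimedean variables, and finally analyze which $T$ can support nonzero coefficients.

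First, since $\bff{Z}=[\bff{N},\bff{N}]$, the quotient $\bff{N}/\bff{Z}\simeq \bff{V}_{0}$ is abelian, so the function $F_{\bff{Z}}$, which is left-invariant under $\bff{N}(\Q)\bff{Z}(\A)$, descends in its $\bff{N}$-variable to a function on the compact abelian group $\bff{V}_{0}(\Q)\backslash\bff{V}_{0}(\A)$. Pontryagin duality, with characters realized as $\chi_{T}$ for $T\in\bff{V}_{0}$ via $\psi_{E}$, yields
\[
F_{\bff{Z}}(g)=F_{\bff{N}}(g)+\sum_{0\neq T\in\bff{V}_{0}}F_{\bff{N},T}(g),
\]
where each $F_{\bff{N},T}$ transforms by $\chi_{T}$ under left $\bff{N}(\A)$-translation.

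Second, I would factor each $F_{\bff{N},T}(g_{f}g_{\infty})$ as $a_{T}(F,g_{f})\call{W}_{T}(g_{\infty})$. For each fixed $g_{f}$, the function $g_{\infty}\mapsto F_{\bff{N},T}(g_{f}g_{\infty})$ is of moderate growth, right $K_{\infty}$-equivariant of type $\mbb{V}_{\ell}$, left-translates by $\chi_{T,\infty}$ under $\bff{N}(\R)$, and is annihilated by $\call{D}^{\pm}_{\ell}$, all inherited from $F$. The decisive input is multiplicity one: the space of smooth $\mbb{V}_{\ell}$-valued functions on $\bff{G}(\R)$ satisfying all these conditions is at most one-dimensional for each $T$, and I would reduce its construction to an ODE on $\bff{M}(\R)$ via the Iwasawa decomposition $\bff{G}(\R)=\bff{N}(\R)\bff{M}(\R)K_{\infty}$. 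Restricting the Schmid system to a generic element of $\bff{M}(\R)$, parameterized by $\nu(m)$ and $\beta_{T}(m)$, converts it into the modified Bessel equation in each $K_{\infty}$-weight component, whose two linearly independent solutions are the $I_{v}$ and $K_{v}$ Bessel functions; the stated weight-by-weight formula then emerges after matching $K$-types across the basis $[u_{1}^{\ell-v}][u_{2}^{\ell+v}]$.

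Third, I would establish the support condition $\lrangle{T}{T}\geq 0$ using moderate growth: among the two Bessel solutions, only $K_{v}$ decays at infinity, and $K_{v}(|\beta_{T}(m)|)$ is well-defined as a decaying function precisely when $\beta_{T}(m)$ can be made real and positive through suitable $\bff{M}(\R)$-translation; unwinding the definition $\beta_{T}(z,h)=4\sqrt{2}\pi\lrangle{u_{2}}{zT\cdot h}$ shows this forces $\lrangle{T}{T}\geq 0$, eliminating negative-norm $T$. For the cuspidal addendum, cuspidality directly forces $F_{\bff{N}}\equiv 0$; for isotropic $T$, the character $\chi_{T}$ extends trivially across a unipotent subgroup $\bff{N}'\supsetneq \bff{N}$ whose quotient $\bff{N}'/\bff{N}$ is the unipotent radical of a proper parabolic of $\bff{G}$ stabilizing the line $ET\subset\bff{V}_{0}$, and integrating over $\bff{N}'$ identifies the coefficient with a partial constant term which vanishes under cuspidality.

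The main obstacle is the multiplicity one statement and the accompanying explicit solution of the Schmid system: both require a careful representation-theoretic analysis of the quaternionic discrete series $\Pi_{\ell}$ and its minimal $K_{\infty}$-type following Gross--Wallach, and the translation from the abstract vanishing $\call{D}^{\pm}_{\ell}F=0$ to the concrete Bessel-type ODE is the delicate computation whose output is precisely the closed form \eqref{eqn generalized Whittaker function}.
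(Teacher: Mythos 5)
This statement is not proved in the paper at all: it is quoted verbatim as \cite[Corollary 3.7]{HMY}, and no argument for it appears anywhere in the text. So there is no internal proof to compare your proposal against; the relevant comparison is with the proof in the reference, whose strategy (following Pollack) your outline does track in broad strokes.

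That said, as a standalone proof your proposal has real gaps. The first stage (abelian Fourier expansion of $F_{\bff{Z}}$ over $[\bff{N}/\bff{Z}]\simeq \bff{V}_{0}(\Q)\backslash\bff{V}_{0}(\A)$) is fine. But the entire content of the theorem lives in your second and third stages, and there you assert rather than prove the two decisive facts: (i) that the space of smooth, moderate-growth, $\mbb{V}_{\ell}$-valued, $(\bff{N}(\R),\chi_{T,\infty})$-equivariant functions killed by $\call{D}_{\ell}^{\pm}$ is at most one-dimensional and spanned by the explicit $K$-Bessel sum \eqref{eqn generalized Whittaker function}, and (ii) that this space is zero when $\lrangle{T}{T}<0$. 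These are exactly \cite[Theorem 3.5]{HMY}, which the present paper also invokes separately in \Cref{section archimedean place rank 2 Fourier infinite part}; reducing the Schmid system to a Bessel ODE ``in each $K_{\infty}$-weight component'' is not a one-line matter because the operators $\call{D}_{\ell}^{\pm}$ couple the weight components, and the coupled first-order system must be diagonalized before any Bessel equation appears. Moreover, your mechanism for excluding $\lrangle{T}{T}<0$ is misstated: the issue is not whether $\beta_{T}(m)$ ``can be made real and positive,'' but whether $\beta_{T}$ vanishes somewhere on $\bff{M}(\R)$. Since $u_{2}$ spans the positive-definite line of $V_{0}$, one has $\beta_{T}(m)\neq 0$ for all $m$ exactly when $\lrangle{T}{T}\geq 0$ (a vector of nonnegative norm cannot lie in $u_{2}^{\perp}\cap V_{0}$, which is negative definite), whereas for $\lrangle{T}{T}<0$ the locus $\beta_{T}=0$ is nonempty and smoothness plus moderate growth force the solution to vanish identically. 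Finally, in the cuspidal addendum your claim that there is a unipotent $\bff{N}'\supsetneq\bff{N}$ with $\bff{N}'/\bff{N}$ the unipotent radical of a proper parabolic is not correct as stated ($\bff{N}$ is already the full unipotent radical of $\bff{P}$); the standard argument instead relates the coefficient $a_{T}$ for isotropic $T$ to constant terms along the unipotent radical of the parabolic stabilizing the isotropic plane $Eb_{1}\oplus ET$, which meets $\bff{N}$ but neither contains it nor is contained in it.
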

\begin{rmk}
\label{rmk different additive characters}
Notice that in \cite{HMY}
the function $\beta_{T}(z,h)$ is defined as $2\sqrt{2}\pi\lrangle{u_{2}}{zT\cdot h}$.
This is because their additive character $\psi_{E}$ is chosen to be $\psi_{E}(x)=\psi(\frac{1}{2}\tr_{\A_{E}}(x))$.
\end{rmk}
\begin{rmk}
    \label{rmk structure constant term}
    A result on the structure of the constant term $F_{\bff{N}}$ similar to \cite[Theorem 1.2.1(2)]{Po20}
    is also expected for quaternionic modular forms on $\rmm{U}(2,n)$
    :for $m_{\infty}\in \rmm{U}(1,n-1)\subseteq \bff{M}(\R)$,
    $F_{\bff{N}}(m_{\infty})$ should be $\Phi(m_{\infty})[u_{1}^{2\ell}]+\beta[u_{1}^{\ell}][u_{2}^{\ell}]+\Phi^{\prime}(m_{\infty})[u_{2}^{2\ell}]$,
    where $\beta$ is a constant, $\Phi$ is associated to a holomorphic modular form on $\rmm{U}(1,n-1)$ and $\Phi^{\prime}$ is a certain $K_{\infty}\cap \bff{M}(\R)$-right translate of $\Phi$.
    We will see in \Cref{section constant term of Eisenstein series} that holomorphic modular forms on $\rmm{U}(1,n-1)$ have no contribution to the constant term along $\bff{N}$ of our Heisenberg Eisenstein series. 
\end{rmk}

\subsection{The Heisenberg Eisenstein series \texorpdfstring{$E_{\ell}$}{}}
\label{subsection definition of Heisenberg Eisenstein}
For $\ell\ge n$, the quaternionic discrete series $\Pi_{\ell}$ can be embedded into the (unnormalized) degenerate principal series $\rm{Ind}_{\bff{P}(\R)}^{\bff{G}(\R)}|\nu|_{\C}^{\ell+1}$ \cite[Theorem 9]{Wallach}
(by our notation $|z|_{\C}=z\overline{z}=|z|^{2}$),
thus we now construct an Eisenstein series on $\bff{G}$, 
which is a weight $\ell$ quaternionic modular form,
from some section of $\rmm{Ind}_{\bff{P}(\A)}^{\bff{G}(\A)}|\nu|_{\A_{E}}^{s}$, $s\in \C$.

For each finite prime $p$,
let $\Phi_{p}$ be the characteristic function of the lattice spanned by $\bff{V}_{0}(\Z_{p})$ and $b_{1},b_{2}$,
and $\Phi_{f}:=\otimes_{p<\infty}\Phi_{p}$.
We then choose a section $f_{\ell}(g_{f}g_{\infty},s)=f_{fte}(g_{f},s)f_{\ell,\infty}(g_{\infty},s)$ 
of $\rmm{Ind}_{\bff{P}(\A)}^{\bff{G}(\A)}|\nu|_{\A_{E}}^{s}$ as follows:
\begin{itemize}
    \item The archimedean part $f_{\ell,\infty}$ is $\bff{V}_{\ell}$-valued, and given by the formula:
    \[f_{\ell,\infty}((z,h)k,s)=\pi^{-2\ell-1}|z|^{2s}\left([u_{1}^{\ell}][u_{2}^{\ell}]\right)\cdot k,\text{ for any }(z,h)\in \bff{M}(\R),\,k\in K_{\infty}.\]
    \item The finite part $f_{fte}$ is defined by 
    $f_{fte}(g_{f},s)=\int_{\GL_{1}(\A_{E,f})}|t|_{\A_{E,f}}^{s}\Phi_{f}(t\cdot b_{1}g_{f})dt$.
\end{itemize}
We set the (Heisenberg) Eisenstein series 
\[E_{\ell}(g,s)=\sum_{\gamma\in \bff{P}(\Q)\backslash \bff{G}(\Q)}f_{\ell}(\gamma g,s).\]
Since the modulus character is $\delta_{\bff{P}}(t,m)=|t|_{\Bbb A_E}^{n+1}$ for $(t,m)\in \bff M(\Bbb A_E)$, 
by Godement's criterion on the convergence of Eisenstein series \cite[Lemma 11.1]{Bo}, 
$E_{\ell}(g,s)$ converges absolutely for $Re(s)>n+1$.
Moreover, when $s=\ell+1$ lies in the convergence range, we have:
\begin{prop}
    \label{prop at some point Eisenstein series is modular form}
    For $\ell> n$,
    the Eisenstein series $E_{\ell}(g,s=\ell+1)$ is a weight $\ell$ quaternionic modular form on $\bff{G}={\rm U}(2,n)$.
\end{prop}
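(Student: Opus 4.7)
The plan is to verify each of the three defining conditions of a weight $\ell$ quaternionic modular form from \Cref{def quaternionic modular form on unitary groups}, with the Schmid equations being the genuine content of the statement.

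\emph{Convergence, moderate growth, and $\bff{G}(\Q)$-invariance.} Since $\ell>n$, we have $s=\ell+1>n+1$, which lies in the range guaranteed by Godement's criterion recalled just above the proposition. Hence the defining series $E_{\ell}(g,s=\ell+1)=\sum_{\gamma\in\bff{P}(\Q)\backslash\bff{G}(\Q)}f_{\ell}(\gamma g,\ell+1)$ converges absolutely and uniformly on compact subsets of $\bff{G}(\A)$, and the standard argument (dominating the sum by a convergent Eisenstein series on a maximal compact minus the singular set) yields moderate growth. Left $\bff{G}(\Q)$-invariance is built into the definition.

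\emph{Right $K_{\infty}$-equivariance with values in $\mbb{V}_{\ell}$.} By construction the archimedean section $f_{\ell,\infty}$ satisfies $f_{\ell,\infty}(gk,\ell+1)=f_{\ell,\infty}(g,\ell+1)\cdot k$ for $k\in K_{\infty}$, with values in $\mbb{V}_{\ell}$, and $f_{fte}$ is a scalar-valued function independent of $g_{\infty}$. Thus each summand $f_{\ell}(\gamma g,\ell+1)$ satisfies the desired equivariance in the variable $g_{\infty}$, and this property descends to the sum.

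\emph{Annihilation by the Schmid operators $\call{D}_{\ell}^{\pm}$.} This is the main point. The operators $\call{D}_{\ell}^{\pm}$ constructed in \cite[\S 3.1]{HMY} are left-invariant differential operators on $\bff{G}(\R)$ whose kernel, when restricted to smooth $\mbb{V}_{\ell}$-valued functions that are right $K_{\infty}$-equivariant, cuts out precisely the automorphic realization of the quaternionic discrete series $\Pi_{\ell}$ of Gross--Wallach. By Wallach's theorem \cite[Theorem 9]{Wallach} cited in the text, for $\ell\geq n$ one has an embedding $\Pi_{\ell}\hookrightarrow\rmm{Ind}_{\bff{P}(\R)}^{\bff{G}(\R)}|\nu|_{\C}^{\ell+1}$, and the minimal $K_{\infty}$-type $\mbb{V}_{\ell}\subset\Pi_{\ell}$ is realized on the induced side by a specific highest-weight line; the section $f_{\ell,\infty}(\cdot,\ell+1)$ is designed so that the distinguished vector $[u_{1}^{\ell}][u_{2}^{\ell}]\in\mbb{V}_{\ell}$ corresponds to this highest-weight line. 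Consequently $\call{D}_{\ell}^{\pm}f_{\ell,\infty}(\cdot,\ell+1)\equiv 0$ on $\bff{G}(\R)$. Because $\call{D}_{\ell}^{\pm}$ is left-invariant and the Eisenstein series converges uniformly on compacta together with all its derivatives, we may apply $\call{D}_{\ell}^{\pm}$ term by term:
\[\call{D}_{\ell}^{\pm}\bracket{E_{\ell}(\cdot,\ell+1)|_{\bff{G}(\R)}}(g_{\infty})=\sum_{\gamma}f_{fte}(\gamma_{f},\ell+1)\cdot \call{D}_{\ell}^{\pm}f_{\ell,\infty}(\gamma_{\infty}g_{\infty},\ell+1)=0.\]

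\emph{Main obstacle.} The only non-formal step is identifying $f_{\ell,\infty}(\cdot,\ell+1)$, with the explicit value $\pi^{-2\ell-1}|z|^{2(\ell+1)}[u_{1}^{\ell}][u_{2}^{\ell}]\cdot k$ on the Iwasawa decomposition, with the highest-weight vector of the minimal $K_{\infty}$-type of $\Pi_{\ell}$ inside the degenerate principal series at $s=\ell+1$. In practice this is checked by computing the action of the complexified Lie algebra $\mathfrak{g}_{\C}$ on $f_{\ell,\infty}$ modulo the lower-step filtration at the identity, and comparing with the formulas for $\call{D}_{\ell}^{\pm}$ in \cite{HMY}; the specific exponent $\ell+1$ is exactly the value at which the branching matches, reproducing the Gross--Wallach embedding and forcing the Schmid equations to hold.
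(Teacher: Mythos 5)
Your overall structure matches the paper's: the formal conditions (convergence for $s=\ell+1>n+1$ by Godement, left $\bff{G}(\Q)$-invariance, right $K_{\infty}$-equivariance, and term-by-term application of the left-invariant operators $\call{D}_{\ell}^{\pm}$) are treated the same way, and both proofs reduce everything to showing $\call{D}_{\ell}^{\pm}f_{\ell,\infty}(\cdot,\ell+1)=0$. Where you diverge is in how that last fact is established. The paper does it by a one-line direct computation: writing $m=(h,r,\theta)\in\bff{M}(\R)$ with $\nu(m)=re^{i\theta}$, the restriction of $f_{\ell,\infty}$ to $\bff{M}(\R)$ is $r^{2s}[u_{1}^{\ell}][u_{2}^{\ell}]$, and \cite[Proposition 3.10]{HMY} says this is annihilated by $\call{D}_{\ell}^{\pm}$ if and only if $(r\partial_{r}-2(\ell+1))r^{2s}=0$, i.e.\ $s=\ell+1$. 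You instead invoke Wallach's embedding $\Pi_{\ell}\hookrightarrow\rmm{Ind}_{\bff{P}(\R)}^{\bff{G}(\R)}|\nu|_{\C}^{\ell+1}$ and argue that $f_{\ell,\infty}$ realizes the minimal $K_{\infty}$-type of $\Pi_{\ell}$. That route is legitimate, but as written it has a small hole: Wallach's theorem alone does not tell you that \emph{your particular} $\mbb{V}_{\ell}$-isotypic section is the one sitting inside the image of $\Pi_{\ell}$. To close it abstractly you should note that by Frobenius reciprocity $\dim\Hom_{K_{\infty}}(\mbb{V}_{\ell},\rmm{Ind}_{\bff{P}(\R)}^{\bff{G}(\R)}|\nu|_{\C}^{\ell+1})=\dim\mbb{V}_{\ell}^{K_{\infty}\cap\bff{M}(\R)}=1$ (the invariants are spanned by $[u_{1}^{\ell}][u_{2}^{\ell}]$), so the $\mbb{V}_{\ell}$-isotypic part is one copy and must coincide with the minimal $K$-type of the embedded $\Pi_{\ell}$, which the Schmid operators annihilate by construction. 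Your final paragraph instead defers to "computing the Lie algebra action and comparing with the formulas in \cite{HMY}," which is essentially the paper's direct check restated rather than carried out. The paper's argument is more elementary and pins down exactly why the specific value $s=\ell+1$ is forced; your argument, once the multiplicity-one point is added, explains the same fact more structurally via the Gross--Wallach/Wallach picture.
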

\begin{proof}
    The only condition in \Cref{def quaternionic modular form on unitary groups} that is not obvious is the third one.
    It suffices to show that the restriction of $f_{\ell,\infty}$ to $\bff{M}(\R)$ is killed by the Schmid operators $\call{D}_{\ell}^{\pm}$.

    Write an element $(z,h)\in\bff{M}(\R)$ as $m=(h,r,\theta)$ so that $z=re^{i\theta},\,r\in\R_{>0}$ and $\theta\in [0,2\pi)$.
    Using this coordinates,
    we define a function $F\left(h,r,\theta\right)=r^{2s}$ on $\bff{M}(\R)$
    so that $f_{\ell,\infty}(m)=F(m)[u_{1}^{\ell}][u_{2}^{\ell}]$.
    By \cite[Proposition 3.10]{HMY},
    $f_{\ell,\infty}$ is killed by $\call{D}_{\ell}^{\pm}$ if and only if 
    $(r\partial r-2(\ell+1))F\equiv 0$ on $\bff{M}(\R)$,
    which holds when $s=\ell+1$.
\end{proof}
\subsection{Abstract Fourier expansion}
\label{section abstract Fourier expansion}
Now we give an ``abstract'' Fourier expansion of $E_{\ell}(g,s)$.
\begin{lemma}\label{lemma double coset of parabolic of unitary groups}
    The right $\bff{P}(\Q)$-space $\bff{P}(\Q)\backslash \bff{G}(\Q)$,
    the space of isotropic lines in $\bff{V}$,
    has exactly $3$ orbits of $\bff{P}(\Q)$,
    represented respectively by $E b_{1}$, $E c_{1}$ and $E b_{2}$.
\end{lemma}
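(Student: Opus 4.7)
The strategy is in two stages: first identify $\bff{P}(\Q)\backslash \bff{G}(\Q)$ with the set of isotropic lines of $\bff{V}$ via the orbit map $\bff{P}(\Q)g \mapsto Eb_{1}\cdot g$, and then classify the $\bff{P}(\Q)$-orbits on this set.

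Since $\bff{P}$ is by construction the stabilizer of the isotropic line $Eb_{1}$, this orbit map is injective. Surjectivity amounts to transitivity of $\bff{G}(\Q)$ on isotropic lines, which follows from Witt's extension theorem applied to the non-degenerate Hermitian space $\bff{V}$ over the field $E$: any isometry between two isotropic lines (both carry the zero form) extends to an element of $\bff{G}(\Q)$.

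To classify the $\bff{P}(\Q)$-orbits, I would distinguish three cases for an isotropic line $L=Ev$ according to the scalar $\lrangle{v}{b_{1}}$ (whose vanishing depends only on $L$): (a) $L=Eb_{1}$; (b) $L\subset b_{1}^{\perp}$ but $L\neq Eb_{1}$; (c) $\lrangle{L}{b_{1}}\neq 0$. Case (a) is a $\bff{P}(\Q)$-fixed singleton. For case (c), after rescaling one may take $\lrangle{v}{b_{1}}=1$, forcing $v=cb_{1}+b_{2}+v_{0}$ with $v_{0}\in \bff{V}_{0}$ and the isotropy condition $c+\bar c+\lrangle{v_{0}}{v_{0}}=0$. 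Comparing with the explicit formula $b_{2}\cdot n(v_{0},\lambda)=(-\frac{1}{2}\lrangle{v_{0}}{v_{0}}+\lambda)b_{1}+b_{2}+v_{0}$ recalled in \Cref{section unitary group and its Heisenberg parabolic}, one finds a unique $n\in\bff{N}(\Q)$ with $v=b_{2}\cdot n$, so $L$ lies in the $\bff{N}(\Q)$-orbit of $Eb_{2}$.

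For case (b), I would write $v=cb_{1}+v_{0}$ with $v_{0}\in \bff{V}_{0}\setminus\{0\}$ isotropic. The action $v_{0}\mapsto v_{0}-\lrangle{v_{0}}{v'}b_{1}$ of $n(v',\lambda)$ on vectors of $\bff{V}_{0}$, together with surjectivity of $\lrangle{v_{0}}{\cdot}:\bff{V}_{0}\rightarrow E$ (guaranteed by non-degeneracy and $v_{0}\neq 0$), lets me shift the $b_{1}$-coefficient to any element of $E$, so I may arrange $v=v_{0}\in\bff{V}_{0}$. A second application of Witt's extension theorem, now to the Hermitian space $\bff{V}_{0}$, yields an element of $\bff{U}(\bff{V}_{0})(\Q)\subset \bff{M}(\Q)$ carrying $v_{0}$ to $c_{1}$; hence $L$ lies in the $\bff{P}(\Q)$-orbit of $Ec_{1}$. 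The three representatives are pairwise inequivalent because the invariants ``$L=Eb_{1}$'' and ``$L\subset b_{1}^{\perp}$'' are $\bff{P}(\Q)$-stable. I do not foresee a real obstacle; the proof is essentially a bookkeeping exercise assembling the $\bff{N}$- and $\bff{M}$-action formulas recalled in \Cref{section unitary group and its Heisenberg parabolic} with two applications of Witt's extension theorem.
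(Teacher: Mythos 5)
Your proof is correct and is precisely the "direct verification from the explicit action" that the paper's one-line proof invokes: the identification of $\bff{P}(\Q)\backslash\bff{G}(\Q)$ with isotropic lines via Witt's theorem, the trichotomy according to $\lrangle{v}{b_{1}}$, and the reduction of cases (b) and (c) using the $\bff{N}(\Q)$- and $\bff{M}(\Q)$-action formulas all check out, including the matching of the isotropy condition $c+\bar c+\lrangle{v_{0}}{v_{0}}=0$ with the $b_{1}$-coefficient of $b_{2}\cdot n(v_{0},\lambda)$. No gaps; this is the same argument, written out in full.
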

\begin{proof}
    It can be shown directly by the explicit action given in \Cref{section unitary group and its Heisenberg parabolic}.
\end{proof}
Set $\bff{G}(\Q)=\bigsqcup\limits_{i=0}^{2}\bff{P}(\Q)w_{i} \bff{P}(\Q)$,
such that $w_{0}=1$,
$b_{1}\cdot w_{1}=c_{1}$ and $b_{1}\cdot w_{2}=b_{2}$,
and write the Heisenberg Eisenstein series as 
\[E_{\ell}(g,s)=\sum_{i=0}^{2}E_{\ell,i}(g,s),\text{ where }E_{\ell,i}(g,s)=\sum_{\gamma\in \bff{P}(\Q)\backslash \bff{P}(\Q)w_{i}\bff{P}(\Q)}f_{\ell}(\gamma g,s).\]
\begin{lemma}
    \label{lemma rank decomposition of Eisenstein series}
    For $\rmm{Re}(s)>n+1$, 
    one has the following expressions for $E_{\ell,i}(g,s),\,i=0,1,2$:
    \begin{enumerate}
        \item $E_{\ell,0}(g,s)=f_{\ell}(g,s)$.
        \item Let $\call{L}_{0}$ be the set of non-zero isotropic lines in $\bff{V}_{0}$.
        For any $L\in\call{L}_{0}$, select an element $\gamma(L)\in \bff{G}(\Q)$ such that $b_{1}\cdot \gamma(L)\in L$.
        Then \[E_{\ell,1}(g,s)=\sum_{L\in\call{L}_{0}}\sum_{\mu\in L^{\perp}\bff{Z}(\Q)\backslash\bff{N}(\Q)}f_{\ell}(\gamma(L)\mu g,s),\]
        where we identify the group $\set{n(v,0)\in\bff{N}(\Q)}{v\in L^{\perp}}$ with the subspace $L^{\perp}$ of $\bff{V}_{0}$. 
        \item One has 
        \[E_{\ell,2}(g,s)=\sum_{\mu\in \bff{N}(\Q)}f_{\ell}(w_{2}\mu g,s).\]
    \end{enumerate}
\end{lemma}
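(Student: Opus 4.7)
The plan is to exploit the bijection between $\bff{P}(\Q)\backslash\bff{G}(\Q)$ and the set of isotropic lines of $\bff{V}$ given by $\gamma\mapsto Eb_{1}\cdot\gamma$; this is a bijection because the identity $b_{1}\cdot p = \nu(p)^{-1}b_{1}$ for $p\in\bff{P}$ shows $\bff{P}$ to be the stabilizer of $Eb_{1}$. Under this identification the coset space $\bff{P}(\Q)\backslash\bff{P}(\Q)w_{i}\bff{P}(\Q)$ corresponds to the right $\bff{P}(\Q)$-orbit of $Eb_{1}w_{i}$, and these three orbits are exactly the ones enumerated in \Cref{lemma double coset of parabolic of unitary groups}. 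Because the Eisenstein series converges absolutely when $\rmm{Re}(s)>n+1$, each partial sum $E_{\ell,i}$ can be re-indexed by the corresponding orbit; the task is then to describe each orbit explicitly and pick a natural set of coset representatives.

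Parts (1) and (3) are short. For (1) the orbit of $Eb_{1}$ is a single point, so the sum collapses to $f_{\ell}(g,s)$. For (3), the formula $b_{2}\cdot n(v,\lambda)=(-\tfrac{1}{2}\lrangle{v}{v}+\lambda)b_{1}+b_{2}+v$ from \Cref{section unitary group and its Heisenberg parabolic} shows that $(v,\lambda)$ is determined uniquely by the $b_{1}$- and $\bff{V}_{0}$-components of the image vector; hence $\bff{N}(\Q)$ acts freely on $Eb_{2}$, while $\bff{M}(\Q)$ preserves this line. Therefore $\rmm{Stab}_{\bff{P}(\Q)}(Eb_{2})=\bff{M}(\Q)$, giving $\bff{P}(\Q)\backslash\bff{P}(\Q)w_{2}\bff{P}(\Q)\cong\bff{M}(\Q)\backslash\bff{P}(\Q)\cong\bff{N}(\Q)$, which yields formula (3).

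Part (2) is the main case. I first observe that the $\bff{P}(\Q)$-orbit of $Ec_{1}$ consists exactly of the isotropic lines $E(\alpha b_{1}+w)$ with $0\neq w$ isotropic in $\bff{V}_{0}$ and $\alpha\in E$. For each $L\in\call{L}_{0}$, put $w_{L}:=b_{1}\gamma(L)\in L\setminus\{0\}$; the identity $w_{L}\cdot n(v,\lambda)=-\lrangle{w_{L}}{v}b_{1}+w_{L}$ shows that as $\mu=n(v,\lambda)$ ranges over $\bff{N}(\Q)$, the lines $Eb_{1}\gamma(L)\mu$ cover all $E(\alpha b_{1}+w_{L})$, since $v\mapsto\lrangle{w_{L}}{v}$ is surjective onto $E$. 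Two $\mu,\mu'$ produce the same line if and only if $v-v'\in L^{\perp}$, with $\lambda,\lambda'$ unconstrained, so after identifying $\{n(v,0):v\in L^{\perp}\}$ with $L^{\perp}$ the stabilizer of $Ew_{L}$ in $\bff{N}(\Q)$ is exactly $L^{\perp}\bff{Z}(\Q)$. Summing over $L\in\call{L}_{0}$ exhausts the orbit without overcounting because the image of $Eb_{1}\gamma(L)\mu$ in $\bff{V}/Eb_{1}$ recovers $L$ unambiguously.

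The main bookkeeping subtlety I anticipate is verifying that the expression in (2) is independent of the choice of $\gamma(L)$. Any two such choices differ by left multiplication by an element $p\in\bff{P}(\Q)$, and the product formula $|\nu(p)|_{\A_{E}}=1$ for $p\in\bff{P}(\Q)$ yields $f_{\ell}(p\gamma(L)\mu g,s)=f_{\ell}(\gamma(L)\mu g,s)$, so the partial sum is well-defined; this is the only place where the global structure of $\bff{P}(\Q)$, as opposed to a local argument, enters the proof.
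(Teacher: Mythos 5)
Your proposal is correct and follows essentially the same route as the paper: identify $\bff{P}(\Q)$-cosets with isotropic lines via $\gamma\mapsto Eb_{1}\gamma$, and compute the relevant stabilizers from the explicit action of $\bff{N}$ given in \Cref{section unitary group and its Heisenberg parabolic} (in particular $\rmm{Stab}_{\bff{N}(\Q)}(L)=L^{\perp}\bff{Z}(\Q)$ for part (2)). The paper simply declares (1) and (3) clear and records only the double-coset decomposition and stabilizer computation for (2); your extra verifications (triviality of $\rmm{Stab}_{\bff{N}(\Q)}(Eb_{2})$, and independence of the choice of $\gamma(L)$ via $|\nu(p)|_{\A_{E}}=1$ for $p\in\bff{P}(\Q)$) are accurate details of the same argument.
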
 
\begin{proof}
    The expressions for $E_{\ell,0}$ and $E_{\ell,2}$ are clear.
    By our choice of $w_{1}$,
    the set $\bff{P}(\Q)\backslash \bff{P}(\Q)w_{1}\bff{P}(\Q)$ decomposes as the disjoint union:
    \[\bigsqcup_{L\in\call{L}_{0}}\gamma(L)\left(\rmm{Stab}_{\bff{N}(\Q)}(L)\backslash \bff{N}(\Q)\right).\]
    For an isotropic line $L=Ew$ in $\bff{V}_{0}$,
    an element $n(v,\lambda)\in\bff{N}(\Q)$ maps $L$ to the line generated by $w-\lrangle{w}{v}b_{1}$,
    thus the stabilizer of $L$ in $\bff{N}(\Q)$ is $\set{n(v,\lambda)\in\bff{N}(\Q)}{v\in L^{\perp}}\simeq L^{\perp}\bff{Z}(\Q)$.
\end{proof}

For any vector $T\in \bff{V}_{0}$,
set 
\[E_{\ell}^{T}(g,s)=\int_{\bff{N}(F)\backslash\bff{N}(\A)}\chi_{T}^{-1}(n)E_{\ell}(ng,s)dn\]
and
\[E_{\ell,i}^{T}(g,s)=\int_{\bff{N}(F)\backslash\bff{N}(\A)}\chi_{T}^{-1}(n)E_{\ell,i}(ng,s)dn,\text{ for }i=0,1,2.\]
The following lemma shows that $E_{\ell,i}^{T}$ is Eulerian.
\begin{lemma}
    \label{lemma Eulerian Fourier coefficients}
    \begin{enumerate}
        \item For any isotropic line $L\in \call{L}_{0}$,
        define $\bff{N}_{L}:=L^{\perp}\bff{Z}\subseteq \bff{N}$.
        For a non-zero isotropic vector $T\in \bff{V}_{0}$, 
        set $\bff{N}_{T}:=\bff{N}_{L_{T}}$,
        where $L_{T}$ is the line generated by $T$.
        We have:
        \begin{itemize}
            \item If $T\in\bff{V}_{0}$ is anisotropic, then $E_{\ell,1}^{T}$ is identically zero. 
            \item If $T$ is non-zero and isotropic,
            then 
            \[E_{\ell,1}^{T}(g,s)=\int_{\bff{N}_{T}(\A)\backslash \bff{N}(\A)}\chi_{T}^{-1}(n)f_{\ell}(\gamma(L_{T})ng,s)dn.\]
            \item If $T=0$,
            then 
            \[E_{\ell,1}^{0}(g,s)=\sum_{L\in \call{L}_{0}}\int_{\bff{N}_{L}(\A)\backslash \bff{N}(\A)}f_{\ell}(\gamma(L)ng,s)dn.\]
        \end{itemize}
        \item For any $T\in \bff{V}_{0}$, one has 
        \[E_{\ell,2}^{T}(g,s)=\int_{\bff{N}(\A)}\chi_{T}^{-1}(n)f_{\ell}(w_{2}ng,s)dn.\]
    \end{enumerate}
\end{lemma}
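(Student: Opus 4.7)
Both parts follow from the classical unfolding technique, combining the sums in \Cref{lemma rank decomposition of Eisenstein series} with the Fourier integral against $\chi_T^{-1}$ over $[\bff{N}] = \bff{N}(\Q)\backslash\bff{N}(\A)$. Absolute convergence for $\mathrm{Re}(s) > n+1$, together with the fact that $\chi_T$ is trivial on $\bff{N}(\Q)$, justifies all interchanges. For part (2), the sum $E_{\ell,2}(g,s) = \sum_{\mu\in\bff{N}(\Q)} f_\ell(w_2\mu g, s)$ is already indexed by all of $\bff{N}(\Q)$, so pairing with $\chi_T^{-1}$ and integrating over $[\bff{N}]$ absorbs the quotient and leaves a single integral over $\bff{N}(\A)$, which is exactly the claimed formula.

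For part (1), the analogous unfolding applied to the inner sum over $\bff{N}_L(\Q)\backslash\bff{N}(\Q)$ yields
\begin{equation*}
E_{\ell,1}^T(g,s) = \sum_{L\in\call{L}_0} \int_{\bff{N}_L(\Q)\backslash\bff{N}(\A)} \chi_T^{-1}(n)\, f_\ell\!\left(\gamma(L)ng, s\right) dn.
\end{equation*}
I would then factor each term through the intermediate quotient $\bff{N}_L(\A)\backslash\bff{N}(\A)$. The key point is that $\gamma(L)\bff{N}_L\gamma(L)^{-1}\subseteq\bff{P}$: by construction $\gamma(L)$ sends $Eb_1$ to $L$, and by definition $\bff{N}_L = L^{\perp}\bff{Z}$ is precisely the stabilizer of $L$ in $\bff{N}$ (as shown in the proof of \Cref{lemma rank decomposition of Eisenstein series}), so its conjugate by $\gamma(L)$ stabilizes $Eb_1$ and lands inside the unipotent radical of $\bff{P}$, on which $|\nu|^s\equiv 1$. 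By left $|\nu|^s$-equivariance of $f_\ell$, the map $n\mapsto f_\ell(\gamma(L)ng,s)$ is therefore left $\bff{N}_L(\A)$-invariant, and the integral factors as
\begin{equation*}
\left(\int_{\bff{N}_L(\Q)\backslash\bff{N}_L(\A)} \chi_T^{-1}(m)\, dm\right)\cdot \int_{\bff{N}_L(\A)\backslash\bff{N}(\A)} \chi_T^{-1}(n)\, f_\ell\!\left(\gamma(L)ng, s\right) dn.
\end{equation*}

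The inner character integral vanishes unless $\chi_T|_{\bff{N}_L(\A)}$ is trivial, in which case it equals the volume of $\bff{N}_L(\Q)\backslash\bff{N}_L(\A)$, normalized to $1$. Under the identification $n(v,\lambda)\mapsto v$ of $\bff{N}_L/\bff{Z}$ with $L^\perp$, the restriction of $\chi_T$ becomes $v\mapsto \psi_E(\lrangle{T}{v})$, so triviality on $L^\perp(\A)$ amounts to $\lrangle{T}{v}=0$ for every $v\in L^\perp$, i.e.\ $T\in (L^\perp)^\perp = L$; here $(L^\perp)^\perp = L$ uses non-degeneracy of $\bff{V}_0$. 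Hence: an anisotropic $T$ lies on no isotropic line and $E_{\ell,1}^T\equiv 0$; a nonzero isotropic $T$ lies only on $L_T$, leaving the single term indexed by $L = L_T$ (so $\bff{N}_L = \bff{N}_T$); and $T=0$ lets every $L$ contribute, which is the third formula. The only genuine subtlety is the verification that $\gamma(L)\bff{N}_L\gamma(L)^{-1}\subseteq\bff{P}$, which is what makes the intermediate factorization legitimate; once this observation is in hand, the rest of the argument is purely formal Fourier-theoretic bookkeeping.
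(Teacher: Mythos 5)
Your argument is correct and follows essentially the same route as the paper: unfold the sum over $\bff{N}_L(\Q)\backslash\bff{N}(\Q)$ into an integral over $\bff{N}_L(\Q)\backslash\bff{N}(\A)$, factor through $\bff{N}_L(\A)\backslash\bff{N}(\A)$ using left $\bff{N}_L(\A)$-invariance of $n\mapsto f_\ell(\gamma(L)ng,s)$, and observe that the inner character integral vanishes unless $T\in(L^\perp)^\perp=L$. The only difference is that you spell out the invariance step (via $\gamma(L)\bff{N}_L\gamma(L)^{-1}$ fixing $b_1$, hence $\nu\equiv 1$ there), which the paper leaves implicit.
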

\begin{proof}
    It suffices to prove the $i=1$ case.
    For any vector $T\in \bff{V}_{0}$, we have:
    \begin{align*}
        E_{\ell,1}^{T}(g,s)=&\sum_{L\in\call{L}_{0}}\int_{[N]}\chi_{T}^{-1}(n)\left(\sum_{\mu\in\bff{N}_{L}(\Q)\backslash \bff{N}(\Q)}f_{\ell}(\gamma(L)\mu ng,s)\right)dn\\
        =&\sum_{L\in\call{L}_{0}}\int_{\bff{N}_{L}(\Q)\backslash\bff{N}(\A)}\chi_{T}^{-1}(n)f_{\ell}(\gamma(L)ng,s)dn\\
        =&\sum_{L\in\call{L}_{0}}\int_{\bff{N}_{L}(\A)\backslash\bff{N}(\A)}\left(\int_{[\bff{N}_{L}]}\chi_{T}^{-1}(r)dr\right)\chi_{T}^{-1}(n)f_{\ell}(\gamma(L)ng,s)dn\\
        =&\sum_{L\in\call{L}_{0},\,\chi_{T}|_{\bff{N}_{L}}\equiv 1}\int_{\bff{N}_{L}(\A)\backslash \bff{N}(\A)}\chi_{T}^{-1}(n)f_{\ell}(\gamma(L)ng,s)dn.
    \end{align*}
    The lemma then follows from the fact that $\chi_{T}|_{\bff{N}_{L}}\equiv 1$ if and only if $T\in L$, \emph{i.e.\,}$T=0$ or $T$ is a non-zero isotropic vector generating $L$. 
\end{proof}

\section{Rank 2 Fourier coefficients of \texorpdfstring{$E_{\ell}(g,s=\ell+1)$}{}}
\label{section rank 2 Fourier}
We first look at the most complicated part of the Fourier expansion of the Eisenstein series $E_{\ell}(g,s=\ell+1)$:
the rank $2$ part $E_{\ell}^{T}(g,s)$,
where $T$ is an anisotropic vector in $\bff{V}_{0}$.
By \Cref{lemma Eulerian Fourier coefficients},
we know that $E_{\ell}^{T}=E_{\ell,2}^{T}$.
In this section, we study $E_{\ell,2}^{T}$ for \emph{any} $T\in\bff{V}_{0}$.

For any $g\in\bff{G}(\A)$, $m=(z,h)\in \bff{M}(\A)$ and $T\in \bff{V}_{0}$,
we have 
\begin{align*}
    E_{\ell,2}^{T}(mg,s)=& \int_{\bff{N}(\A)}\chi_{T}^{-1}(n)f_{\ell}(w_{2}nmg,s)dn\\
    =&\int_{\bff{N}(\A)}\chi_{T}^{-1}(mnm^{-1})f_{\ell}(w_{2}(mnm^{-1})mg,s)d(mnm^{-1})\\
    =&\int_{\bff{N}(\A)}\chi_{zT\cdot h}^{-1}(n)|\nu(w_{2}mw_{2}^{-1})|_{\A_{E}}^{s}f_{\ell}(w_{2}ng,s)|\nu(m)|_{\A_{E}}^{n+1}dn\\
    =&|\nu(m)|_{\A_{E}}^{n+1-s}E_{\ell,2}^{zT\cdot h}(g,s).
\end{align*}
In particular,
when $s=\ell+1$ we have 
\begin{align}\label{eqn tranform of rank 2 Fourier}
    E_{\ell,2}^{T}(mg,s=\ell+1)=|\nu(m)|_{\A_{E}}^{n-\ell}E_{\ell,2}^{\widetilde{T}}(g,s=\ell+1),\,\widetilde{T}=zT\cdot h.    
\end{align}
So for the value of $E_{\ell,2}^{T}$ at $g\in\bff{M}(\A_{f})\times\bff{G}(\R)$, 
it suffices to study $E_{2}^{T}(s):=E_{\ell,2}^{T}(1,s)$,
and we decompose $E_{2}^{T}(s)$ as $\prod_{v}E_{2,v}^{T}(s)$,
where 
\[E_{2,v}^{T}(s):=\int_{\bff{N}(\Q_{v})}\chi_{T}^{-1}(n)f_{\ell,v}(w_{2}n,s)dn.\]

\subsection{Rank 2 Fourier coefficients: non-archimedean components}
\label{section rank 2 Fourier coefficients finite part}
For any finite prime $p$, 
using the explicit action of $\bff{P}(\Q_{p})$ on $\bff{V}(\Q_{p})$ given in \Cref{section unitary group and its Heisenberg parabolic},
we obtain that:
\begin{align*}
    E_{2,p}^{T}(s)&=\int_{\bff{N}(\Q_{p})}\chi_{T}^{-1}(n)f_{p}(w_{2}n,s)dn\\
    &=\int_{\bff{N}(\Q_{p})}\int_{t\in E_{p}^{\times}}\chi_{T}^{-1}(n)|t|_{E_{p}}^{s}\Phi_{p}(t\cdot b_{1}w_{2}n)dtdn\\
    &=\int_{v\in \bff{V}_{0}(\Q_{p})}\int_{\substack{x\in E_{p}\\ \overline{x}=-x}}\int_{t\in E_{p}^{\times}}\chi_{T}^{-1}(v)|t|_{E_{p}}^{s}\Phi_{p}\left(t\left(\left(-\frac{\lrangle{v}{v}}{2}+x\right)b_{1}+b_{2}+v\right)\right)dtdxdv.
\end{align*}
Let $\oo_{E_{p}}$ be $\Z_{p}\otimes_{\Z}\oo_{E}\subseteq E_{p}$.
By our choice,
$\Phi_{p}$ is the characteristic function of the lattice 
\[\oo_{E_{p}}b_{1}\oplus\oo_{E_{p}}b_{2}\oplus\bff{V}_{0}(\Z_{p}),\]
thus the function $\Phi_{p}$ in $E_{2,p}^{T}(s)$ equals $1$ 
if and only if 
\[t\in \oo_{E_{p}},\,v\in t^{-1}\bff{V}_{0}(\Z_{p}),\text{and }-\frac{\lrangle{v}{v}}{2}+x\in t^{-1}\oo_{E_{p}}.\]

\begin{lemma}\label{lemma finite rank 2 coefficients}
    \begin{enumerate}
        \item If $p$ splits in $E$,
        then 
        \[E_{2,p}^{T}(s)=\sum_{r_{1},r_{2}\geq 0}p^{-(r_{1}+r_{2})s+\min(r_{1},r_{2})}\left(\int_{v\in (p^{-r_{1}},\,p^{-r_{2}})\bff{V}_{0}(\Z_{p})}\chi_{T}^{-1}(v)\rmm{Char}(p^{\rmm{max}(r_{1},r_{2})}\lrangle{v}{v}\in \Z_{p})dv\right).\]
        \item If $p$ is inert in $E$,
        then
        \[E_{2,p}^{T}(s)=\sum_{r\geq 0}p^{-2rs+r}\left(\int_{v\in p^{-r}\bff{V}_{0}(\Z_{p})}\chi_{T}^{-1}(v)\rmm{Char}\left(p^{r}\lrangle{v}{v}\in \Z_{p}\right)dv\right).\]
        \item If an odd prime $p$ is ramified in $E$, then 
$$
    E_{2,p}^{T}(s)=\sum_{r\geq 0}p^{-rs+\lceil r/2 \rceil}\left(\int_{v\in \varpi^{-r}\bff{V}_{0}(\Z_{p})}\chi_{T}^{-1}(v)\rmm{Char}(p^{\lfloor r/2\rfloor}\lrangle{v}{v}\in\Z_{p})dv\right),
$$
where $\varpi$ is a uniformizer of $\oo_{E_{p}}$ satisfying $\tr(\varpi)=0$ and $\varpi^{2}\in p\Z_{p}^{\times}$.
    \end{enumerate}
\end{lemma}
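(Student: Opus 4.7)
The plan is to compute the integral for $E_{2,p}^{T}(s)$ displayed just before the lemma, starting from the three support conditions $t \in \oo_{E_p}$, $v \in t^{-1}\bff{V}_{0}(\Z_p)$, and $-\frac{1}{2}\lrangle{v}{v} + x \in t^{-1}\oo_{E_p}$ cut out by $\Phi_p$. Once these conditions are imposed, the triple integral decouples along $t$, $x$, and $v$ in a tractable way, and the three stated formulas arise from performing the $t$- and $x$-integrations explicitly in each of the split, inert, and ramified cases.

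First I would stratify $E_p^{\times}$ by the valuation of $t$. In the split case, $E_p \simeq \Q_p \times \Q_p$, and I write $t = (t_1, t_2)$ with $v_p(t_i) = r_i \geq 0$; in the inert and ramified cases I write $t = \pi^{r} u$ with $u \in \oo_{E_p}^{\times}$ and uniformizer $\pi = p$ or $\pi = \varpi$. Using the multiplicative Haar measure on $E_p^{\times}$ normalized so that $\oo_{E_p}^{\times}$ has volume one, the unit integral on each shell contributes a factor of one since $\Phi_p$ is unit-invariant, while $|t|_{E_p}^{s}$ contributes $p^{-(r_1+r_2)s}$, $p^{-2rs}$, or $p^{-rs}$ respectively.

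Next I would carry out the $x$-integral for fixed valuation data. The subspace $\{x \in E_p : \overline{x} = -x\}$ is one-dimensional over $\Q_p$, spanned by $\sqrt{-D}$ in the split and inert cases and by $\varpi$ in the ramified case; I would pick the $\Q_p$-basis $\{1, \sqrt{-D}\}$ or $\{1, \varpi\}$ of $\oo_{E_p}$ adapted to this decomposition. The condition $-\frac{1}{2}\lrangle{v}{v} + x \in t^{-1}\oo_{E_p}$ then separates into a condition on the $1$-component, purely in $\lrangle{v}{v}$, which yields the indicator function stated in the lemma after discarding the factor $\frac{1}{2}$ using the odd-$p$ hypothesis, and a condition on the $\sqrt{-D}$- or $\varpi$-component restricting $x$ to a coset of a $\Q_p$-lattice whose volume supplies the prefactor $p^{\min(r_1,r_2)}$, $p^{r}$, or $p^{\lceil r/2 \rceil}$. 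Summing over the strata leaves precisely the $v$-integrals over $(p^{-r_1}, p^{-r_2})\bff{V}_{0}(\Z_p)$, $p^{-r}\bff{V}_{0}(\Z_p)$, and $\varpi^{-r}\bff{V}_{0}(\Z_p)$ appearing in the statement.

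The main technical obstacle is the ramified case. Because $\varpi^{2} \in p\Z_p^{\times}$, a short calculation shows that in the basis $\{1, \varpi\}$ one has $\varpi^{-r}\oo_{E_p} = p^{-\lfloor r/2 \rfloor}\Z_p \oplus p^{-\lceil r/2 \rceil}\varpi\Z_p$, and tracking that $-\frac{1}{2}\lrangle{v}{v}$ lies in the first component while $y\varpi$ lies in the second produces the asymmetric appearance of $\lfloor r/2 \rfloor$ in the indicator and $\lceil r/2 \rceil$ in the prefactor exponent. This parity splitting is also exactly the step that would require extra work for $p = 2$, since then the $\frac{1}{2}$ in $-\frac{1}{2}\lrangle{v}{v}$ would no longer be harmless and the clean separation described above would fail, consistent with the odd-$p$ hypothesis stated in the lemma.
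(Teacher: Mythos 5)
Your overall strategy is the same as the paper's: stratify $t$ by valuation, integrate out the trace-zero variable $x$ to produce the indicator condition on $\lrangle{v}{v}$ together with a volume factor, and your treatment of the ramified case (the decomposition $\varpi^{-r}\oo_{E_{p}}=p^{-\lfloor r/2\rfloor}\Z_{p}\oplus p^{-\lceil r/2\rceil}\varpi\Z_{p}$) is exactly what the paper does. For odd $p$ all three of your computations go through.

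The genuine gap is at $p=2$. Only part (3) of the lemma carries an odd-$p$ hypothesis; parts (1) and (2) must cover $p=2$, which under the standing assumption $D\equiv 3\pmod 4$ is unramified and hence split or inert, and these values feed into the Euler products later in the paper. Your argument breaks there in two ways. First, $\{1,\sqrt{-D}\}$ is \emph{not} a $\Z_{2}$-basis of $\oo_{E_{2}}$ (since $-D\equiv 1\pmod 4$, the ring of integers is $\Z_{2}[\tfrac{1+\sqrt{-D}}{2}]$, and $\Z_{2}[\sqrt{-D}]$ has index $2$ in it), so the adapted decomposition you rely on does not exist integrally at $2$. Second, you cannot ``discard the factor $\tfrac12$'': in the split case the correct route is to intersect the two cosets $\tfrac12\lrangle{v}{v}+p^{-r_{1}}\Z_{p}$ and $-\tfrac12\lrangle{v}{v}+p^{-r_{2}}\Z_{p}$, where the halves cancel upon subtraction and the answer is uniform in $p$ (this is also why your ``componentwise separation'' description, which is not literally valid when $r_{1}\neq r_{2}$, should be replaced by this coset-intersection computation); in the inert case at $p=2$ the paper switches to the trace-zero generator $1-2u$ with $u^{2}-u+1=0$ and checks directly that the solution set is the coset $\tfrac{\lrangle{v}{v}}{2}+2^{-r}\Z_{2}$, still of volume $2^{r}$, under the condition $2^{r}\lrangle{v}{v}\in\Z_{2}$. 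You need to supply these $p=2$ arguments (or restrict your basis-dependent argument to odd $p$ and treat $p=2$ separately) for the lemma as stated to be proved.
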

\begin{proof}
    (1) If $p$ splits in $E$, then
we identify $E_{p}$ with $\Q_{p}\times\Q_{p}$, and rewrite $E_{2,p}^{T}(s)$ as:
\begin{align*}
   \sum_{r_{1}\geq 0,\,r_{2}\geq 0}p^{-(r_{1}+r_{2})s}\int_{v\in (p^{-r_{1}},\,p^{-r_{2}})\bff{V}_{0}(\Z_{p})}\chi_{T}^{-1}(v)\left(\int_{\substack{x\in (p^{-r_{1}},\,p^{-r_{2}})\oo_{E_{p}}+\lrangle{v}{v}/2\\ x+\overline{x}=0}}dx\right)dv.
\end{align*}
If we write $x=(y,-y)\in E_{p}$,
then $x\in (p^{-r_{1}},p^{-r_{2}})\oo_{E_{p}}+\lrangle{v}{v}/2$ is equivalent to 
\[y\in p^{-r_{1}}\Z_{p}+\lrangle{v}{v}/2\text{ and }-y\in p^{-r_{2}}\Z_{p}+\lrangle{v}{v}/2.\]
There are such $x$ only if $\lrangle{v}{v}\in p^{-\max(r_{1},r_{2})}\Z_{p}$.
Without loss of generality, assume that $r_{1}\geq r_{2}$.
Then $y=p^{-r_{2}}\widetilde{y}-\lrangle{v}{v}/2$ for some $\widetilde{y}\in \Z_{p}$.
Since the measure on the line $\{x=(y,-y)\in E_{p}\}$ is the normalized Haar measure $dy$ on $\Q_{p}$,
one has $dy=p^{r_{2}}d\widetilde{y}$,
and 
\[\int_{(y,-y)\in p^{-r_{1}}\Z_{p}\times p^{-r_{2}}\Z_{p}+\lrangle{v}{v}/2}dy=\int_{\widetilde{y}\in \Z_{p}}p^{r_{2}}d\widetilde{y}=p^{r_{2}}=p^{\min(r_{1},r_{2})}.\]
Hence,
\[E_{2,p}^{T}(s)=\sum_{r_{1},r_{2}\geq 0}p^{-(r_{1}+r_{2})s+\min(r_{1},r_{2})}\left(\int_{v\in (p^{-r_{1}},\,p^{-r_{2}})\bff{V}_{0}(\Z_{p})}\chi_{T}^{-1}(v)\rmm{Char}(p^{\rmm{max}(r_{1},r_{2})}\lrangle{v}{v}\in \Z_{p})dv\right).\]

(2) If $p$ is inert in $E$,
then $E_{p}$ is an unramified quadratic field extension of $\Q_{p}$, 
and 
\begin{align*}
    E_{2,p}^{T}(s)=\sum_{r\geq 0}p^{-2rs}\int_{v\in p^{-r}\bff{V}_{0}(\Z_{p})}\chi_{T}^{-1}(v)\int_{x\in E_{p},\,\tr(x)=0}\rmm{Char}\left(p^{r}\left(-\frac{\lrangle{v}{v}}{2}+x\right)\in\oo_{E_{p}}\right)dxdv.
\end{align*}
When $p$ is odd, 
choose a unit $u\in \oo_{E_{p}}^{\times}$ with zero trace,
then $\{x\in E_{p} |\,\tr(x)=0\}$ is $\Q_{p}u$,
with the normalized Haar measure.
For any $v\in p^{-r}\bff{V}_{0}(\Z_{p})$ and $y\in \Q_{p}$, 
$p^{r}\left(-\frac{\lrangle{v}{v}}{2}+yu\right)\in\oo_{E_{p}}$
if and only if $\lrangle{v}{v}\in p^{-r}\Z_{p}$ and $y\in p^{-r}\Z_{p}$,
thus  
\begin{align*}
    E_{2,p}^{T}(s)&=\sum_{r\geq 0}p^{-2rs}\left(\int_{v\in p^{-r}\bff{V}_{0}(\Z_{p})}\chi_{T}^{-1}(v)\rmm{Char}(p^{r}\lrangle{v}{v}\in \Z_{p})dv\int_{y\in p^{-r}\Z_{p}}dy\right)\\
    &=\sum_{r\geq 0}p^{-2rs+r}\left(\int_{v\in p^{-r}\bff{V}_{0}(\Z_{p})}\chi_{T}^{-1}(v)\rmm{Char}(p^{r}\lrangle{v}{v}\in\Z_{p})dv\right).
\end{align*}
When $p=2$, choose a unit $u\in \oo_{E_{2}}^{\times}$ with $u^{2}-u+1=0$,
then $\set{x\in E_{2}}{\tr(x)=0}$ is $\Q_{2}(1-2u)$, with the normalized Haar measure.
For $v\in 2^{-r}\bff{V}_{0}(\Z_{p})$ and $y\in \Q_{2}$,
$2^{r}\left(-\frac{\lrangle{v}{v}}{2}+y(1-2u)\right)\in \oo_{E_{p}}$ 
if and only if $2^{r}\lrangle{v}{v}\in \Z_{2}$ and $y\in \frac{\lrangle{v}{v}}{2}+2^{-r}\Z_{2}\subseteq 2^{-r-1}\Z_{2}$,
thus
\begin{align*}
    E_{2,p=2}^{T}(s)=&\sum_{r\geq 0}2^{-2rs}\left(\int_{v\in 2^{-r}\bff{V}_{0}(\Z_{p})}\chi_{T}^{-1}(v)\rmm{Char}(2^{r}\lrangle{v}{v}\in \Z_{2})dv\int_{y\in \frac{\lrangle{v}{v}}{2}+2^{-r}\Z_{2}}dy\right)\\
    =&\sum_{r\geq 0}2^{-2rs+r}\left(\int_{v\in 2^{-r}\bff{V}_{0}(\Z_{p})}\chi_{T}^{-1}(v)\rmm{Char}(2^{r}\lrangle{v}{v}\in \Z_{2})dv\right).
\end{align*}

(3) If $p$ is ramified in $E$, then $E_{p}$ is a ramified quadratic field extension of $\Q_{p}$.
As $p\neq 2$, we choose a uniformizer $\varpi$ of $\oo_{E_{p}}$ such that $\varpi+\overline{\varpi}=0$ and $\varpi^{2}\in p\Z_{p}^{\times}$,
so $\set{x\in E_{p}}{\tr(x)=0}$ is $\Q_{p}\varpi$,
with the normalized Haar measure.
The integral for $E_{2,p}^{T}(s)$ can be rewritten as:
$$
E_{2,p}^{T}(s)=\sum_{r\geq 0}p^{-rs}\int_{v\in \varpi^{-r}\bff{V}_{0}(\Z_{p})}\chi_{T}^{-1}(v)\left(\int_{x\in\Q_{p}}\rmm{Char}\left(x\varpi-\frac{\lrangle{v}{v}}{2}\in \varpi^{-r}\oo_{E_{p}}\right)dx\right)dv.
$$
Suppose that $x\varpi-\frac{\lrangle{v}{v}}{2}=\varpi^{-r}(y+\varpi z)$ for $y,z\in \Bbb Z_p$. 
If $r$ is even, one has $x\varpi-\frac{\lrangle{v}{v}}{2}=(p^{-\frac{r}{2}}z)\varpi+p^{-\frac{r}{2}}y$, 
which implies that $\lrangle{v}{v}\in p^{-\frac{r}{2}}\Bbb Z_p$ and $x\in p^{-\frac{r}{2}}\Bbb Z_p$. 
If $r$ is odd, one has 
$x\varpi-\frac{\lrangle{v}{v}}{2}=(p^{-\frac{r+1}{2}}y)\varpi+p^{-\frac{r-1}{2}}z$, 
which implies that $\lrangle{v}{v}\in p^{-\frac{r-1}{2}}\Bbb Z_p$ and $x\in p^{-\frac{r+1}{2}}\Bbb Z_p$. 
Hence,
\[
E_{2,p}^T=\sum_{r\geq 0}p^{-rs}p^{\lceil r/2\rceil}\int_{v\in \varpi^{-r}\bff{V}_{0}(\Z_{p})}\chi_{T}^{-1}(v)\rmm{Char}(p^{\lfloor r/2\rfloor}\lrangle{v}{v}\in \Z_{p})\, dv.
\qedhere
\]
\end{proof}
\subsubsection{Siegel Series for quadratic spaces}
\label{section Siegel series quadratic}
To calculate the inner integrals in \Cref{lemma finite rank 2 coefficients},
we will reduce the problem to the orthogonal case. 
When $p$ is unramified in $E$, 
we use the following result by Kim and Yamauchi:
\begin{thm}\label{thm Siegel series orthogonal case}
    \cite[Theorem 4.2]{KY2}
    Let $\psi$ be an additive unitary character on $\Q_{p}$ with conductor $\Z_{p}$.
    Let $(L,q)$ be a split quadratic lattice over $\Z_{p}$ of rank $2m$,
    with
    \[q(x_{1},\ldots,x_{m},y_{1},\ldots,y_{m})=x_{1}y_{1}+\cdots+x_{m}y_{m},\]
    and denote the associated bilinear form by $(\,,\,)$.
    For any integer $r\geq 0$ and $\eta\in L\otimes_{\Z_{p}}\Q_{p}$ with $q(\eta)\neq 0$,
    set
    \[B_{r,\eta}:=\int_{x\in p^{-r}L}\psi\left((\eta,x)\right)\rmm{Char}(p^{r}q(x)\in \Z_{p})dx.\]
    When $\eta\notin L$, $B_{r,\eta}=0$ for any $r\geq 0$.
    When $\eta\in L$,
    if $p^{k}\| q(\eta)$,
    then
    \[\sum_{r=0}^{\infty}p^{-rs}B_{r,\eta}=(1-p^{m-1-s})P_{\eta,p}(p^{m-s}),\]
    where $P_{\eta,p}(X)\in\Z[X]$ is a monic polynomial of degree $k$ satisfying the functional equation
    \begin{align}\label{eqn functional eqn split case}
         X^{k}P_{\eta,p}(X^{-1})=P_{\eta,p}(X).
    \end{align}
\end{thm}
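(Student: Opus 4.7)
The plan is to reduce $B_{r,\eta}$ to a finite quadratic Gauss sum, factor it using the hyperbolic splitting of $(L,q)$, and assemble the result into the generating function $Z(s)=\sum_{r\geq 0}p^{-rs}B_{r,\eta}$.

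First I would exploit translation invariance. For any $l\in L$ one has $q(l)\in\Z_p$ and $(l,p^{-r}L)\subseteq p^{-r}\Z_p$, so $\rmm{Char}(p^rq(x+l)\in\Z_p)=\rmm{Char}(p^rq(x)\in\Z_p)$, while $\psi((\eta,x+l))=\psi((\eta,l))\psi((\eta,x))$. Hence the integrand is $L$-periodic up to the scalar $\psi((\eta,l))$, which collapses $B_{r,\eta}$ to the finite Gauss sum
$$B_{r,\eta}=\sum_{y\in L/p^rL}\psi(p^{-r}(\eta,y))\,\rmm{Char}(q(y)\in p^r\Z_p)$$
after the substitution $x=p^{-r}y$. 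If $\eta\notin L$, then self-duality of $L$ furnishes some $l\in L$ with $\psi((\eta,l))\neq 1$, forcing $B_{r,\eta}=0$.

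For $\eta\in L$, I would apply Fourier inversion to the constraint,
$$\rmm{Char}(q(y)\in p^r\Z_p)=\frac{1}{p^r}\sum_{c\in\Z_p/p^r}\psi(p^{-r}cq(y)),$$
so that $B_{r,\eta}=p^{-r}\sum_c\prod_{i=1}^{m}G_i(c)$ factors along the hyperbolic planes: writing $\eta=(\alpha_i,\beta_i)_{i=1}^{m}$ and $y=(a_i,b_i)_{i=1}^{m}$, one has
$$G_i(c)=\sum_{a,b\in\Z/p^r}\psi(p^{-r}(\alpha_ib+\beta_ia+cab)),$$
which I would evaluate by summing $a$ first (forcing $\beta_i+cb\equiv 0\pmod{p^r}$) and then $b$, producing an explicit answer depending only on $v_p(c)$, $v_p(\alpha_i)$, $v_p(\beta_i)$. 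Aggregating the outer sum by $j=v_p(c)$, the sum $\sum_{c:v_p(c)=j}$ reduces to a Ramanujan-type sum in the exponent $q(\eta)/c$ that takes the values $p^{r-j-1}(p-1)$, $-p^{r-j-1}$, or $0$ depending on whether $k=v_p(q(\eta))\geq r+j$, $k=r+j-1$, or $k<r+j-1$.

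Plugging the resulting closed formula for $B_{r,\eta}$ (which depends on $r$, $k$, and the divisibility $J=\max\{j:\eta\in p^jL\}$) into $Z(s)$ gives a finite sum of geometric series in $p^{-s}$, $p^{m-1-s}$, and $p^{m-s}$; after the obvious $(1-p^{-s})^{-1}$ poles cancel, a single factor of $(1-p^{m-1-s})^{-1}$ survives, and the remainder is visibly a polynomial in $X=p^{m-s}$ of degree $k$ with leading coefficient $1$, which we name $P_{\eta,p}(X)$. The palindromic identity $X^kP_{\eta,p}(X^{-1})=P_{\eta,p}(X)$ is a statement about the coefficients of $P_{\eta,p}$ that I would check by direct coefficient-matching on the closed formula; conceptually it reflects a local $s\leftrightarrow 2m-s$ symmetry of the Siegel series on a split hyperbolic space, which could alternatively be obtained via Poisson summation on the self-dual lattice $L$. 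The main technical obstacle is assembling $\prod_iG_i(c)$ when $\eta$ is not primitive in $L$, since the individual valuations $v_p(\alpha_i),v_p(\beta_i)$ can vary and must be tracked jointly against $v_p(c)$ to collapse to a clean formula depending only on $k$ and $J$.
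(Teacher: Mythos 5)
Your proposal is correct and follows essentially the same route the paper uses: this statement is quoted from \cite[Theorem 4.2]{KY2}, and the paper's own proof of its ramified analogue (\Cref{thm Siegel series ramified case}) proceeds exactly as you describe — collapse to a finite Gauss sum, Fourier-invert the congruence constraint, factor over hyperbolic planes and evaluate the two-variable sums (the paper cites \cite[Lemma 11.1]{KY2} for precisely your $G_i(c)$ computation), aggregate by $j=v_p(c)$ via Ramanujan sums, and assemble the geometric series, with the functional equation read off from the explicit polynomial. No substantive differences.
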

If $p$ is an odd prime ramified in $E$,
we need the following similar result:
\begin{thm}
    \label{thm Siegel series ramified case}
    Let $\psi$ be an additive unitary character on $\Q_{p}$ with conductor $\Z_{p}$.
    Let $(L,q)$ be a quadratic lattice over $\Z_{p}$ of rank $4m$,
    with 
    \[q(x_{1},\ldots,x_{2m},y_{1},\ldots,y_{2m})=\sum_{i=1}^{m}x_{i}y_{i}+p\sum_{i=m+1}^{2m}x_{i}y_{i},\]
    and denote the associated bilinear form by $(\,,\,)$.
    For any integer $r\geq 0$ and $\eta\in L\otimes_{\Z_{p}}\Q_{p}$ with $q(\eta)\neq 0$,
    set 
    \[C_{r,\eta}:=\int_{x\in p^{-r}L}\psi((\eta,x))\rmm{Char}(p^{r}q(x)\in \Z_{p})dx.\]

    For any $\eta=(a_{1},\ldots,a_{2m},b_{1},\ldots,b_{2m})\in L\otimes_{\Z_{p}}\Q_{p}$ with $q(\eta)\neq 0$,
    set 
    \[\eta_{1}=(a_{1},\ldots,a_{m},b_{1},\ldots,b_{m}),\,\eta_{2}=(a_{m+1},\ldots,a_{2m},b_{m+1},\ldots,b_{2m}),\]
    and 
    \[k_{1}:=\min(v_{p}(\eta_{1}),v_{p}(\eta_{2})),\,k_{2}:=\min(v_{p}(\eta_{1}),v_{p}(\eta_{2})+1),\,k:=v_{p}(q(\eta)),\,k^{\prime}:=k-k_{1}-k_{2}.\]
    The Siegel series $\sum_{r\geq 0}p^{-rs}C_{r,\eta}$ is non-zero only if $k_{2}\geq 0$.
    If $k_{2}=0$ and $k_{1}=-1$,
    then $\sum_{r\geq 0}p^{-rs}C_{r,\eta}=1$.
    If $k_{2}\geq k_{1}\geq 0$, \emph{i.e.\,}$\eta\in L$,
    then 
    \begin{align*}
        \sum_{r\geq 0}p^{-rs}C_{r,\eta}=(1-p^{2m-s-1})R_{\eta,p}(p^{2m-s})+\sum_{r=0}^{k_{2}}p^{r(4m-s-1)}-p^{3m-s-1}\sum_{r=0}^{k_{1}}p^{r(4m-s-1)},
    \end{align*}
    where 
    {\small \begin{align*}
        R_{\eta,p}(X)=p^{m}\left(\sum_{r=1}^{k_{2}}\frac{p^{r(2m-1)-1}}{p^{2m-1}-1}X^{r}+\frac{p^{(k_{1}+1)(2m-1)}-1}{p^{2m-1}-1}\sum_{r=k_{2}+1}^{k_{2}+k^{\prime}}X^{r}+\sum_{r=k_{2}+k^{\prime}+1}^{k}\frac{p^{(k-r+1)(2m-1)}-1}{p^{2m-1}-1}X^{r}\right)
    \end{align*}}
    is a polynomial in $\Z[X]$ with degree $k$ satisfying the functional equation
    \begin{align}\label{eqn functional equation ramified case}
        X^{k+1}R_{\eta,p}(X^{-1})=R_{\eta,p}(X).
    \end{align}
\end{thm}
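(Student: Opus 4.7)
The plan is to exploit the orthogonal splitting $L=L_1\oplus L_2$, where $L_1$ carries the standard unimodular split form $q_1(x)=\sum_{i=1}^m x_iy_i$ and $L_2$ carries the $p$-scaled split form $q|_{L_2}=p\,q_1'$, with $q_1'$ the standard split form on the underlying module of $L_2$. Decompose $\eta=\eta_1+\eta_2$ and $x=x_1+x_2$ accordingly, so that $q(x)=q_1(x_1)+p\,q_1'(x_2)$ and $(\eta,x)=B_1(\eta_1,x_1)+p\,B_2(\eta_2,x_2)$, where $B_1$ and $B_2$ denote the bilinear pairings attached to $q_1$ and $q_1'$.

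The first step is to linearize the indicator via the Fourier identity $\rmm{Char}(a\in\Z_p)=\int_{\Z_p}\psi(ya)\,dy$, which factors $C_{r,\eta}$ as $C_{r,\eta}=\int_{\Z_p}I_1(y)\,I_2(y)\,dy$, where
\[
I_1(y)=\int_{p^{-r}L_1}\psi\bigl(B_1(\eta_1,x_1)+yp^rq_1(x_1)\bigr)\,dx_1,
\]
\[
I_2(y)=\int_{p^{-r}L_2}\psi\bigl(pB_2(\eta_2,x_2)+yp^{r+1}q_1'(x_2)\bigr)\,dx_2.
\]
Each $I_i(y)$ is a product of $m$ two-variable hyperbolic Gauss integrals, which can be evaluated in closed form by standard case analysis on the $p$-adic valuations of the coordinates of $\eta_i$ and $y$. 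After performing the $y$-integral, one may alternatively recognize the result as the product of two Siegel series of the type treated in \Cref{thm Siegel series orthogonal case}, applied to $L_1$ and to the rank $2m$ unimodular split module underlying $L_2$ (after the change of variable $x_2=p^{-1}\widetilde{x}_2$), plus coupling corrections.

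Summing over $r\geq 0$ then yields a finite combination of geometric series in $p^{-s}$. The main contribution re-assembles into $(1-p^{2m-s-1})R_{\eta,p}(p^{2m-s})$, with the three coefficient ranges of the polynomial $R_{\eta,p}(X)$ corresponding to the three regimes determined by the comparison among $v_p(\eta_1)$, $v_p(\eta_2)$, and $v_p(\eta_2)+1$. The two additive finite sums $\sum_{r=0}^{k_2}p^{r(4m-s-1)}$ and $-p^{3m-s-1}\sum_{r=0}^{k_1}p^{r(4m-s-1)}$ arise as boundary terms from the low-$v_p(y)$ contribution to the integral, corresponding to the ``already in $L$'' and ``already in $L^*$'' regimes of $\eta$. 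The boundary case $k_2=0$, $k_1=-1$ is handled by a separate short computation in which only $r=0$ contributes. The functional equation $X^{k+1}R_{\eta,p}(X^{-1})=R_{\eta,p}(X)$ is then verified by direct inspection of the three coefficient ranges, which turn out to be mirror images under $r\leftrightarrow k+1-r$ via the identity $k=k_1+k_2+k'$.

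The main obstacle is the combinatorial bookkeeping after the Gauss-integral evaluation: carefully tracking how each regime of valuations of $\eta$ (with respect to both the unimodular piece $L_1$ and the scaled piece $L_2$) contributes, and sorting which terms re-assemble into the closed-form polynomial $R_{\eta,p}$ versus the two boundary sums. A secondary subtlety is that the scaled piece $L_2$ introduces shifts of $1$ in the exponents of $p$ relative to the unimodular case, which is precisely why the functional equation involves $X^{k+1}$ rather than $X^k$ and why the formula features both $p^{2m-s-1}$ and $p^{3m-s-1}$.
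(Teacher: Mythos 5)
Your plan is essentially the paper's own proof: the paper likewise linearizes the indicator $\rmm{Char}(p^{r}q(x)\in\Z_{p})$ by an additive character (in discrete exponential-sum form rather than as an integral over $\Z_{p}$, which is equivalent), factors the resulting sum into two-variable hyperbolic Gauss sums evaluated by \cite[Lemma 11.1]{KY2}, carries out the same case analysis on the valuation of the dual variable against $k_{1},k_{2},k^{\prime}$, resums over $r$ in the three regimes to assemble $R_{\eta,p}$ plus the two boundary sums, treats the case $k_{2}=0$, $k_{1}=-1$ separately with only $r=0$ contributing, and reads off the functional equation from the symmetry $r\leftrightarrow k+1-r$. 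The proposal is sound and takes the same route.
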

\begin{proof}
The proof is similar to that of \cite[Theorem 4.2]{KY2}.
We write $C_{r,\eta}$ as an exponential sum:
\begin{align*}
    C_{r,\eta}=\sum_{\substack{u_{1},\ldots,u_{2m},v_{1},\ldots,v_{2m}\in \Z/p^{r}\Z\\ q(u_{1},\ldots,u_{2m},v_{1},\ldots,v_{2m})\equiv 0 \modulo p^{r}}} \exp\left(\frac{2\pi i}{p^{r}}\left(\sum\limits_{i=1}^{m}(a_{i}v_{i}+b_{i}u_{i})+p\sum\limits_{i=m+1}^{2m}(a_{i}v_{i}+b_{i}u_{i})\right)\right)
\end{align*}
Using the identity 
\[\rmm{Char}\left(y\equiv 0 \modulo p^{r}\right)=p^{-r}\sum_{x\in \Z/p^{r}\Z}e^{\frac{2\pi i}{p^{r}}xy},\text{ for any }y\in \Z/p^{r}\Z,\]
we have:
\begin{align*}
    C_{r,\eta}&=p^{-r}\sum_{x\in \Z/p^{r}\Z}\sum_{u_{1},\ldots,u_{2m},v_{1},\ldots,v_{2m}\in \Z/p^{r}\Z} \exp\left( \frac{2\pi i }{p^{r}}\left(\sum\limits_{i=1}^{m}(a_{i}v_{i}+b_{i}u_{i}+x u_{i}v_{i})+p\sum\limits_{i=m+1}^{2m}(a_{i}v_{i}+b_{i}u_{i}+xu_{i}v_{i})\right)\right)\\
    &=p^{-r}\sum_{x\in \Z/p^{r}\Z}\prod_{i=1}^{m}\left(\sum_{u_{i},v_{i}\in \Z/p^{r}\Z}e^{\frac{2\pi i}{p^{r}}(xu_{i}v_{i}+a_{i}v_{i}+b_{i}u_{i})}\right)\prod_{i=m+1}^{2m}\left(\sum_{u_{i},v_{i}\in \Z/p^{r}\Z}e^{\frac{2\pi i}{p^{r-1}}(xu_{i}v_{i}+a_{i}v_{i}+b_{i}u_{i})}\right)\\
    &=:p^{-r}\sum_{x\in \Z/p^{r}\Z}C_{r,\eta,x}.
\end{align*}
Let $x=p^{j}x_{0}$ with $p\nmid x_{0}$ and $x_{0}^{-1}$ is taken modulo $p^{r-j}$.
By \cite[Lemma 11.1]{KY2},
{\Small \begin{align*}
    C_{r,\eta,x}=\left\{\begin{array}{ll}
        p^{4rm}, &\text{ if }j\geq r, a_{i}\equiv b_{i}\equiv 0 \modulo p^{r}, a_{i+m}\equiv b_{i+m}\equiv 0 \modulo p^{r-1}, 1\leq i\leq m\\
        p^{(2r+2j+1)m}e^{-\frac{2\pi i q(\eta)x_{0}^{-1}}{p^{r+j}}}, &\text{ if }0\leq j<r, a_{i}\equiv b_{i}\equiv 0\modulo p^{j}, 1\leq i\leq 2m\\
        0, &\text{ otherwise.}
    \end{array}\right.
\end{align*}}
From this formula,
we see that $C_{r,\eta}\neq 0$ only if $k_{2}\geq 0$.
When $\eta\notin L$ and $k_{2}\geq 0$,
one has $v_{p}(\eta_{2})=-1$ and $v_{p}(\eta_{1})\geq 0$.
In this case,
$C_{r,\eta}=0$ for any $r>0$ and $C_{0,\eta}=1$,
thus the Siegel series $\sum_{r\geq 0}p^{-rs}C_{r,\eta}=1$.
From now on, we assume that $\eta\in L$, and compute explicitly $C_{r,\eta}$.

If $r\leq k_{2}$,
then $v_{p}(q(\eta))\geq 2k_{2}-1\geq 2r-1$, thus
\begin{align*}
    C_{r,\eta}&=p^{-r}\left(p^{4rm}+\sum_{j=0}^{r-1}p^{(2r+2j+1)m}\phi(p^{r-j})\right)\\
    &=p^{r(4m-1)}+\sum_{j=0}^{r-1}\left(p^{(2r+2j+1)m-j}-p^{(2r+2j+1)m-j-1}\right).
\end{align*}

If $r>k_{2}$, then $C_{r,\eta,x}\neq 0$ only if $v_{p}(x)\leq k_{1}$, and 
\begin{align*}
    C_{r,\eta}=p^{-r}\sum_{j=0}^{k_{1}}p^{(2r+2j+1)m}\sum_{c\in (\Z/p^{r-j}\Z)^{\times}}e^{\frac{2\pi i c}{p^{r-j}}\cdot \frac{q(\eta)}{p^{2j}}},\text{ where }p^{2j}\mid q(\eta)\text{ for any }0\leq j\leq k_{1}. 
\end{align*}
We have 
\begin{align*}
    \sum_{c\in (\Z/p^{r-j}\Z)^{\times}}e^{\frac{2\pi i c}{p^{r-j}}\cdot \frac{q(\eta)}{p^{2j}}}=
    \sum_{\zeta\in\mu_{p^{r-j}}\text{ primitive}}\zeta^{p^{k-2j}}
    =\left\{\begin{array}{ll}
        p^{r-j}-p^{r-j-1}, & \text{ if }r+j\leq k \\
        -p^{r-j-1}, & \text{ if }r+j=k+1\\
        0, &\text{ if }r+j>k+1.
    \end{array}\right.
\end{align*}

If $k_{2}<r\leq k_{2}+k^{\prime}$,
then 
\begin{align*}
    C_{r,\eta}=p^{-r}\sum_{j=0}^{k_{1}}p^{(2r+2j+1)m}(p^{r-j}-p^{r-j-1})=\sum_{j=0}^{k_{1}}\left(p^{(2r+2j+1)m-j}-p^{(2r+2j+1)m-j-1}\right).
\end{align*}

If $k_{2}+k^{\prime}<r\leq k+1$,
then 
\begin{align*}
    C_{r,\eta}&=p^{-r}\left(\sum_{j=0}^{k-r}p^{(2r+2j+1)m}(p^{r-j}-p^{r-j-1})-p^{(2r+2(k-r+1)+1)m}p^{r-(k-r+1)-1}\right)\\
    &=\sum_{j=0}^{k-r}\left(p^{(2r+2j+1)m-j}-p^{(2r+2j+1)m-j-1}\right)-p^{(2k+3)m+r-k-2}.
\end{align*}

If $r>k+1$, then $C_{r,\eta}=0$.

Now we decompose $\sum_{r\geq 0}p^{-rs}C_{r,\eta}$ into $3$ parts:
the sum from $0$ to $k_{2}$, from $k_{2}+1$ to $k_{2}+k^{\prime}$,
and from $k_{2}+k^{\prime}+1$ to $k+1$.
 
For the sum from $0$ to $k_{2}$,
we have 
\begin{align*}
    \sum_{r=0}^{k_{2}}p^{-rs}C_{r,\eta}=\sum_{r=0}^{k_{2}}p^{r(4m-s-1)}+\sum_{0\leq j<r\leq k_{2}}\left(p^{r(2m-s)+j(2m-1)+m}-p^{r(2m-s)+j(2m-1)+m-1}\right).
\end{align*}
The second summation over $j<r$ can be written as 
\begin{align*}
    \sum_{j=0}^{k_{2}-1}\left(-p^{j(4m-s-1)+3m-s-1}+\sum_{r=j+1}^{k_{2}-1}p^{r(2m-s)+j(2m-1)+m}(1-p^{2m-s-1})+p^{k_{2}(2m-s)+j(2m-1)+m}\right).
\end{align*}
Hence $\sum\limits_{r=0}^{k_{2}}p^{-rs}C_{r,\eta}$ is equal to 
\begin{align*}
    \sum_{r=0}^{k_{2}}p^{r(4m-s-1)}(1-p^{3m-s-1})+p^{k_{2}(2m-s)+m}\sum_{j=0}^{k_{2}}p^{j(2m-1)}+(1-p^{2m-s-1})R_{1}(p^{2m-s}),
\end{align*}
where 
\[R_{1}(X)=\sum_{r=0}^{k_{2}-1}p^{m}\left(\sum_{j=0}^{r-1}p^{j(2m-1)}\right)X^{r}-p^{k_{2}(2m-1)+m}X^{k_{2}}\in \Z[X].\]

For the sum from $k_{2}+1$ to $k_{2}+k^{\prime}$ (when $k^{\prime}>0$),
we have 
\begin{align*}
    \sum_{r=k_{2}+1}^{k_{2}+k^{\prime}}p^{-rs}C_{r,\eta}=&\sum_{r=k_{2}+1}^{k_{2}+k^{\prime}}\sum_{j=0}^{k_{1}}\left(p^{r(2m-s)+j(2m-1)+m}-p^{r(2m-s)+j(2m-1)+m-1}\right)\\
    =&\sum_{j=0}^{k_{1}}p^{j(2m-1)+m}\sum_{r=k_{2}+1}^{k_{2}+k^{\prime}}\left(p^{r(2m-s)}-p^{r(2m-s)-1}\right).
\end{align*}
The inner sum over $r$ can be rewritten as 
\begin{align*}
    -p^{(k_{2}+1)(2m-s)-1}+\sum_{r=k_{2}+1}^{k_{2}+k^{\prime}-1}p^{r(2m-s)}(1-p^{2m-s-1})+p^{(k_{2}+k^{\prime})(2m-s)}.
\end{align*}
Hence $\sum\limits_{r=k_{2}+1}^{k_{2}+k^{\prime}}p^{-rs}C_{r,\eta}$ is equal to 
\begin{align*}
    \left(p^{(k_{2}+k^{\prime})(2m-s)+m}-p^{(k_{2}+1)(2m-s)+m-1}\right)\sum_{j=0}^{k_{1}}p^{j(2m-1)}+(1-p^{2m-s-1})R_{2}(p^{2m-s}),
\end{align*}
where 
\begin{align*}
    R_{2}(X)=\sum_{r=k_{2}+1}^{k_{2}+k^{\prime}-1}p^{m}\left(\sum_{j=0}^{k_{1}}p^{j(2m-1)}\right)X^{r}\in \Z[X].
\end{align*}

For the sum from $k_{2}+k^{\prime}+1$ to $k+1$,
we have 
\begin{align*}
    &\sum_{r=k_{2}+k^{\prime}+1}^{k+1}p^{-rs}C_{r,\eta}\\
    =&\sum_{r=k_{2}+k^{\prime}+1}^{k+1}\left(\sum_{j=0}^{k-r}\left(p^{r(2m-s)+j(2m-1)+m}-p^{r(2m-s)+j(2m-1)+m-1}\right)-p^{(2k+3)m+r(1-s)-k-2}\right)\\
    =&\sum_{j=0}^{k_{1}-1}p^{j(2m-1)+m}\left(\sum_{r=k_{2}+k^{\prime}+1}^{k-j}\left(p^{r(2m-s)}-p^{r(2m-s)-1}\right)-p^{(k-j+1)(2m-s)-1}\right)\\
    &-p^{(k_{2}+k^{\prime}+1)(2m-s)+k_{1}(2m-1)+m-1}.
\end{align*}
Use a similar trick to the other two parts,
we obtain that:
\begin{align*}
   \sum_{r=k_{2}+k^{\prime}+1}^{k+1}p^{-rs}C_{r,\eta}=-p^{(k_{2}+k^{\prime}+1)(2m-s)+m-1}\sum_{j=0}^{k_{1}}p^{j(2m-1)}+(1-p^{2m-s-1})R_{3}(p^{2m-s}),
\end{align*}
where 
\begin{align*}
    R_{3}(X)=\sum_{r=k_{2}+k^{\prime}+1}^{k}p^{m}\left(\sum_{j=0}^{k-r}p^{j(2m-1)}\right)X^{r}\in\Z[X].
\end{align*}
Putting all these $3$ parts together,
we have 
\begin{align*}
    \sum_{r=0}^{k+1}p^{-rs}C_{r,\eta}=&(1-p^{2m-s-1})(R_{1}+R_{2}+R_{3})(p^{2m-s})+(1-p^{3m-s-1})\sum_{r=0}^{k_{2}}p^{r(4m-s-1)}\\
    &+p^{k_{2}(2m-s)+m}\sum_{j=0}^{k_{2}}p^{j(2m-1)}+\left(p^{(k_{2}+k^{\prime})(2m-s)+m}-p^{(k_{2}+1)(2m-s)+m-1}\right)\sum_{j=0}^{k_{1}}p^{j(2m-1)}\\
    &-p^{(k_{2}+k^{\prime}+1)(2m-s)+m-1}\sum_{j=0}^{k_{1}}p^{j(2m-1)}\\
    =&(1-p^{2m-s-1})R_{\eta,p}(p^{2m-s})+\sum_{r=0}^{k_{2}}p^{r(4m-s-1)}-p^{3m-s-1}\sum_{r=0}^{k_{1}}p^{r(4m-s-1)},
\end{align*}
where 
\begin{align*}
    R_{\eta,p}(X)=R_{1}(X)+R_{2}(X)+R_{3}(X)+p^{m}\left(\sum_{j=0}^{k_{2}}p^{j(2m-1)}\right)X^{k_{2}}+p^{m}\left(\sum_{j=0}^{k_{1}}p^{j(2m-1)}\right)X^{k_{2}+k^{\prime}}
\end{align*}
is the desired polynomial $R_{\eta,p}(X)$ in the theorem.
The functional equation follows directly.
\end{proof}

\subsubsection{Split case}
\label{section split case of rank 2 coefficient}
When $p$ splits in $E$,
the self-dual Hermitian lattice $\bff{V}_{0}(\Z_{p})$ over 
$\oo_{E_{p}}\simeq\Z_{p}\times \Z_{p}$ 
inside the split Hermitian space $\bff{V}_{0}(\Q_{p})$ over $E_{p}\simeq \Q_{p}\times \Q_{p}$
can be identified with 
the direct sum of two copies of the quadratic $\Z_{p}$-lattice $L=\left(\Z_{p}^{n},((u_{i}),(v_{i}))=\sum_{i=1}^{n} u_{i}v_{i}\right)$, and one has 
\[\rmm{Tr}\lrangle{(x_{1},y_{1})}{(x_{2},y_{2})}=(x_{1},y_{2})+(x_{2},y_{1}),\text{ for any }x_{1},x_{2},y_{1},y_{2}\in L.\]
Taking $q(v)=\lrangle{v}{v}$,
we view $\bff{V}_{0}(\Z_{p})$ as a quadratic $\Z_{p}$-lattice,
and then we can apply \Cref{thm Siegel series orthogonal case}.
For any integers $r_{1},r_{2}\geq 0$, set 
\begin{align*}
        S_{T}(r_{1},r_{2}):=&\int_{v\in (p^{-r_1},p^{-r_2})\bff{V}_{0}(\Z_{p})} \chi_{T}^{-1}(v)\rmm{Char}(p^{\max(r_{1},r_{2})}\lrangle{v}{v}\in \Z_{p}) \, dv\\
        =&\int_{\substack{x\in p^{-r_1}L\\y\in p^{-r_2}L}} \psi_{p}^{-1}\left((x,T_{2})+(y,T_{1})\right)\rmm{Char}(p^{\max(r_{1},r_{2})}(x,y)\in \Z_{p}\, )dxdy.
\end{align*}
\begin{lemma}\label{lemma reduce split to orth}
    For any $T=(T_{1},T_{2})\in \bff{V}_{0}(\Q_{p})$ with $\lrangle{T}{T}\neq 0$ and $r_{1}\leq r_{2}$, 
    we have 
    \[S_{T}(r_{1},r_{2})=p^{n(r_{2}-r_{1})}B_{r_{1},(p^{r_{1}-r_{2}}T_{1},T_{2})},\]
    where $B_{r,\eta}$ is defined as in \Cref{thm Siegel series orthogonal case}.
\end{lemma}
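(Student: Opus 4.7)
The idea is to view $\bff{V}_{0}(\Z_{p})$, with its Hermitian form $\lrangle{\,}{\,}$ in the split case, as a split quadratic $\Z_{p}$-lattice of rank $2n$ so that the sum $\sum_{r\geq 0}p^{-rs}S_{T}(r_{1},r_{2})$ fits directly into the framework of \Cref{thm Siegel series orthogonal case}. Concretely, under the identification $\bff{V}_{0}(\Z_{p})\cong L\oplus L$ with $L=(\Z_{p}^{n},(\,,\,))$, the Hermitian form $q(v)=\lrangle{v}{v}$ on a vector $v=(x,y)\in L\oplus L$ becomes $(x,y)\in\Z_{p}$, and the associated bilinear form $((x_{1},y_{1}),(x_{2},y_{2}))\mapsto (x_{1},y_{2})+(x_{2},y_{1})$ matches the standard split form of rank $2n$ from \Cref{thm Siegel series orthogonal case} with $m=n$. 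Under this identification the integrand of $S_{T}(r_{1},r_{2})$ already is in the desired shape, except that $x$ and $y$ lie on different ``levels'' $p^{-r_{1}}L$ and $p^{-r_{2}}L$.

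The main step is then a single change of variables $y=p^{r_{1}-r_{2}}\widetilde{y}$ to put both components on the common level $p^{-r_{1}}L$. Since $r_{1}\leq r_{2}$, this substitution contributes a Jacobian factor $|p^{r_{1}-r_{2}}|_{\Q_{p}}^{n}=p^{n(r_{2}-r_{1})}$, rescales the character exponent by $(y,T_{1})\mapsto p^{r_{1}-r_{2}}(\widetilde{y},T_{1})$, and rewrites the support condition $p^{r_{2}}(x,y)\in\Z_{p}$ as $p^{r_{1}}(x,\widetilde{y})\in\Z_{p}$. These three effects combine so that the transformed integrand is precisely the integrand of $B_{r_{1},(p^{r_{1}-r_{2}}T_{1},T_{2})}$, up to the discrepancy between $\psi_{p}^{-1}$ in the definition of $S_{T}$ and $\psi_{p}$ in the definition of $B_{r,\eta}$.

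Finally, to remove this sign discrepancy, I would apply the involution $(x,\widetilde{y})\mapsto(-x,-\widetilde{y})$, which preserves the measure on $p^{-r_{1}}L\oplus p^{-r_{1}}L$, preserves the characteristic condition because $(x,\widetilde{y})$ is bilinear, and negates the argument of the character, turning $\psi_{p}^{-1}$ into $\psi_{p}$. After invoking the symmetry $(\widetilde{y},T_{1})=(T_{1},\widetilde{y})$ of the bilinear form on $L$, the resulting integral is literally $B_{r_{1},(p^{r_{1}-r_{2}}T_{1},T_{2})}$, yielding the claimed identity. The non-degeneracy hypothesis $\lrangle{T}{T}\neq 0$ translates to $(T_{1},T_{2})\neq 0$, hence $q((p^{r_{1}-r_{2}}T_{1},T_{2}))=p^{r_{1}-r_{2}}\lrangle{T}{T}\neq 0$, so the $B$-integral is within the scope of \Cref{thm Siegel series orthogonal case}. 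The whole argument is essentially bookkeeping; the only potential pitfall is to consistently track the $\Q_{p}$-valued identification of $\lrangle{v}{v}$ as the quadratic form $q$ (versus the trace pairing, which differs by a factor of $2$), and to match the normalization of Haar measures when rescaling $y$.
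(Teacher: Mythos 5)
Your proof is correct and takes essentially the same route as the paper's: the single substitution $y=p^{r_{1}-r_{2}}\widetilde{y}$, with Jacobian $|p^{r_{1}-r_{2}}|_{\Q_{p}}^{n}=p^{n(r_{2}-r_{1})}$, simultaneously moves the domain to $p^{-r_{1}}L$, rescales the character exponent to $(\widetilde{y},p^{r_{1}-r_{2}}T_{1})$, and turns the condition $p^{r_{2}}(x,y)\in\Z_{p}$ into $p^{r_{1}}(x,\widetilde{y})\in\Z_{p}$, which is exactly the computation in the paper. Your explicit handling of the $\psi_{p}^{-1}$ versus $\psi$ discrepancy via the measure-preserving involution $v\mapsto -v$ (equivalently $B_{r,\eta}=B_{r,-\eta}$), and your check that $q((p^{r_{1}-r_{2}}T_{1},T_{2}))\neq 0$, are details the paper leaves implicit but are correct and welcome.
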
 
\begin{proof}
    Replace the variable $y\in p^{-r_{2}L}$ by $p^{r_{1}-r_{2}}y$, we obtain that 
    \begin{align*}
        S_{T}(r_{1},r_{2})&=\int_{\substack{x\in p^{-r_1}L\\ p^{r_1-r_2}y\in p^{-r_2} L}} \psi_{p}^{-1}\left((x,T_{2})+p^{r_{1}-r_{2}}(y,T_{1})\right)\rmm{Char}(p^{r_{2}}(x,p^{r_{1}-r_{2}}y)\in \Z_{p}) \, dx d(p^{r_{1}-r_{2}}y)\\
        &=p^{n(r_{2}-r_{1})}\int_{v=(x,y)\in p^{-r_1}\bff{V}_{0}(\Z_{p})} \psi_{p}^{-1}\left(\tr\lrangle{(p^{r_{1}-r_{2}}T_{1},T_{2})}{v}\right)\rmm{Char}(p^{r_{1}}q(v)\in \Z_{p})dv\\
        &=p^{n(r_{2}-r_{1})}B_{r_{1},(p^{r_{1}-r_{2}}T_{1},T_{2})}. \qedhere
    \end{align*}
\end{proof}
\begin{prop}\label{prop Siegel seris split case}
    Let $p$ be a prime that splits in $E$.
    For any $T=(T_1,T_2)\in \bff{V}_{0}(\Q_{p})$ with $\lrangle{T}{T}\neq 0$,
    $E_{2,p}^{T}(s)$ is non-zero if and only if $T\in \bff{V}_{0}(\Z_{p})$.
    In this case, if $p^{k}\|\lrangle{T}{T}$,
    then we have 
    \[E_{2,p}^T(s)=(1-p^{n-2s})Q_{T,p}(p^{\frac{n+1}{2}-s}),\]
    where $Q_{T,p}$ is a monic polynomial of degree $2k$ with coefficients in $\Z[p^{1/2}]$,
    satisfying the functional equation 
    \[X^{2k}Q_{T,p}(X^{-1})=Q_{T,p}(X).\]
    Moreover,
    the coefficients of even degree terms of $Q_{T,p}$ are all integral,
    and those of odd degree terms are all integral multiples of $p^{1/2}$.
\end{prop}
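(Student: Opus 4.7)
The plan is to reduce the evaluation of $E_{2,p}^{T}(s)$ to the quadratic Siegel series of \Cref{thm Siegel series orthogonal case}. Since $p$ splits in $E$, forgetting the $\oo_{E_p}$-action turns $\bff{V}_{0}(\Z_{p})$ into the split quadratic $\Z_{p}$-lattice $L \oplus L$ of rank $2n$, so \Cref{thm Siegel series orthogonal case} applies with $m = n$. Starting from \Cref{lemma finite rank 2 coefficients}(1), I would first split the double sum according to whether $r_{1} \leq r_{2}$ or $r_{1} > r_{2}$: in the former region \Cref{lemma reduce split to orth} rewrites the inner integral as $p^{n(r_{2}-r_{1})} B_{r_{1},(p^{r_{1}-r_{2}}T_{1},T_{2})}$, and a symmetric manipulation (swapping the two $\Z_{p}$-factors of $\oo_{E_{p}}$) handles the latter region with $(T_{1}, p^{r_{2}-r_{1}}T_{2})$ in place. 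The vanishing clause of \Cref{thm Siegel series orthogonal case} then immediately forces $T_{1}, T_{2} \in L$ for any non-zero contribution, which is exactly $T \in \bff{V}_{0}(\Z_{p})$, giving the first claim.

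Assuming this, setting $d = |r_{2} - r_{1}|$ and $r = \min(r_{1}, r_{2})$ collapses the double sum to
\[
E_{2,p}^{T}(s) = \sum_{d \geq 0} p^{d(n-s)} \Sigma(\eta_{d}^{(1)}) + \sum_{d \geq 1} p^{d(n-s)} \Sigma(\eta_{d}^{(2)}),
\]
where $\Sigma(\eta) := \sum_{r \geq 0} p^{-r(2s-1)} B_{r,\eta}$, $\eta_{d}^{(1)} = (p^{-d}T_{1}, T_{2})$, and $\eta_{d}^{(2)} = (T_{1}, p^{-d}T_{2})$. Applying \Cref{thm Siegel series orthogonal case} with its $s$ replaced by $2s - 1$ factors out the common $(1 - p^{n-2s})$, and putting $X = p^{(n+1)/2 - s}$ identifies
\[
Q_{T,p}(X) = P_{T,p}(X^{2}) + \sum_{d \geq 1} (p^{(n-1)/2} X)^{d} \bigl( P_{\eta_{d}^{(1)}, p}(X^{2}) + P_{\eta_{d}^{(2)}, p}(X^{2}) \bigr).
\]
The sum over $d$ is finite since $P_{\eta_{d}^{(i)}, p}$ vanishes once $\eta_{d}^{(i)} \notin L$, i.e.\ once $d > \min(v_{p}(T_{1}), v_{p}(T_{2}))$.

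The stated properties of $Q_{T,p}$ then follow from the corresponding properties of $P_{\eta,p}$ in \Cref{thm Siegel series orthogonal case} together with the identity $v_{p}(q(\eta_{d}^{(i)})) = k - d$. The $d = 0$ term contributes the monic top-degree piece $P_{T,p}(X^{2})$ of degree $2k$, while the $d$-th term has degree $2(k-d) + d = 2k - d < 2k$, so $Q_{T,p}$ is monic of degree $2k$. Since $n$ is even, $(p^{(n-1)/2})^{d}$ is integral for $d$ even and $p^{1/2}$ times an integer for $d$ odd; combined with $P_{\eta,p}(X^{2}) \in \Z[X^{2}]$ this yields the claimed parity of coefficients. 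Finally, the functional equation $X^{2k} Q_{T,p}(X^{-1}) = Q_{T,p}(X)$ reduces, inside each summand, to $X^{2(k-d)} P_{\eta_{d}^{(i)}, p}(X^{-2}) = P_{\eta_{d}^{(i)}, p}(X^{2})$, which is precisely \eqref{eqn functional eqn split case}. The main obstacle I anticipate is entirely bookkeeping: aligning the symmetry substitution in the $r_{1} > r_{2}$ region without double-counting the diagonal $r_{1} = r_{2}$, and tracking the half-integer $p$-exponents carefully so that the parity claim and functional equation come out cleanly.
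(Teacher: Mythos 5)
Your proposal is correct and follows essentially the same route as the paper: both reduce to the quadratic Siegel series of \Cref{thm Siegel series orthogonal case} via \Cref{lemma reduce split to orth}, split the double sum over $(r_{1},r_{2})$ according to the sign of $r_{2}-r_{1}$, and assemble
$Q_{T,p}(X)=P_{T,p}(X^{2})+\sum_{d\geq 1}(p^{(n-1)/2}X)^{d}\bigl(P_{(p^{-d}T_{1},T_{2}),p}(X^{2})+P_{(T_{1},p^{-d}T_{2}),p}(X^{2})\bigr)$,
with the degree, monicity, coefficient-parity and functional-equation claims verified term by term exactly as you do. The one slip is your truncation bound: the first family of terms vanishes for $d>v_{p}(T_{1})$ and the second for $d>v_{p}(T_{2})$, not both at $d>\min(v_{p}(T_{1}),v_{p}(T_{2}))$ as you wrote (the paper's sums run to $k_{1}=v_{p}(T_{1})$ and $k_{2}=v_{p}(T_{2})$ respectively), but since every surviving term still has degree $2k-d$ and the right parity, this does not affect any of the stated properties.
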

\begin{proof}
    Suppose that $p^{k_{1}}\|T_{1}$ and $p^{k_{2}}\| T_{2}$.
    We decompose the summation 
    \[E_{2,p}^{T}(s)=\sum_{r_{1},r_{2}\geq 0}p^{-(r_{1}+r_{2})s+\min(r_{1},r_{2})}S_{T}(r_{1},r_{2})\] 
    into 3 parts: $r_{1}=r_{2}$, $r_{1}<r_{2}$ and $r_{1}>r_{2}$.
    
    By \Cref{lemma reduce split to orth} and \Cref{thm Siegel series orthogonal case}, the $r_{1}=r_{2}$ part equals 
    \begin{align*}
        \sum_{r=0}^{\infty}p^{-r(2s-1)}B_{r,T}=(1-p^{n-2s})P_{T,p}(p^{n-2s+1}).
    \end{align*}
    Using \Cref{lemma reduce split to orth}, we rewrite the $r_{1}<r_{2}$ part as
    \begin{align*}
        \sum_{0\leq r_{1}<r_{2}}p^{-(r_{1}+r_{2})s+r_{1}}p^{n(r_{2}-r_{1})}B_{r_{1},(p^{r_{1}-r_{2}T_{1}},T_{2})}&=\sum_{r_{1}=0}^{\infty}\sum_{i=1}^{k_{1}}p^{-(2r_{1}+i)s+r_{1}+ni}B_{r_{1},(p^{-i}T_{1},T_{2})}\\
        &=\sum_{i=1}^{k_{1}}p^{i(n-s)}\left(\sum_{r_{1}=0}^{\infty}p^{-r_{1}(2s-1)}B_{r_{1},(p^{-i}T_{1},T_{2})}\right).
    \end{align*}
    The inner sum over $r_{1}$ equals $(1-p^{n-2s})P_{p,(p^{-i}T_{1},T_{2})}(p^{n-2s+1})$,
    thus the $r_{1}<r_{2}$ part equals 
    \[(1-p^{n-2s})\sum_{i=1}^{k_{1}}p^{\frac{i}{2}(n-1)}p^{i\left(\frac{n+1}{2}-s\right)}P_{(p^{-i}T_{1},T_{2}),p}(p^{n+1-2s}).\]
    By symmetry, the $r_{1}>r_{2}$ part equals 
    \[(1-p^{n-2s})\sum_{j=1}^{k_{2}}p^{\frac{j}{2}(n-1)}p^{j\left(\frac{n+1}{2}-s\right)}P_{(T_{1},p^{-j}T_{2}),p}(p^{n-2s+1}).\]
    Putting these $3$ parts together,
    we have 
    \[E_{2,p}^{T}(s)=(1-p^{n-2s})Q_{T,p}(p^{\frac{n+1}{2}-s}),\]
    where 
    \[Q_{T,p}(X)=P_{T,p}(X^{2})+\sum_{i=1}^{k_{1}}p^{\frac{i}{2}(n-1)}X^{i}P_{(p^{-i}T_{1},T_{2}),p}(X^{2})+\sum_{j=1}^{k_{2}}p^{\frac{j}{2}(n-1)}X^{j}P_{(T_{1},p^{-j}T_{2}),p}(X^{2}).\]
    By \Cref{thm Siegel series orthogonal case}, 
    $Q_{T,p}$ is zero when $T\notin \bff{V}_{0}(\Z_{p})$,
    and is a monic polynomial of degree $2k$ with coefficients in $\Z[p^{1/2}]$ when $T\in \bff{V}_{0}(\Z_{p})$.
    The functional equation follows from \eqref{eqn functional eqn split case}.
\end{proof}

\begin{cor}\label{cor unramified Siegel series split case} 
    Let $p$ be a prime that splits in $E$.
    For any $T\in\bff{V}_{0}(\Z_{p})$ with $\lrangle{T}{T}\in \Z_{p}^{\times}$,
we have 
$$E_{2,p}^T(s)=1-p^{n-2s}.$$
\end{cor}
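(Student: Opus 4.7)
My plan is to deduce \Cref{cor unramified Siegel series split case} as an immediate specialization of \Cref{prop Siegel seris split case}, in which the polynomial $Q_{T,p}$ collapses to the constant $1$.

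First, I would read off from the unit hypothesis $\lrangle{T}{T}\in\Z_{p}^{\times}$ that $v_{p}(\lrangle{T}{T})=0$, so the integer $k$ appearing in \Cref{prop Siegel seris split case} (defined by $p^{k}\|\lrangle{T}{T}$) equals zero. Next, I would invoke the proposition's assertion that $Q_{T,p}$ is a monic polynomial of degree $2k$: with $k=0$, a monic polynomial of degree zero must be identically $1$. Finally, substituting $Q_{T,p}\equiv 1$ into the formula $E_{2,p}^{T}(s)=(1-p^{n-2s})Q_{T,p}(p^{\frac{n+1}{2}-s})$ yields the desired identity. The other hypothesis of \Cref{prop Siegel seris split case}, namely $T\in\bff{V}_{0}(\Z_{p})$ with $\lrangle{T}{T}\neq 0$, is automatic from the corollary's assumptions.

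Since the argument reduces to invoking an already-proved proposition, there is no serious obstacle. As a brief sanity check, one could also verify the statement directly from the proof of \Cref{prop Siegel seris split case}: when $\lrangle{T}{T}\in\Z_{p}^{\times}$, the identity $\tr\lrangle{T}{T}=2(T_{1},T_{2})$ forces $v_{p}((T_{1},T_{2}))=0$, which in turn gives $k_{1}=k_{2}=0$ in the notation of that proof. Both auxiliary sums over $i$ and $j$ in the formula for $Q_{T,p}$ are then empty, so $Q_{T,p}(X)$ reduces to $P_{T,p}(X^{2})$; the Kim--Yamauchi polynomial $P_{T,p}$ of \Cref{thm Siegel series orthogonal case} has degree $k=0$ and is monic, hence equals $1$, confirming the same conclusion.
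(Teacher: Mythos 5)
Your proposal is correct and matches the paper's (implicit) argument: the corollary is stated as an immediate consequence of \Cref{prop Siegel seris split case}, where $\lrangle{T}{T}\in\Z_{p}^{\times}$ gives $k=0$, so the monic degree-$0$ polynomial $Q_{T,p}$ is identically $1$. Your sanity check via $k_{1}=k_{2}=0$ and the empty sums in the explicit formula for $Q_{T,p}$ is also accurate.
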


\subsubsection{Inert case}

When $p$ is inert in $E$, one can also view the self-dual Hermitian lattice $\bff{V}_0(\Z_{p})$
as a split rank $2n$ quadratic lattice over $\Z_p$,
equipped with the quadratic form $q(v)=\lrangle{v}{v}$.
Then 
$$E_{2,p}^T(s)=\sum_{r=0}^\infty p^{-r(2s-1)}B_{r,T},\text{ where }B_{r,T}=\int_{v\in p^{-r}\bff{V}_{0}(\Z_{p})}\chi_{T}^{-1}(v)\rmm{Char}\left(p^{r}\lrangle{v}{v}\in \Z_{p}\right)dv.$$
Applying \Cref{thm Siegel series orthogonal case},
we obtain the following result:
\begin{prop}
    \label{prop Siegel series inert case}
    Let $p$ be a prime inert in $E$.
    For any $T\in\bff{V}_{0}(\Q_{p})$ with $\lrangle{T}{T}\neq 0$,
    $E_{2,p}^{T}(s)$ is non-zero if and only if $T\in\bff{V}_{0}(\Z_{p})$.
    In this case, 
    if $p^{k}\|\lrangle{T}{T}$,
    then we have 
    \[E_{2,p}^{T}(s)=(1-p^{n-2s})Q_{T,p}^{\prime}(p^{n-2s+1}),\]
    where 
    $Q_{T,p}^{\prime}$ is a monic polynomial of degree $k$ in $\Z[X]$,
    satisfying the functional equation 
    \[X^{k}Q_{T,p}^{\prime}(X^{-1})=Q_{T,p}^{\prime}(X).\]
\end{prop}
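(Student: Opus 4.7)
The plan is to reduce directly to \Cref{thm Siegel series orthogonal case}, applied with $m = n$. The paragraph immediately preceding the proposition already rewrites $E_{2,p}^{T}(s)$ as $\sum_{r \geq 0} p^{-r(2s-1)} B_{r,T}$, so the task is to match this Siegel series with the one in \Cref{thm Siegel series orthogonal case} and then read off the answer.

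First, I would verify that $\bff{V}_{0}(\Z_{p})$, equipped with the quadratic form $q(v) := \lrangle{v}{v}$, is isomorphic to the split rank-$2n$ quadratic $\Z_{p}$-lattice appearing in \Cref{thm Siegel series orthogonal case}. Since $p$ is inert in $E$, the extension $E_{p}/\Q_{p}$ is unramified of degree $2$, and the local splitness hypothesis on $\bff{V}$ implies that the self-dual Hermitian lattice $\bff{V}_{0}(\Z_{p})$ decomposes into Hermitian hyperbolic planes. Restricting scalars from $\oo_{E_{p}}$ to $\Z_{p}$ turns each such plane into a split rank-$4$ quadratic $\Z_{p}$-lattice (using that $E_{p}/\Q_{p}$ is unramified, so the trace pairing is self-dual), and assembling these produces the required normal form.

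Next, I would match the character. Using $\chi_{T}(v) = \psi\bigl(\tr(\lrangle{T}{v}/\sqrt{-D})\bigr)$ and linearity of $\lrangle{\,}{\,}$ in the first argument, set $T' := T/\sqrt{-D}$; then $\chi_{T}(v) = \psi_{p}((T',v))$, where $(v,w) := \tr\lrangle{v}{w}$ is the symmetric bilinear form associated with $q$. Because $\sqrt{-D} \in \oo_{E_{p}}^{\times}$ in the inert case, one has $T \in \bff{V}_{0}(\Z_{p}) \iff T' \in \bff{V}_{0}(\Z_{p})$, and $v_{p}(q(T')) = v_{p}(\lrangle{T}{T}) = k$. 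The change of variable $v \mapsto -v$ then identifies the integral $B_{r,T}$ defining the $r$th term of $E_{2,p}^{T}(s)$ with $B_{r,T'}$ in the notation of \Cref{thm Siegel series orthogonal case}.

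Finally, applying \Cref{thm Siegel series orthogonal case} with $m = n$ and shifted parameter $s' = 2s - 1$ yields
\[
E_{2,p}^{T}(s) = (1 - p^{n-2s})\, Q'_{T,p}(p^{n-2s+1}), \qquad Q'_{T,p}(X) := P_{T',p}(X),
\]
and the claimed properties---vanishing outside $\bff{V}_{0}(\Z_{p})$, monic-ness, integrality, degree $k$, and the palindromic functional equation---all transfer directly from \Cref{thm Siegel series orthogonal case} and \eqref{eqn functional eqn split case}. I expect no substantial obstacle here, since \Cref{thm Siegel series orthogonal case} does the heavy lifting; the only step requiring any care is the structural identification of the restricted-scalars lattice as a split quadratic lattice in the exact normal form used by the cited theorem, which is straightforward given that $E_{p}/\Q_{p}$ is unramified.
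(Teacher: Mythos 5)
Your proposal is correct and follows exactly the route the paper takes: the paper likewise views the self-dual Hermitian lattice $\bff{V}_{0}(\Z_{p})$ as a split quadratic $\Z_{p}$-lattice of rank $2n$, rewrites $E_{2,p}^{T}(s)=\sum_{r\geq 0}p^{-r(2s-1)}B_{r,T}$, and cites \Cref{thm Siegel series orthogonal case} with $m=n$ and the shifted variable $2s-1$ (the paper in fact gives no separate proof, treating the proposition as an immediate corollary). Your explicit checks --- that restriction of scalars of a Hermitian hyperbolic plane along the unramified extension $E_{p}/\Q_{p}$ is a split rank-$4$ quadratic lattice, and that the twist $T\mapsto T/\sqrt{-D}$ matching $\chi_{T}$ to $\psi_{p}((\cdot,\cdot))$ is harmless because $\sqrt{-D}\in\oo_{E_{p}}^{\times}$ for $p$ inert --- are exactly the details the paper leaves implicit.
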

\begin{cor}\label{cor unramified Siegel series inert case}
    Let $p$ be a prime inert in $E$.
    For any $T\in \bff{V}_{0}(\Z_{p})$ with $\lrangle{T}{T}\in \Z_{p}^{\times}$,
    we have 
    \[E_{2,p}^{T}(s)=1-p^{n-2s}.\]
\end{cor}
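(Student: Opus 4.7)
The plan is to obtain the corollary as a direct specialization of \Cref{prop Siegel series inert case}. Since $\lrangle{T}{T}\in \Z_{p}^{\times}$, one has $k:=v_{p}(\lrangle{T}{T})=0$. Proposition 4.6 then furnishes a monic polynomial $Q_{T,p}^{\prime}(X)\in \Z[X]$ of degree $k=0$, which is forced to be the constant polynomial $1$. Substituting into the formula of \Cref{prop Siegel series inert case} yields
\[
E_{2,p}^{T}(s)=(1-p^{n-2s})\cdot Q_{T,p}^{\prime}(p^{n-2s+1})=1-p^{n-2s},
\]
as desired. In particular, $T\in \bff{V}_{0}(\Z_{p})$ with $\lrangle{T}{T}\in \Z_{p}^{\times}$ ensures we are in the non-vanishing case of the proposition.

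As a consistency check I would also verify this from scratch using the series expression in \Cref{lemma finite rank 2 coefficients}(2). The $r=0$ term is $\int_{v\in \bff{V}_{0}(\Z_{p})}\chi_{T}^{-1}(v)\,dv$; since $T\in \bff{V}_{0}(\Z_{p})$ and $\bff{V}_{0}(\Z_{p})$ is self-dual, $\lrangle{T}{v}\in \oo_{E_{p}}$ for all $v\in \bff{V}_{0}(\Z_{p})$, and together with the fact that $\psi_{E,p}$ has conductor $\oo_{E_{p}}$ in the inert case, this shows $\chi_{T}$ is trivial on $\bff{V}_{0}(\Z_{p})$, giving a contribution of $1$. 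The remaining terms with $r\geq 1$ should then, via the exponential-sum / Fourier analysis underlying \Cref{thm Siegel series orthogonal case}, collapse to the single correction $-p^{n-2s}$ coming from the $(1-p^{n-2s})$ prefactor, because the unit hypothesis $v_{p}(\lrangle{T}{T})=0$ severely restricts which $r$ contribute. No real obstacle is expected; the only thing to be careful about is matching the normalization of the quadratic form $q(v)=\lrangle{v}{v}$ with the hypotheses of \Cref{thm Siegel series orthogonal case} (split lattice of rank $2n$), but this identification is already built into the proof of \Cref{prop Siegel series inert case}.
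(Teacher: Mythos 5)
Your proof is correct and is exactly the paper's (implicit) argument: the corollary follows from \Cref{prop Siegel series inert case} because $\lrangle{T}{T}\in\Z_{p}^{\times}$ forces $k=0$, so the monic degree-$0$ polynomial $Q_{T,p}^{\prime}$ is the constant $1$. The additional consistency check is fine but not needed.
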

To be coherent with other cases,
let $Q_{T,p}(X)$ be $Q_{T,p}^{\prime}(X^{2})$,
which is a monic even polynomial of degree $2k$ in $\Z[X]$,
satisfying the functional equation 
\[X^{2k}Q_{T,p}(X^{-1})=Q_{T,p}(X),\]
and 
we have 
\[E_{2,p}^{T}(s)=(1-p^{n-2s})Q_{T,p}(p^{\frac{n+1}{2}-s}).\]

\subsubsection{Ramified case}
For $p\neq 2$ ramified in $E$,
again we view $\bff{V}_{0}(\Z_{p})$,
together with $q(v)=\lrangle{v}{v}$,
as a quadratic $\Z_{p}$-lattice in $\bff{V}_{0}(\Q_{p})$.
However, in this case, it is no longer isometric to the quadratic $\Z_{p}$-lattice given in \Cref{thm Siegel series orthogonal case}.

Assume that $n=2m$ is even, and 
let $e_{1},\ldots,e_{m},f_{1},\ldots,f_{m}$ be a hyperbolic basis for the self-dual Hermitian lattice $\bff{V}_{0}(\Z_{p})$ in the split Hermitian space $\bff{V}_{0}(\Q_{p})$,
\emph{i.e.\,}
\[\lrangle{e_{i}}{e_{j}}=0,\,\lrangle{f_{i}}{f_{j}}=0,\,\lrangle{e_{i}}{f_{j}}=\delta_{i,j},\text{ for any }1\leq i,j\leq m.\]
Let $\varpi$ be a uniformizer of $\oo_{E_{p}}$ such that 
$\tr(\varpi)=0$ and $\varpi^{2}=pu$ for some unit $u\in \Z_{p}^{\times}$.
For any vector $v\in \bff{V}_{0}(\Z_{p})$,
if we write it as 
\[v=\sum_{i=1}^{m}\left((x_{i}+x_{i+m}\varpi)e_{i}+(y_{i}+y_{i+m}\varpi)f_{i}\right),\,x_{i},y_{i}\in \Z_{p},\text{ for any }1\leq i\leq 2m,\]
then 
\begin{align*}
    \lrangle{v}{v}=\sum_{i=1}^{m}\tr((x_{i}+x_{i+m}\varpi)\cdot(y_{i}-y_{i+m}\varpi))=2\sum_{i=1}^{m}\left(x_{i}y_{i}-pux_{i+m}y_{i+m}\right).
\end{align*}
Hence,
the $\Z_{p}$-lattice $\bff{V}_{0}(\Z_{p})$ is isometric to the one in \Cref{thm Siegel series ramified case}.
\begin{prop}\label{prop Siegel series unitary ramified case}
    Let $p$ be an odd prime ramified in $E$.
    For any $T\in\bff{V}_{0}(\Q_{p})$ with $\lrangle{T}{T}\neq 0$,
    $E_{2,p}^{T}(s)$ is non-zero if and only if $T\in \bff{V}_{0}(\Z_{p})$.
    In this case, if $p^{k}\|\lrangle{T}{T}$,
    then we have 
    \[E_{2,p}^{T}(s)=(1-p^{\frac{n}{2}-s})Q_{T,p}(p^{\frac{n+1}{2}-s}),\]
    where $Q_{T,p}$ is a monic polynomial of degree $2k$ with coefficients in $\Z[p^{1/2}]$, satisfying the functional equation:
    \[X^{2k}Q_{T,p}(X^{-1})=Q_{T,p}(X).\]
     Moreover,
    the coefficients of even degree terms of $Q_{T,p}$ are all integral,
    and those of odd degree terms are all integral multiples of $p^{1/2}$.
\end{prop}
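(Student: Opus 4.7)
The strategy follows the template of Propositions 4.5 and 4.7, now using Theorem 4.3 in place of Theorem 4.2. The isometry between $\bff{V}_0(\Z_p)$, equipped with the quadratic form $q(v)=\lrangle{v}{v}$, and the rank $4m = 2n$ ramified quadratic lattice $L$ of Theorem 4.3 is already established in the paragraph preceding the proposition (the factor of $2$ being a unit for odd $p$). Under this isometry the character $\chi_{T,p}(v) = \psi_p(\tr_{E_p/\Q_p}\lrangle{T/\sqrt{-D}}{v})$ corresponds to $\psi_p((\eta,\cdot))$ with $\eta = T/\sqrt{-D} \in \bff{V}_0(\Q_p)$; note $\eta$ lies in $\varpi^{-1}\bff{V}_0(\Z_p)$ (the quadratic dual of $\bff{V}_0(\Z_p)$) exactly when $T \in \bff{V}_0(\Z_p)$, since $\sqrt{-D}$ has $E_p$-valuation one.

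Starting from Lemma 4.1(3), I would split $E_{2,p}^T(s) = E_{\mathrm{even}}(s) + E_{\mathrm{odd}}(s)$ by parity of $r$. For $r = 2r'$ even, $\varpi^{-2r'}\bff{V}_0(\Z_p) = p^{-r'}\bff{V}_0(\Z_p)$ and $\lceil r/2\rceil = \lfloor r/2\rfloor = r'$, so the integral is exactly $C_{r',\eta}$ and
$$E_{\mathrm{even}}(s) = \sum_{r' \geq 0} p^{-r'(2s-1)} C_{r',\eta}.$$
For $r = 2r'+1$ odd, I would make the change of variables $v = \varpi w$, with $w$ ranging over $p^{-r'-1}\bff{V}_0(\Z_p)$ (or equivalently, parameterize $\varpi^{-2r'-1}\bff{V}_0(\Z_p) = \varpi^{-1} \cdot p^{-r'}\bff{V}_0(\Z_p)$). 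Tracking the Jacobian $|N_{E_p/\Q_p}(\varpi)|_{\Q_p}^n = p^{-n}$, the transformation $q(\varpi w) = -p\alpha\, q(w)$, and $\chi_T(\varpi w) = \chi_{-\varpi T}(w)$, the odd integral becomes (up to a boundary term coming from the index-$p^n$ difference between the two lattices) a $C_{r'+1,\varpi\eta}$ integral, and Theorem 4.3 applied to the vector $\varpi\eta$ produces the closed form for $E_{\mathrm{odd}}(s)$.

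Adding the two, the common leading factor $(1-p^{n-2s})$ from Theorem 4.3 appears; using the factorization $(1-p^{n-2s}) = (1-p^{n/2-s})(1+p^{n/2-s})$, I would extract $(1-p^{n/2-s})$ as the desired L-factor, collecting the remainder in the variable $X = p^{(n+1)/2-s}$ (so $X^2 = p^{n+1-2s}$). The even-$r$ part yields $R_{\eta,p}(X^2)$ (a polynomial in $\Z[X^2]$) and supplies the even-degree part of $Q_{T,p}$; the odd-$r$ part, after extracting a factor of $p^{1/2}$ from $p^{-(2r'+1)s + r'+1} = p^{1/2} \cdot p^{-(2r'+1)(s-1/2)}$ and rewriting in $X$, yields $p^{1/2} X \cdot R_{\varpi\eta,p}(X^2)$ and supplies the odd-degree part with coefficients in $p^{1/2}\Z$. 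The degree count $\deg Q_{T,p} = 2k$ follows since $v_p(q(\varpi\eta)) = v_p(q(\eta))+1$, and the functional equation $X^{2k}Q_{T,p}(X^{-1}) = Q_{T,p}(X)$ follows from equation \eqref{eqn functional equation ramified case} applied to both $R_{\eta,p}$ and $R_{\varpi\eta,p}$. The non-vanishing criterion $T \in \bff{V}_0(\Z_p)$ translates into $k_2 \geq 0$ in Theorem 4.3 for both $\eta$ and $\varpi\eta$. The main obstacle is the bookkeeping: verifying that the tail terms $\sum_{r=0}^{k_2} p^{r(4m-s-1)} - p^{3m-s-1}\sum_{r=0}^{k_1} p^{r(4m-s-1)}$ from Theorem 4.3 for $\eta$ and $\varpi\eta$, together with the boundary correction in the odd substitution, cancel precisely so that the overall leading factor simplifies from $(1-p^{n-2s})$ to $(1-p^{n/2-s})$, and that the combined remainder is genuinely polynomial in $X$ with the claimed integrality (rather than merely rational in $p^{\pm 1/2}$).
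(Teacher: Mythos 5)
Your proposal follows the paper's own argument essentially step for step: split by parity of $r$, substitute $v=\varpi w$ in the odd part so that the two halves become the Siegel series of Theorem \ref{thm Siegel series ramified case} for $T/\varpi$ and for $T$ (your $\eta$ and $\varpi\eta$) with the prefactor $p^{s-n}$ and the missing $r=0$ term as the only corrections, then factor $(1-p^{n-2s})=(1-p^{n/2-s})(1+p^{n/2-s})$ and check that the tail terms recombine with a clean $(1-p^{n/2-s})$ in front. The bookkeeping you flag as the main obstacle does close up exactly as you hope (the paper records the resulting $Q_{T,p}$ explicitly and verifies integrality and the functional equation via two auxiliary polynomials), the only cosmetic caveat being that after extracting the $L$-factor both $R_{T/\varpi,p}$ and $R_{T,p}$ contribute to both the even- and odd-degree coefficients of $Q_{T,p}$, not one to each.
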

\begin{proof}
    We divide the Siegel series $E_{2,p}^{T}(s)$ into two parts according to the parity of $r$:
\begin{align*}
    E_{2,p}^{T}(s)=&\sum_{r\geq 0}p^{-2rs+r}\int_{v\in p^{-r}\bff{V}_{0}(\Z_{p})}\psi_{p}^{-1}\left(\left(T/\varpi,v\right)\right)\mathrm{Char}(p^{r}\lrangle{v}{v}\in \Z_{p})dv\\
    &+\sum_{r\geq 1}p^{-(2r-1)s+r}\int_{v\in \varpi p^{-r}\bff{V}_{0}(\Z_{p})}\psi_{p}^{-1}\left(\left(T,v/\overline{\varpi}\right)\right)\mathrm{Char}(p^{r-1}\lrangle{v}{v}\in\Z_{p})dv\\
    =&\sum_{r\geq 0}p^{-r(2s-1)}\int_{v\in p^{-r}\bff{V}_{0}(\Z_{p})}\psi_{p}^{-1}\left(\left(T/\varpi,v\right)\right)\rmm{Char}(p^{r}\lrangle{v}{v}\in\Z_{p})dv\\
    &+p^{s-n}\sum_{r\geq 1}p^{-r(2s-1)}\int_{v\in p^{-r}\bff{V}_{0}(\Z_{p})}\psi_{p}^{-1}\left(\left(T,v\right)\right)\rmm{Char}(p^{r}\lrangle{v}{v}\in \Z_{p})dv\\
    =&\sum_{r\geq 0}p^{-r(2s-1)}C_{r,T/\varpi}+p^{s-n}\left(-1+\sum_{r\geq 0}p^{-r(2s-1)}C_{r,T}\right).
\end{align*}
    Set $k_{1}=v_{p}(T)$, $k_{2}=v_{p}(\varpi T)$, $k=v_{p}(\lrangle{T}{T})$ and $k^{\prime}=k-k_{1}-k_{2}$.
    These numbers are exactly those associated to $T$ in the setting of \Cref{thm Siegel series ramified case}.
    Applying \Cref{thm Siegel series ramified case},
    we obtain that:
    \begin{align*}
        E_{2,p}^{T}(s)=&(1-p^{2m-2s})R_{T/\varpi,p}(p^{2m-2s+1})+\sum_{r=0}^{k_{1}}p^{r(4m-2s)}-p^{3m-2s}\sum_{r=0}^{k_{2}-1}p^{r(4m-2s)}\\
        &+p^{s-2m}\left((1-p^{2m-2s})R_{T,p}(p^{2m-2s+1})+\sum_{r=1}^{k_{2}}p^{r(4m-2s)}-p^{3m-2s}\sum_{r=0}^{k_{1}}p^{r(4m-2s)}\right)\\
        =&(1-p^{2m-2s})\left(R_{T/\varpi,p}(p^{2m-2s+1})+p^{s-2m}R_{T,p}(p^{2m-2s+1})\right)\\
        &+(1-p^{m-s})\left(\sum_{r=0}^{k_{1}}p^{r(4m-2s)}+p^{2m-s}\sum_{r=0}^{k_{2}-1}p^{r(4m-2s)}\right)\\
        =&(1-p^{m-s})Q_{T,p}(p^{m-s+\frac{1}{2}}),
    \end{align*}
    where 
    \begin{align*}
        Q_{T,p}(X)=&(1+p^{-1/2}X)\left(R_{T/\varpi}(X^{2})+p^{-m+\frac{1}{2}}X^{-1}R_{T,p}(X^{2})\right)\\
        &+\sum_{r=0}^{k_{1}}p^{r(2m-1)}X^{2r}+\sum_{r=0}^{k_{2}-1}p^{r(2m-1)+m-\frac{1}{2}}X^{2r+1}\\
        =& R_{T/\varpi,p}(X^{2}) + p^{-\frac{1}{2}}X\left(R_{T/\varpi,p}(X^{2})+\sum_{r=0}^{k_{2}-1}p^{r(2m-1)+m}X^{2r}\right)\\
        &+p^{-m+\frac{1}{2}}X^{-1}R_{T,p}(X^{2})+\left(p^{-m}R_{T,p}(X^{2})+\sum_{r=0}^{k_{1}}p^{r(2m-1)}X^{2r}\right).
    \end{align*}
    By \eqref{eqn functional equation ramified case}, the degree $2k-2$ polynomial $R_{T,\varpi}(X^{2})$ satisfies 
    \[R_{T/\varpi,p}(X^{-2})=X^{-2(k-1+1)}R_{T/\varpi,p}(X^{2})=p^{-2k}R_{T/\varpi,p}(X^{2}),\]
    and the degree $2k-1$ polynomial $X^{-1}R_{T,p}(X^{2})$ satisfies
    \[(X^{-1})^{-1}R_{T,p}(X^{-2})=X^{-1}\cdot X^{2}\cdot X^{-2(k+1)}R_{T,p}(X^{2})=X^{-2k}\cdot X^{-1}R_{T,p}(X^{2}).\]
    On the other hand, 
    set 
    \[Q_{1}(X):=p^{-m}X R_{T/\varpi,p}(X^{2})+\sum_{r=0}^{k_{2}-1}p^{r(2m-1)}X^{2r+1}.\]
    The polynomial
    \begin{align*}
        Q_{1}(X)=\sum_{r=0}^{k_{2}-1}\frac{p^{(r+1)(2m-1)}-1}{p^{2m-1}-1}X^{2r+1}+\frac{p^{k_{2}(2m-1)}-1}{p^{2m-1}-1}\sum_{r=k_{2}}^{k_{1}+k^{\prime}-1}X^{2r+1}+\sum_{r=k_{1}+k^{\prime}}^{k-1}\frac{p^{(k-r)(2m-1)-1}}{p^{2m-1}-1}X^{2r+1}
    \end{align*}
    has degree $2k-1$ and satisfies 
    $Q_{1}(X^{-1})=X^{-2k}Q_{1}(X)$.    
    
    Similarly,
    set 
    \[Q_{2}(X):=p^{-m}R_{T,p}(X^{2})+\sum_{r=0}^{k_{1}}p^{r(2m-1)}X^{2r}.\]
    The polynomial 
    \[Q_{2}(X)=\sum_{r=0}^{k_{1}}\frac{p^{(r+1)(2m-1)}-1}{p^{2m-1}-1}X^{2r}+\frac{p^{(k_{1}+1)(2m-1)}-1}{p^{2m-1}}\sum_{r=k_{1}+1}^{k_{2}+k^{\prime}-1}X^{2r}+\sum_{r=k_{2}+k^{\prime}}^{k}\frac{p^{(k-r+1)(2m-1)}-1}{p^{2m-1}-1}X^{2r}\]
    has degree $2k$ and satisfies $Q_{2}(X^{-1})=X^{-2k}Q_{2}(X)$.
    In conclusion, $Q_{T,p}(X)$ satisfies all the desired properties.
\end{proof}
\begin{cor}\label{cor unramified Siegel series ramified case}
    Let $p$ be an odd prime ramified in $E$.
    For any $T\in\bff{V}_{0}(\Z_{p})$ with $\lrangle{T}{T}\in\Z_{p}^{\times}$,
    we have 
    \[E_{2,p}^{T}(s)=1-p^{\frac{n}{2}-s}.\]
\end{cor}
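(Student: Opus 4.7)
The plan is to deduce this corollary as an immediate specialization of \Cref{prop Siegel series unitary ramified case}. The hypothesis $T\in\bff{V}_{0}(\Z_{p})$ with $\lrangle{T}{T}\in\Z_{p}^{\times}$ means that in the notation of that proposition we have $k=v_{p}(\lrangle{T}{T})=0$, so the polynomial $Q_{T,p}(X)$ is monic of degree $2k=0$ with coefficients in $\Z[p^{1/2}]$; the only such polynomial is $Q_{T,p}(X)=1$. Substituting into the formula
\[E_{2,p}^{T}(s)=(1-p^{\frac{n}{2}-s})Q_{T,p}(p^{\frac{n+1}{2}-s})\]
yields the stated equality $E_{2,p}^{T}(s)=1-p^{\frac{n}{2}-s}$.

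As a sanity check that this is consistent with the direct computation, I would verify that in the case $\lrangle{T}{T}\in\Z_{p}^{\times}$ the two parts of the Siegel series decomposition used in the proof of \Cref{prop Siegel series unitary ramified case} collapse correctly. With $k=0$ we have $k_{1}=k_{2}=k^{\prime}=0$, so $R_{T/\varpi,p}$ and $R_{T,p}$ vanish (they are polynomials of degrees $2k-2$ and $2k$ respectively with the relevant coefficients forced to zero by $k_{1}=k_{2}=0$); the auxiliary sums $\sum_{r=0}^{k_{1}}p^{r(4m-2s)}=1$ and $\sum_{r=0}^{k_{2}-1}p^{r(4m-2s)}=0$, giving $(1-p^{m-s})\cdot 1$, matching $(1-p^{n/2-s})$ since $n=2m$. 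No further ingredients are needed, and there is no serious obstacle: the entire content of the corollary has already been absorbed into the proposition.
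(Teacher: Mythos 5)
Your proposal is correct and is exactly how the paper obtains this corollary: it is stated without separate proof as the $k=0$ specialization of \Cref{prop Siegel series unitary ramified case}, where $Q_{T,p}$ being monic of degree $2k=0$ forces $Q_{T,p}=1$. Your sanity check on the internal decomposition (with $k_1=k_2=k'=0$, the $R$-polynomials vanishing, and the surviving terms giving $1-p^{m-s}$) is also consistent with the computation in the proposition's proof.
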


\subsection{Rank 2 Fourier coefficients: archimedean components}
\label{section archimedean place rank 2 Fourier infinite part}
For the archimedean part $E_{2,\infty}^{T}$,
we first analyze the function $f_{\ell,\infty}(w_{2}n,s=\ell+1)$ on $n\in\bff{N}(\R)$.
\begin{lemma}
    \label{lemma rank 2 values of archimedean section}
    For any $v\in V_{0}=\bff{V}_{0}(\R)$ and $x\in \R$,
    set:
    \[\alpha:=\alpha(v,x)=-\frac{\lrangle{v}{v}}{2}+ix+1,\text{ and }\beta:=\beta(v)=\sqrt{2}\lrangle{v}{u_{2}}.\]
    Then we have 
    \[\pi^{2\ell+1}f_{\ell,\infty}(w_{2}n(v,ix),s=\ell+1)=\frac{(\alpha u_{1}+\beta u_{2})^{\ell}(-\overline{\beta}u_{1}+\overline{\alpha}u_{2})^{\ell}}{(|\alpha(v,x)|^{2}+|\beta(v)|^{2})^{\ell+s}(\ell!)^{2}}.\]
\end{lemma}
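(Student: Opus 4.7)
The plan is to produce an explicit Iwasawa decomposition $w_{2}n(v,ix) = mn'k$ with $m = (z,h) \in \bff{M}(\R)$, $n' \in \bff{N}(\R)$, and $k = (k_{1},k_{2}) \in K_{\infty} = \rmm{U}(V_{2}^{+})\times\rmm{U}(V_{n}^{-})$, and then substitute into the definition of $f_{\ell,\infty}$ from \Cref{subsection definition of Heisenberg Eisenstein}. This reduces the problem to computing $|z|$ and the action of $k_{1}$ on $[u_{1}^{\ell}][u_{2}^{\ell}]$, since
\[\pi^{2\ell+1}f_{\ell,\infty}(w_{2}n(v,ix),s) \;=\; |z|^{2s}\,[u_{1}^{\ell}][u_{2}^{\ell}]\cdot k_{1}.\]

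The main input is the identity $b_{1}p = \nu(p)^{-1}b_{1}$ from \Cref{section unitary group and its Heisenberg parabolic}: writing $w_{2}n(v,ix) = pk$ gives $b_{1}\cdot(w_{2}n(v,ix)) = z^{-1}\,b_{1}\cdot k$. I would compute the left-hand side using $b_{1}w_{2} = b_{2}$ together with the explicit formula for $n(v,ix)$ acting on $b_{2}$, and then rewrite $b_{1}, b_{2}$ via $b_{1} = \frac{1}{\sqrt{2}}(u_{1}+v_{n})$ and $b_{2} = \frac{1}{\sqrt{2}}(u_{1}-v_{n})$, arriving at
\[b_{1}\cdot(w_{2}n(v,ix)) \;=\; \tfrac{1}{\sqrt{2}}\alpha u_{1} + \tfrac{1}{\sqrt{2}}(\alpha-2)v_{n} + v.\]
Since $K_{\infty}$ preserves the decomposition $V = V_{2}^{+}\oplus V_{n}^{-}$ and the $V_{2}^{+}$-component of $b_{1}$ is $\tfrac{1}{\sqrt{2}}u_{1}$, projecting both sides of $b_{1}\cdot(w_{2}n(v,ix)) = z^{-1}\,b_{1}k$ onto $V_{2}^{+}$---and noting that $v\in V_{0}$ projects to $\lrangle{v}{u_{2}}u_{2} = \tfrac{\beta}{\sqrt{2}}u_{2}$ since $u_{1}\perp V_{0}$---yields
\[u_{1}\cdot k_{1} \;=\; z(\alpha u_{1}+\beta u_{2}).\]
Unitarity of $k_{1}$ then forces $|z|^{2} = (|\alpha|^{2}+|\beta|^{2})^{-1}$, and choosing $z>0$ real completes $k_{1}$ to the element of $\rmm{SU}(V_{2}^{+})$ with $u_{2}\cdot k_{1} = z(-\overline{\beta}u_{1}+\overline{\alpha}u_{2})$.

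To finish, I would expand
\[[u_{1}^{\ell}][u_{2}^{\ell}]\cdot k_{1} \;=\; \frac{(u_{1}\cdot k_{1})^{\ell}(u_{2}\cdot k_{1})^{\ell}}{(\ell!)^{2}} \;=\; \frac{z^{2\ell}(\alpha u_{1}+\beta u_{2})^{\ell}(-\overline{\beta}u_{1}+\overline{\alpha}u_{2})^{\ell}}{(\ell!)^{2}},\]
where the $\det^{-\ell}$-twist defining $\mbb{V}_{\ell}$ contributes trivially since $\det k_{1}=1$, and then combine with $|z|^{2s}=z^{2s}$ via $z^{2\ell+2s}=(|\alpha|^{2}+|\beta|^{2})^{-(\ell+s)}$ to recover the asserted formula. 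The one subtlety I anticipate is checking independence of $f_{\ell,\infty}$ from the residual Iwasawa ambiguity $\bff{P}(\R)\cap K_{\infty}$; this reduces to the observation that $[u_{1}^{\ell}][u_{2}^{\ell}]$ is invariant under the diagonal torus of $\rmm{U}(V_{2}^{+})$ once the $\det^{-\ell}$-twist is accounted for, a one-line calculation ensuring that the specific choices made above are harmless. Beyond this bookkeeping, no step is a real obstacle: the computation is a direct unfolding once the Iwasawa decomposition is in hand.
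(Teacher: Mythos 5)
Your proposal is correct and follows essentially the same route as the paper: compute $b_{1}\cdot w_{2}n(v,ix)$ explicitly, project onto $V_{2}^{+}$ to pin down $|\nu(p)|^{2}=(|\alpha|^{2}+|\beta|^{2})^{-1}$ and the unitary matrix $k_{+}$ sending $u_{1}\mapsto(|\alpha|^{2}+|\beta|^{2})^{-1/2}(\alpha u_{1}+\beta u_{2})$, then substitute into the definition of $f_{\ell,\infty}$. Your extra remarks (the explicit $v_{n}$-component, $\det k_{+}=1$ killing the $\det^{-\ell}$-twist, and the invariance of $[u_{1}^{\ell}][u_{2}^{\ell}]$ under the residual Iwasawa ambiguity) are all consistent with, and slightly more careful than, the paper's argument.
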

\begin{proof}
    From the explicit action of $\bff{P}$ on $\bff{V}$, 
    one has 
    \[b_{1}w_{2}n(v,ix)=(-\lrangle{v}{v}/2+ix)b_{1}+b_{2}+v.\]
    Under the Iwasawa decomposition $\bff{G}(\R)=\bff{P}(\R)K_{\infty}$, 
    we write $w_{2}n(v,ix)$ as $pk$ for some $p\in \bff{P}(\R)$ and $k\in K_{\infty}$.
    Then 
    \begin{equation}\label{eqn searching Iwasawa decomposition}
         b_{1}w_{2}n(v,ix)=\nu(p)^{-1}b_{1}k.
    \end{equation}  
    Let $k_{+}$ be the factor of $k$ in $\rmm{U}(V_{2}^{+})$.
    Taking the $V_{2}^{+}$ components of both sides of \eqref{eqn searching Iwasawa decomposition},
    we obtain:
    \[\frac{1}{\sqrt{2}}\left(\alpha(v,x)u_{1}+\beta(v)u_{2}\right)=\nu(p)^{-1}\frac{u_{1}k_{+}}{\sqrt{2}}.\]
    The norms of both sides give us the identity $|\alpha(v,x)|^{2}+|\beta(v)|^{2}=|\nu(p)|^{-2}$.
    One may assume that 
    $\nu(p)^{-1}=\sqrt{|\alpha(v,x)|^{2}+|\beta(v)|^{2}}$,
    then with respect to the basis $\{u_{1},u_{2}\}$,
    the element $k^{+}\in \rmm{U}(V_{2}^{+})$ can be written as the Hermitian matrix 
    \[\frac{1}{\sqrt{|\alpha(v,x)|^{2}+|\beta(v)|^{2}}}\left(\begin{matrix}
        \alpha(v,x) & -\overline{\beta(v)}\\
        \beta(v) & \overline{\alpha(v,x)}
    \end{matrix}\right).\]
    Plugging these into $\pi^{2\ell+1}f_{\ell,\infty}(w_{2}n(v,ix),\ell+1)=|\nu(p)|^{2s}[u_{1}^{\ell}][u_{2}^{\ell}]\cdot k^{+}$,
    we get the desired value.
\end{proof}
Using \Cref{lemma rank 2 values of archimedean section},
one can write the archimedean coefficient 
$E_{2,\infty}^{T}:=E_{2,\infty}^{T}(1,\ell+1)$ 
as a concrete integral over $v\in V_{0}$ and $x\in\R$.
In particular,
we have the following formula for the coefficient of $[u_{1}^{\ell}][u_{2}^{\ell}]$ of $E_{2,\infty}^{T}$, denoted by $I_{0}(T;\ell)$:
\begin{align*}
    I_{0}(T;\ell)=&\pi^{-2\ell-1}\int_{v\in V_{0}}\int_{x\in \R}\chi_{T}^{-1}(v)\sum_{k=0}^{\ell}\frac{{\ell \choose k}^{2}(x^{2}+A)^{k}(-B)^{\ell-k}}{(x^{2}+A+B)^{2\ell+1}}dxdv\\
    =&\pi^{-2\ell-1}\int_{v\in V_{0}}\chi_{T}^{-1}(v)\sum_{k=0}^{\ell}(-B)^{\ell-k}{\ell\choose k}^{2}\int_{x\in \R}\frac{(x^{2}+A)^{k}}{(x^{2}+A+B)^{2\ell+1}}dxdv
\end{align*}
where $A=A(v):=|\alpha|^{2}-x^{2}=\left(1-\frac{\lrangle{v}{v}}{2}\right)^{2}$, and $B=B(v):=|\beta|^{2}=2|\lrangle{v}{u_{2}}|^{2}$.

By \cite[Theorem 3.5]{HMY}, for a non-zero vector $T\in V_{0}$,
when $\lrangle{T}{T}<0$,
the $\mbb{V}_{\ell}$-valued function $E_{2,\infty}^{T}(g,\ell+1)$ is identically zero on $\bff{G}(\R)$;
when $\lrangle{T}{T}\geq 0$, 
$E_{2,\infty}^{T}(g,\ell+1)$ must be a multiple of the generalized Whittaker function $\call{W}_{T}(g)$ given by \eqref{eqn generalized Whittaker function}.
To know this multiple exactly,
it suffices to compare the coefficients of $[u_{1}^{\ell}][u_{2}^{\ell}]$ in $E_{2,\infty}^{T}$ and $\call{W}_{T}(1)$,
\emph{i.e.\,}compare $I_{0}(T;\ell)$ and 
$K_{0}(4\sqrt{2}\pi |\lrangle{u_{2}}{T}|)=K_{0}(4\pi\sqrt{B(T)})$.
\subsubsection{Fourier transforms}
\label{section Fourier transform of coefficients}
Equipped with the bilinear form $(v,w):=\tr_{\C}\lrangle{v}{w}=2\operatorname{Re} \lrangle{v}{w}$,
we view $V_{0}$ as a quadratic space with signature $(2,2n-2)$,
and define the Fourier transform of a Schwartz function $f$ on $V_{0}$ to be 
\[\widehat{f}(T):=\int_{v\in V_{0}}f(v)e^{-2\pi i (T,v)}dv,\,T\in V_{0}.\]

We use the following lemma to write $I_{0}(T;\ell)$ as a Fourier transform on $V_{0}$:
\begin{lemma}
    \label{lemma integral over center of Heisenberg}
    For any real number $C,D$
    and two natural numbers $m<n$,
    we have 
    \[\int_{\R}\frac{(x^{2}+C)^{m}}{(x^{2}+D)^{n}}dx=\frac{D^{m-n+\frac 12}}{(n-1)!}\sum_{k=0}^{m}{m\choose k}\left(\frac{C}{D}\right)^{m-k}\Gamma(k+\tfrac 12)\Gamma(n-k-\tfrac 12).\]
\end{lemma}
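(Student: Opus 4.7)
The plan is to reduce the integral to a sum of standard Beta-function integrals by binomial expansion of the numerator, since the denominator is already a simple power of a quadratic. First, I would write
\[
(x^{2}+C)^{m}=\sum_{k=0}^{m}\binom{m}{k}x^{2k}C^{m-k},
\]
which gives
\[
\int_{\R}\frac{(x^{2}+C)^{m}}{(x^{2}+D)^{n}}\,dx=\sum_{k=0}^{m}\binom{m}{k}C^{m-k}\int_{\R}\frac{x^{2k}}{(x^{2}+D)^{n}}\,dx.
\]
(Here I tacitly assume $D>0$, which is the case in the application, and the convergence condition $2n-2m>1$ is guaranteed by $m<n$ with $m,n\in\Z_{\geq 0}$.)

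Next, I would evaluate each inner integral by the change of variables $x=\sqrt{D}\,t$, which turns it into
\[
D^{k-n+\frac{1}{2}}\int_{\R}\frac{t^{2k}}{(t^{2}+1)^{n}}\,dt.
\]
The remaining integral is the classical Beta integral; via the substitution $u=t^{2}/(1+t^{2})$ (or equivalently $t=\tan\theta$) it equals $B(k+\tfrac{1}{2},\,n-k-\tfrac{1}{2})=\Gamma(k+\tfrac12)\Gamma(n-k-\tfrac12)/\Gamma(n)=\Gamma(k+\tfrac12)\Gamma(n-k-\tfrac12)/(n-1)!$.

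Substituting back and factoring the common $D^{m-n+\frac{1}{2}}$ out of the sum (using $C^{m-k}D^{k-n+\frac{1}{2}}=D^{m-n+\frac{1}{2}}(C/D)^{m-k}$) yields exactly the claimed identity. There is no real obstacle here; the only points to keep straight are the convergence condition (handled by $m<n$) and the bookkeeping of the $D$-exponents when pulling $D^{m-n+\frac{1}{2}}$ outside the sum.
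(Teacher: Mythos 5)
Your proof is correct and complete: the binomial expansion, the rescaling $x=\sqrt{D}\,t$, and the Beta integral $\int_{\R}t^{2k}(1+t^{2})^{-n}\,dt=B(k+\tfrac12,n-k-\tfrac12)$ (convergent since $k\le m<n$ forces $n-k-\tfrac12>0$) give exactly the stated identity, and your remark that one needs $D>0$ (as holds in the application, where $D=A+B$) is an appropriate clarification of the lemma's loose hypothesis. The paper dismisses this as ``an exercise of calculus,'' and your argument is precisely the standard solution to that exercise.
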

\begin{proof}
    An exercise of calculus.
\end{proof}
Applying \Cref{lemma integral over center of Heisenberg} to $I_{0}(T;\ell)$, 
we obtain that $I_{0}(T;\ell)=\widehat{F_{0,\ell}}(T)$, where 
\begin{align*}
    F_{0,\ell}(v):=\pi^{-2\ell}\sum_{k=0}^{\ell}(-B)^{\ell-k}{\ell\choose k}^{2}\frac{(A+B)^{k-2\ell-\frac 12}}{(2\ell)!}\sum_{j=0}^{k}{k\choose j}\left(\frac{A}{A+B}\right)^{k-j}\frac{\cdot(2j)!(4\ell-2j)!}{2^{4\ell}\cdot j!(2\ell-j)!}.
\end{align*}
Now we can reduce the problem to the comparison of $F_{0,\ell}(T)$
and the inverse Fourier transform of (the product of a certain function with) $K_{0}(4\pi\sqrt{B(T)})$.
Recall the following result for quadratic spaces by Pollack:
\begin{prop}\label{prop Fourier transform of K Bessel functions}
    ($v=0$ case of \cite[Corollary 4.5.4]{Po})
    Let $(V^{\prime},q)=V_{2}\oplus V_{m}$ be a non-degenerate quadratic space over $\R$ of signature $(2,m)$,
    with associated symmetric bilinear form $B_{q}(v,w)=q(v+w)-q(v)-q(w)$,
    such that $q|_{V_{2}}$ is positive definite and $q|_{V_{m}}$ is negative definite.
    Let $v_{1}^{\prime},v_{2}^{\prime}$ be an orthonormal basis of $V_{2}$.
    For $x=x_{2}+x_{m}\in V_{2}\oplus V_{m}$,
    define $\|x\|^{2}:=2q(x_{2})-2q(x_{m})$.
    For any integer $\ell>m/2$ and $x=x_{2}+x_{m}\in V_{2}\oplus V_{m}$ with $\|x_{2}\|+\|x_{m}\|<\sqrt{2}$,
    we have 
    \begin{align*}
        &\int_{V^{\prime}}e^{i B_{q}(\omega,x)}\rmm{Char}(q(\omega)>0)q(\omega)^{\ell-m/2}K_{0}\left(\sqrt{2\left(B_{q}(\omega,v_{1}^{\prime})^{2}+B_{q}(\omega,v_{2}^{\prime})^{2}\right)}\right)d\omega\\
        =&2^{\ell}\pi^{\frac{m+2}{2}}\ell!\cdot\Gamma(\ell-m/2+1)\tau(\sqrt{2} x_{m},\sqrt{2}x_{2})^{-\ell-1}{_{2}F_{1}}\left(-\frac{\ell}{2},\frac{\ell+1}{2};1;\frac{2\|x_{2}\|^{2}}{\tau(\sqrt{2}x_{m},\sqrt{2}x_{2})^{2}}\right),
    \end{align*}
    where $\tau(\sqrt{2}x_{m},\sqrt{2}x_{2})^{2}:=\left(1+\frac{\|x_{m}\|^{2}-\|x_{2}\|^{2}}{2}\right)^{2}+2\|x_{2}\|^{2}$.
    Here \[{_{2}F_{1}}(a,b;c;z)=\sum_{n=0}^{\infty}\frac{(a)_{n}(b)_{n}}{(c)_{n}}\cdot\frac{z^{n}}{n!},\,|z|<1\] 
    is the hypergeometric function, where $(q)_{n}=\frac{\Gamma(q+n)}{\Gamma(q)}$ is the (rising) Pochhammer symbol.
\end{prop}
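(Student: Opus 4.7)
\medskip

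\noindent\textbf{Proof proposal.} The plan is to reduce the left-hand side to an iterated radial integral by exploiting two tools: the classical integral representation of the Bessel function,
\[
K_{0}(z)=\frac{1}{2}\int_{0}^{\infty}e^{-\frac{z}{2}(t+t^{-1})}\,\frac{dt}{t},
\]
and the $\rmm{O}(V_{2})\times\rmm{O}(V_{m})$ symmetry of the integrand. Since $v_{1}^{\prime},v_{2}^{\prime}$ is an orthonormal basis of $V_{2}$, one checks that the argument of $K_{0}$ equals $\sqrt{2}\,\|\omega_{2}\|_{+}$, where $\omega_{2}\in V_{2}$ is the projection of $\omega$ and $\|\omega_{2}\|_{+}^{2}=2q(\omega_{2})$. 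Using the symmetry, I would first reduce to the case where $x_{2}=\alpha v_{1}^{\prime}$ and $x_{m}=\beta\xi_{0}$ for a fixed unit vector $\xi_{0}\in V_{m}$, so that all angular dependence of $x$ is absorbed into two scalars $\alpha,\beta$.

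Substituting the integral representation of $K_{0}$ and swapping the order of integration, the inner integral becomes
\[
\int_{q(\omega)>0}e^{iB_{q}(\omega,x)}\,e^{-\frac{1}{2}(t+t^{-1})\|\omega_{2}\|_{+}}\,q(\omega)^{\ell-m/2}\,d\omega.
\]
Next, I would pass to polar coordinates: write $\omega_{2}=r(\cos\theta\,v_{1}^{\prime}+\sin\theta\,v_{2}^{\prime})$ with $r>0$, and $\omega_{m}=\rho\,\xi$ with $\xi$ on the unit sphere of $V_{m}$ and $\rho\geq 0$. The cone condition becomes $r>\rho$ and $q(\omega)=(r^{2}-\rho^{2})/2$, while the measure factorizes as $r\,dr\,d\theta\cdot\rho^{m-1}\,d\rho\,d\xi$. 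The $\theta$-integration yields a modified Bessel function $I_{0}(r\alpha)$ and the $\xi$-integration produces a Bessel factor in $\rho\beta$ of order $(m-2)/2$. The remaining iterated integral in $r,\rho,t$ can be evaluated by first reassembling the $t$-integral into a $K$-Bessel function, then evaluating the $\rho$-integral against the constraint $\rho<r$ and the weight $(r^{2}-\rho^{2})^{\ell-m/2}$ via a Sonine/Gegenbauer-type formula, leaving a single Hankel transform in $r$ of classical type.

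The main obstacle is the final bookkeeping: even after these reductions, identifying the resulting radial integral with the precise ${_{2}F_{1}}(-\ell/2,(\ell+1)/2;1;\cdot)$ on the right-hand side requires careful matching of normalizations and a nontrivial hypergeometric identity (Kummer's quadratic transformation is the most likely tool, to convert between $_{2}F_{1}$ evaluated at $2\|x_{2}\|^{2}/\tau^{2}$ and the natural argument $(\tau-1)/(\tau+1)$-type that comes out of the Hankel transform). A cleaner alternative, which I would pursue in parallel, is a uniqueness argument: both sides are real-analytic in $x$ on the domain $\|x_{2}\|+\|x_{m}\|<\sqrt{2}$, both transform equivariantly under $\rmm{O}(V_{2})\times \rmm{O}(V_{m})$, and both satisfy the same second-order system of PDEs coming from the Schmid-type operators annihilating the quaternionic minimal $K$-type of the relevant discrete series on $\rmm{O}(V^{\prime}\oplus H)$, with $H$ a hyperbolic plane. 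This system has a one-dimensional solution space of the correct transformation type, so it suffices to match the two sides at a single convenient point, e.g.\ $x_{2}=0$, where the left-hand side collapses to a Gaussian--Bessel integral and the right-hand side to $(1+\|x_{m}\|^{2}/2)^{-2\ell-2}\cdot{_{2}F_{1}}(\cdots;1;0)$.
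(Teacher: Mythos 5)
First, a point of comparison: the paper does not prove this proposition at all --- it is imported verbatim as the $v=0$ case of \cite[Corollary 4.5.4]{Po}, so there is no internal argument to measure yours against. Your outline is nevertheless in the spirit of how such formulas are actually established in \cite{Po}: integral representation of $K_{0}$, polar coordinates adapted to the $\rmm{O}(V_{2})\times\rmm{O}(V_{m})$ symmetry, Bessel-function angular integrals, and a Sonine/Gegenbauer reduction of the cone integral.

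As a proof, however, it has a genuine gap, and you name it yourself: the entire content of the identity is the ``final bookkeeping'' you defer, namely evaluating the residual radial integral and recognizing it as $\tau^{-\ell-1}\,{_{2}F_{1}}\bigl(-\tfrac{\ell}{2},\tfrac{\ell+1}{2};1;2\|x_{2}\|^{2}/\tau^{2}\bigr)$ with the precise constant $2^{\ell}\pi^{(m+2)/2}\ell!\,\Gamma(\ell-m/2+1)$; without carrying out that step (including the quadratic hypergeometric transformation you allude to), nothing is established. There are also small but consequential slips in the reduction: the $\theta$-integral of $e^{i\alpha r\cos\theta}$ against a radial weight produces $2\pi J_{0}(\alpha r)$, not $I_{0}(\alpha r)$, since the phase is purely oscillatory; and in your proposed normalization check at $x_{2}=0$ the right-hand side collapses to $\tau^{-\ell-1}=(1+\|x_{m}\|^{2}/2)^{-\ell-1}$, not $(1+\|x_{m}\|^{2}/2)^{-2\ell-2}$ --- you have substituted $\tau^{2}$ where $\tau$ belongs, which is exactly the kind of error the bookkeeping step must exclude. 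The fallback uniqueness argument is likewise not self-contained: that the left-hand side is annihilated by the relevant Schmid-type operators, and that the space of equivariant solutions with the required growth is one-dimensional, are themselves theorems (essentially the multiplicity-one statement for generalized Whittaker functionals underlying \cite{Po} and \cite{HMY}), so invoking them without proof merely relocates the difficulty. The efficient course here is the one the paper takes: cite \cite[Corollary 4.5.4]{Po}.
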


Take $(V^{\prime},q)$ to be $(V_{0},q(\omega)=\frac{\lrangle{\omega}{\omega}}{2})$,
as a quadratic space of signature $(2,2n-2)$,
and $v^{\prime}_{1}=u_{2}$,
$v^{\prime}_{2}=i u_{2}$.
Now we rewrite the objects appearing in \Cref{prop Fourier transform of K Bessel functions} in terms of our unitary notations: 
\begin{itemize}
    \item Let $m=2n-2$.
    \item The K-Bessel function appearing in the integral is $K_{0}(\sqrt{2}|\lrangle{\omega}{u_{2}}|)=K_{0}(\sqrt{B(\omega)})$.
    \item The norm $\|x_{2}\|^{2}$ equals $|\lrangle{x}{u_{2}}|^{2}=B(x)/2$.
    \item The function $\tau(\sqrt{2}x_{m},\sqrt{2}x_{2})^{2}$ equals $(1-\frac{\lrangle{x}{x}}{2})^{2}+2|\lrangle{x}{u_{2}}|^{2}=A(x)+B(x)$.
\end{itemize}
Hence \Cref{prop Fourier transform of K Bessel functions} can be rewritten as:
\begin{align*}
    &2^{-\ell+n-1}\int_{V_{0}}e^{i(\omega,x)/2}\rmm{Char}(\lrangle{\omega}{\omega}>0)\lrangle{w}{w}^{\ell-n+1}K_{0}(\sqrt{B(\omega)})d\omega\\
    =&2^{\ell}\pi^{n}\ell!(\ell-n+1)!(A(x)+B(x))^{-\frac{\ell+1}{2}}{_{2}F_{1}}\left(-\frac{\ell}{2},\frac{\ell+1}{2};1;\frac{B(x)}{A(x)+B(x)}\right).
\end{align*}
We replace the variable $\omega$ in the integral by $4\pi \omega$ and 
replace $\ell$ by $2\ell$ with $\ell>\frac {n-1}2$, then the inverse Fourier transform of 
$\rmm{Char}(\lrangle{\omega}{\omega}>0)\lrangle{\omega}{\omega}^{2\ell-n+1}K_{0}(4\pi \sqrt{B(\omega)})$
is 
\begin{align}\label{eqn inverse Fourier of generalized Whittaker}
       2^{-4\ell-n-3} \pi^{-4\ell+n-2}(2\ell)!(2\ell-n+1)!(A(x)+B(x))^{-\ell-\frac 12}{_{2}F_{1}}\left(-\ell,\ell+\tfrac 12;1;\tfrac{B(x)}{A(x)+B(x)}\right).
\end{align}

\subsubsection{Write \texorpdfstring{$F_{0,\ell}$}{} in terms of hypergeometric functions}
\label{section write F as hypergeometric function}
Set $z:=z(v)=\frac {B(v)}{A(v)+B(v)}$. Then we have: 
\[F_{0,\ell}(v)=\frac{2^{-4\ell}}{(2\ell)!}\pi^{-2\ell}(A+B)^{-\ell-\frac 12}\sum_{k=0}^{\ell}\sum_{j=0}^{k}{\ell\choose k}^{2}{k\choose j}\frac{(2j)!(4\ell-2j)!}{j!(2\ell-j)!}(-z)^{\ell-k}(1-z)^{k-j}.\]
\begin{prop}\label{prop summation in terms of hypergeometric}
    For any integers $0\leq j\leq k\leq \ell$,
    set $\displaystyle c_{j,k}={\ell\choose k}^{2}{k\choose j}\frac{(2j)!(4\ell-2j)!}{j!(2\ell-j)!}.$
    Then 
    \begin{align}\label{eqn calculation of combinatorics in Fourier}
        \sum_{k=0}^{\ell}\sum_{j=0}^{k}c_{j,k}(-z)^{\ell-k}(1-z)^{k-j}=\sum_{r=0}^{\ell}(-1)^{r}2^{2\ell-2r}\frac{(2\ell)!(2\ell+2r)!}{(r!)^{2}(\ell+r)!(\ell-r)!}z^{r}.
    \end{align} 
\end{prop}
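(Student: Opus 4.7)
The plan is to collapse the double sum on the left-hand side to a single polynomial in $z$ of degree $\ell$, then match coefficients of $z^{r}$ with the right-hand side. The key combinatorial step is to swap the order of summation from $(k,j)$ to $(j,k)$, use the Pascal-type identity $\binom{\ell}{k}^{2}\binom{k}{j}=\binom{\ell}{k}\binom{\ell}{j}\binom{\ell-j}{k-j}$, and set $m=k-j$ to put the inner sum over $k$ into the form
\[\sum_{m=0}^{\ell-j}\binom{\ell}{m+j}\binom{\ell-j}{m}(-z)^{\ell-j-m}(1-z)^{m}.\]
A residue-style coefficient extraction evaluates this as $[t^{\ell}](1+t)^{\ell}\bigl(1-z(1+t)\bigr)^{\ell-j}$, and expanding $\bigl(1-z(1+t)\bigr)^{\ell-j}$ and reading off $[t^{\ell}](1+t)^{\ell+i}=\binom{\ell+i}{\ell}$ yields $\sum_{i=0}^{\ell-j}\binom{\ell-j}{i}\binom{\ell+i}{\ell}(-z)^{i}$.

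Using this together with a further Pascal swap $\binom{\ell}{j}\binom{\ell-j}{i}=\binom{\ell}{i}\binom{\ell-i}{j}$ to exchange the resulting $(j,i)$-sums, the LHS of \eqref{eqn calculation of combinatorics in Fourier} reduces to
\[\sum_{r=0}^{\ell}(-z)^{r}\binom{\ell+r}{\ell}\binom{\ell}{r}\,S(\ell-r,\ell),\quad S(n,\ell):=\sum_{j=0}^{n}\binom{n}{j}\frac{(2j)!(4\ell-2j)!}{j!(2\ell-j)!}.\]
Matching coefficients of $z^{r}$ with the RHS and cancelling $\binom{\ell+r}{\ell}\binom{\ell}{r}=(\ell+r)!/\bigl((r!)^{2}(\ell-r)!\bigr)$ reduces the identity to the single-sum claim
\[S(n,\ell)=2^{2n}\frac{(2\ell)!(4\ell-2n)!}{\bigl((2\ell-n)!\bigr)^{2}},\quad n:=\ell-r.\]

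To prove this last identity, I would apply Legendre's duplication formula $(2k)!/k!=4^{k}(\tfrac{1}{2})_{k}$ to rewrite $S(n,\ell)=4^{2\ell}\sum_{j}\binom{n}{j}(\tfrac{1}{2})_{j}(\tfrac{1}{2})_{2\ell-j}$, then use the reversal $(\tfrac{1}{2})_{2\ell-j}=(-1)^{j}(\tfrac{1}{2})_{2\ell}/(\tfrac{1}{2}-2\ell)_{j}$ together with $(-1)^{j}\binom{n}{j}=(-n)_{j}/j!$ to express the sum as $4^{2\ell}(\tfrac{1}{2})_{2\ell}\cdot{_{2}F_{1}}(-n,\tfrac{1}{2};\tfrac{1}{2}-2\ell;1)$. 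The terminating Chu--Vandermonde summation ${_{2}F_{1}}(-n,b;c;1)=(c-b)_{n}/(c)_{n}$ evaluates this to $4^{2\ell}(\tfrac{1}{2})_{2\ell}(-2\ell)_{n}/(\tfrac{1}{2}-2\ell)_{n}$, and a final application of Legendre duplication to $(\tfrac{1}{2}-2\ell)_{n}$ produces the closed form claimed. The main obstacle is simply the careful bookkeeping of Pochhammer symbols through the two uses of Legendre duplication; conceptually everything reduces to the generating-function evaluation of the inner $k$-sum and the recognition of the Chu--Vandermonde pattern at the end.
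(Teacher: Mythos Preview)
Your argument is correct and follows essentially the same route as the paper's proof: both reduce the double sum to the same inner $j$-sum $\sum_{j}\binom{\ell-r}{j}\frac{(2j)!(4\ell-2j)!}{j!(2\ell-j)!}$ and evaluate it via the Chu--Vandermonde identity ${_{2}F_{1}}(-n,\tfrac{1}{2};\tfrac{1}{2}-2\ell;1)=(-2\ell)_{n}/(\tfrac{1}{2}-2\ell)_{n}$ with $n=\ell-r$. The only cosmetic difference is in the treatment of the $k$-sum: you package it as the coefficient extraction $[t^{\ell}](1+t)^{\ell}\bigl(1-z(1+t)\bigr)^{\ell-j}$, while the paper first extracts the $z^{r}$-coefficient and then applies the Vandermonde identity $\sum_{k}\binom{\ell}{k}\binom{r}{\ell-k}=\binom{\ell+r}{\ell}$ directly.
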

\begin{proof}
    We write:
\[\sum_{k=0}^{\ell}\sum_{j=0}^{k}c_{j,k}(-z)^{\ell-k}(1-z)^{k-j}=\sum_{r=0}^{\ell}(-1)^{r}C(r)z^{r}.\]
    The product $(-z)^{\ell-k}(1-z)^{k-j}$ has a non-zero $z^{r}$ term if and only if 
$\ell-k\leq r\leq \ell-j$. Thus
\[(-1)^{r}C(r)=\sum_{j=0}^{\ell-r}\sum_{k=\ell-r}^{\ell}c_{j,k}{{k-j}\choose {r-\ell+k}}(-1)^{\ell-k+r-\ell+k}=(-1)^{r}\sum_{j=0}^{\ell-r}\sum_{k=\ell-r}^{\ell}c_{j,k}{{k-j}\choose {r-\ell+k}}.\]

To compute $C(r)$,
we need the following lemma:
\begin{lemma}
    \label{lemma combinatorics summations}
    \begin{enumerate}
        \item For integers $0\leq a\leq b$, one has \[\sum_{i=a}^{b}{b\choose i}{{b-a}\choose{b-i}}={{2b-a}\choose b}.\]
        \item For any integer $0\leq r\leq \ell$, one has 
        \[\sum_{i=0}^{\ell-r}\frac{\binom{2\ell}{i}\binom{\ell-r}{i}}{\binom{4\ell}{2i}}=2^{2\ell-2r}\frac{\binom{2\ell+2r}{\ell+r}}{\binom{4\ell}{2\ell}}.\]
    \end{enumerate}
\end{lemma}
\begin{proof}
    The identity in (1) is obvious.
    For the identity in (2),
    rewrite the left-hand side as:
    \begin{align*}
        \sum_{i=0}^{\ell-r}\frac{(-\ell+r)_{i}(\tfrac 12)_{i}}{(-2\ell+\tfrac 12)_{i}}\frac{1}{i!}={_{2}F_{1}}(-(\ell-r),1/2;-2\ell+1/2;1).
    \end{align*}
     By Chu-Vandermond identity,
    this value of hypergeometric function equals
    \[\frac{(-2\ell)_{\ell-r}}{(-2\ell+\tfrac 12)_{\ell-r}}
    =2^{2\ell-2r}\frac{\binom{2\ell+2r}{\ell+r}}{\binom{4\ell}{2\ell}}.\qedhere\]
\end{proof}
Now we return to the calculation of $C(r)$, applying \Cref{lemma combinatorics summations}:
\begin{align*}
    C(r)&=\sum_{j=0}^{\ell-r}\sum_{k=\ell-r}^{\ell}\frac{(\ell!)^{2}(2j)!(4\ell-2j)!}{k!((\ell-k)!)^{2}(j!)^{2}(r-\ell+k)!(\ell-r-j)!(2\ell-j)!}\\
    &=\frac{\ell!}{r!}\sum_{j=0}^{\ell-r}\frac{(2j)!(4\ell-2j)!}{(j!)^{2}(\ell-r-j)!(2\ell-j)!}\sum_{k=\ell-r}^{\ell}\binom{\ell}{k}\binom{r}{\ell-k}\\
    &=\frac{\ell!}{r!}\cdot\frac{(\ell+r)!}{\ell!r!}\sum_{j=0}^{\ell-r}\frac{(2j)!(4\ell-2j)!}{(j!)^{2}(\ell-r-j)!(2\ell-j)!}\\
    &=\frac{(\ell+r)!}{(r!)^{2}}\cdot \frac{(4\ell)!}{(\ell-r)!(2\ell)!}\sum_{j=0}^{\ell-r}\frac{\binom{2\ell}{j}\binom{\ell-r}{j}}{\binom{4\ell}{2j}}\\
    &=\frac{(\ell+r)!}{(r!)^{2}}\cdot\frac{(4\ell)!}{(\ell-r)!(2\ell)!}\cdot 2^{2\ell-2r}\frac{\frac{(2\ell+2r)!}{((\ell+r)!)^{2}}}{\frac{(4\ell)!}{((2\ell)!)^{2}}}\\
    &=2^{2\ell-2r}\frac{(2\ell)!(2\ell+2r)!}{(r!)^{2}(\ell+r)!(\ell-r)!}.\qedhere
\end{align*}
\end{proof}
Applying \Cref{prop summation in terms of hypergeometric} to $F_{0,\ell}$,
we have 
\begin{align*}
    F_{0,\ell}(v)
    =&\frac{2^{-4\ell}\pi^{-2\ell}}{(2\ell)!(A+B)^{\ell+\frac 12}}\sum_{r=0}^{\ell}(-1)^{r}2^{2\ell-2r}\frac{(2\ell)!(2\ell+2r)!}{(r!)^{2}(\ell+r)!(\ell-r)!}z^{r}\\
    =&\frac{2^{-2\ell}(2\ell)!\pi^{-2\ell}}{(\ell!)^{2}(A+B)^{\ell+\frac 12}}\sum_{r=0}^{\ell}\frac{(-\ell)_{r}(\ell+\frac 12)_{r}}{(1)_{r}}\frac{z^{r}}{r!}\\
    =&\frac{2^{-2\ell}(2\ell)!\pi^{-2\ell}}{(\ell!)^{2}(A+B)^{\ell+\frac 12}}\cdot {_{2}F_{1}}(-\ell,\ell+\tfrac 12;1;z).
\end{align*}
We have shown the following result:
\begin{prop}\label{prop archimedean rank 2 Fourier}
    For any $T\in V_{0}$,
    the coefficient $I_{0}(T;\ell)$ of $[u_{1}^{\ell}][u_{2}^{\ell}]$ in $E_{2,\infty}^{T}$ satisfies
    \[I_{0}(T;\ell)=\widehat{F_{0,\ell}}(T),\]
        where for $v\in V_{0}$, 
    \[F_{0,\ell}(v)=\frac{2^{-2\ell}(2\ell)!\pi^{-2\ell}}{(\ell!)^{2}(A+B)^{\ell+1/2}}\cdot {_{2}F_{1}}(-\ell,\ell+1/2;1;\tfrac{B}{A+B}),\]
    and $A(v)=\left(1-\frac{\lrangle{v}{v}}{2}\right)^{2},\,B(v)=2|\lrangle{v}{u_{2}}|^{2}$.
\end{prop}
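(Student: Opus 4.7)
The plan is to assemble the proposition from the three ingredients developed earlier in the subsection, treating the whole argument as a bookkeeping exercise chaining those results together. First I would extract the $[u_1^\ell][u_2^\ell]$-coefficient from the closed form for $f_{\ell,\infty}(w_2 n(v,ix),\ell+1)$ given in \Cref{lemma rank 2 values of archimedean section}. Expanding $(\alpha u_1+\beta u_2)^\ell(-\overline{\beta}u_1+\overline{\alpha}u_2)^\ell$ by the binomial theorem and isolating the $u_1^\ell u_2^\ell$ term produces the sum $\sum_{k=0}^{\ell}\binom{\ell}{k}^2(-1)^{\ell-k}|\alpha|^{2k}|\beta|^{2(\ell-k)}$; substituting $|\alpha|^2=x^2+A(v)$ and $|\beta|^2=B(v)$ yields the displayed integral expression for $I_0(T;\ell)$.

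Next I would perform the inner integral over $x\in\R$ by applying \Cref{lemma integral over center of Heisenberg} with $(C,D,m,n)=(A,A+B,k,2\ell+1)$. This produces, after swapping integration and sum, a double sum (in the original $k$ and the new index $j$ from the lemma) that already exhibits $I_0(T;\ell)$ as $\widehat{F_{0,\ell}}(T)$ for an explicit $F_{0,\ell}(v)$ in double-sum form. The prefactor $(A+B)^{-\ell-1/2}/(2\ell)!$ appears naturally from the $D^{m-n+1/2}/(n-1)!$ in the lemma, and the remaining task is to compress the $(j,k)$-summation into hypergeometric form.

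The compression step is supplied by \Cref{prop summation in terms of hypergeometric}, which rewrites the double sum as the single sum $\sum_{r=0}^{\ell}(-1)^r 2^{2\ell-2r}\frac{(2\ell)!(2\ell+2r)!}{(r!)^2(\ell+r)!(\ell-r)!}z^r$ with $z=B/(A+B)$. I would then clear the factor $2^{2\ell-2r}(2\ell)!/(\ell!)^2$ and rewrite the remaining coefficients via the Pochhammer symbols $(-\ell)_r$ and $(\ell+\tfrac12)_r$ to recognize the sum as $\frac{2^{2\ell}(2\ell)!}{(\ell!)^2}\,{_2F_1}(-\ell,\ell+\tfrac12;1;z)$. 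Once the $2^{2\ell}$ is absorbed into the existing $2^{-4\ell}$ factor the stated formula for $F_{0,\ell}$ drops out.

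The only genuine obstacle is the identity inside \Cref{prop summation in terms of hypergeometric}; everything else is termwise integration or algebraic simplification. That identity in turn reduces to the combinatorial facts in \Cref{lemma combinatorics summations}, the nontrivial piece being the Chu--Vandermonde evaluation of ${_2F_1}(-(\ell-r),\tfrac12;-2\ell+\tfrac12;1)$, which collapses to the closed form $2^{2\ell-2r}\binom{2\ell+2r}{\ell+r}\big/\binom{4\ell}{2\ell}$. Once that closed form is in hand, the hypergeometric recognition at the very end is automatic, so the structural content of the proof lies entirely in validating that Chu--Vandermonde collapse and then carefully tracking the accompanying powers of $2$ and factorial ratios through the derivation.
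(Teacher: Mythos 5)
Your proposal is correct and follows essentially the same route as the paper: extract the $[u_1^\ell][u_2^\ell]$-coefficient from \Cref{lemma rank 2 values of archimedean section}, integrate out $x$ via \Cref{lemma integral over center of Heisenberg} to exhibit $I_0(T;\ell)$ as $\widehat{F_{0,\ell}}(T)$ in double-sum form, and then compress to the hypergeometric expression via \Cref{prop summation in terms of hypergeometric}, whose proof hinges on the Chu--Vandermonde evaluation in \Cref{lemma combinatorics summations}. You have correctly identified that the only genuinely nontrivial input is that combinatorial identity; the rest is the same bookkeeping the paper carries out.
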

Comparing \Cref{prop archimedean rank 2 Fourier} with \eqref{eqn inverse Fourier of generalized Whittaker},
the function 
\[\pi^{-2\ell+n-2}2^{-2\ell-n-3}(\ell!)^{2}(2\ell-n+1)!F_{0,\ell}(v)\]
is equal to the inverse Fourier transform of 
\[\rmm{Char}(\lrangle{\omega}{\omega}>0)\lrangle{\omega}{\omega}^{2\ell-n+1}K_{0}(4\pi \sqrt{B(\omega)}),\]
which implies the following theorem:
\begin{thm}\label{thm rank 2 Fourier archimedean generalised Whittaker}
    For any $T\in V_{0}$ and $\ell>n$,
    we have
    \[I_{0}(T;\ell)=\rmm{Char}(\lrangle{T}{T}>0)\frac{2^{2\ell+n+3}\pi^{2\ell-n+2}\lrangle{T}{T}^{2\ell-n+1}}{(\ell!)^{2}(2\ell-n+1)!}K_{0}(|\beta_{T}(1)|).\]
    In particular,
    $E_{2,\infty}^{T}(g,\ell+1)$ is identically zero when $T$ is non-zero and $\lrangle{T}{T}\leq 0$,
    and equals 
    \[\frac{2^{2\ell+n+3}\pi^{2\ell-n+2}\lrangle{T}{T}^{2\ell-n+1}}{(\ell!)^{2}(2\ell-n+1)!}\call{W}_{T}(g)\]
    when $\lrangle{T}{T}>0$.
\end{thm}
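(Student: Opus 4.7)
The plan is to combine \Cref{prop archimedean rank 2 Fourier}, which gives $I_{0}(T;\ell)=\widehat{F_{0,\ell}}(T)$, with the inverse Fourier transform identity \eqref{eqn inverse Fourier of generalized Whittaker} obtained from Pollack's \Cref{prop Fourier transform of K Bessel functions}. Setting
\[h(\omega):=\rmm{Char}(\lrangle{\omega}{\omega}>0)\lrangle{\omega}{\omega}^{2\ell-n+1}K_{0}(4\pi\sqrt{B(\omega)}),\]
both the explicit formula for $F_{0,\ell}$ and the formula \eqref{eqn inverse Fourier of generalized Whittaker} for $\check{h}$ take the form ``(explicit scalar)$\,\cdot(A+B)^{-\ell-1/2}\cdot{_{2}F_{1}}(-\ell,\ell+1/2;1;B/(A+B))$''. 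A direct comparison of the two prefactors (already flagged in the paragraph preceding the theorem) yields the pointwise identification
\[F_{0,\ell}=\frac{2^{2\ell+n+3}\pi^{2\ell-n+2}}{(\ell!)^{2}(2\ell-n+1)!}\,\check{h}.\]

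First I would confirm that Fourier inversion applies: the hypothesis $\ell>n$ controls the polynomial weight $\lrangle{\omega}{\omega}^{2\ell-n+1}$, while the exponential decay of $K_{0}(4\pi\sqrt{B(\omega)})$ transverse to the null set of $B$ renders $h$ tempered on the positive cone. Then $\widehat{\check h}=h$ applied to the identity above produces
\[I_{0}(T;\ell)=\widehat{F_{0,\ell}}(T)=\frac{2^{2\ell+n+3}\pi^{2\ell-n+2}}{(\ell!)^{2}(2\ell-n+1)!}\,h(T).\]
A short unpacking of definitions gives $4\pi\sqrt{B(T)}=4\sqrt{2}\pi|\lrangle{T}{u_{2}}|=|\beta_{T}(1)|$ (using $B(T)=2|\lrangle{T}{u_{2}}|^{2}$ and $\beta_{T}(z,h)=4\sqrt{2}\pi\lrangle{u_{2}}{zT\cdot h}$), which converts the displayed formula into the first claim of the theorem.

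For the statement about $E_{2,\infty}^{T}(g,\ell+1)$, I would invoke \cite[Theorem 3.5]{HMY}: for non-zero $T$ with $\lrangle{T}{T}\leq 0$ the function vanishes identically on $\bff{G}(\R)$, consistent with the $\rmm{Char}(\lrangle{T}{T}>0)$ factor just obtained; for $\lrangle{T}{T}>0$ it must be a scalar multiple of the generalized Whittaker function $\call{W}_{T}$. The scalar is read off by comparing the $[u_{1}^{\ell}][u_{2}^{\ell}]$-coefficient at $g=1$: from \eqref{eqn generalized Whittaker function} specialized to $n=m=k=1$, only the $v=0$ summand contributes to this coefficient, giving $\call{W}_{T}(1)|_{[u_{1}^{\ell}][u_{2}^{\ell}]}=K_{0}(|\beta_{T}(1)|)$, so the scalar is precisely $I_{0}(T;\ell)/K_{0}(|\beta_{T}(1)|)$, yielding the final formula. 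The main (mild) obstacle is justifying Fourier inversion for the non-Schwartz distribution $h$; once that is in hand, the rest is bookkeeping of scalar constants and matching the two closed-form hypergeometric expressions.
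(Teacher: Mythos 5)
Your proposal is correct and follows essentially the same route as the paper: it identifies $F_{0,\ell}$ with the stated constant times the inverse Fourier transform of $\rmm{Char}(\lrangle{\omega}{\omega}>0)\lrangle{\omega}{\omega}^{2\ell-n+1}K_{0}(4\pi\sqrt{B(\omega)})$ by matching the two hypergeometric closed forms, applies Fourier inversion to get $I_{0}(T;\ell)$, and then invokes \cite[Theorem 3.5]{HMY} to pin down $E_{2,\infty}^{T}$ as the corresponding multiple of $\call{W}_{T}$. The constants check out, and your extra remark about temperedness of the non-Schwartz function is a reasonable point the paper passes over silently.
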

Combining all the local results \Cref{prop Siegel seris split case}, \Cref{prop Siegel series inert case}, \Cref{prop Siegel series unitary ramified case} and \Cref{thm rank 2 Fourier archimedean generalised Whittaker} together,
we obtain the rank $2$ Fourier coefficient of $E_{\ell}(g,s=\ell+1)$ at $g\in \bff{G}(\R)$:
\begin{thm}
    \label{thm rank 2 Fourier coefficient}
    Suppose $n\equiv 2 \modulo 4$, $\ell>n$,
    and $D\equiv 3\modulo 4$.
    For a vector $T\in \bff{V}_{0}$ with $\lrangle{T}{T}\neq 0$ and any $g_{\infty}\in \bff{G}(\R)$, 
    if $\lrangle{T}{T}<0$ or $T\notin \bff{V}_{0}(\Z)$,
    the $T$th Fourier coefficient $E^{T}_{\ell}(g_{\infty},s=\ell+1)$ is zero;
    if $T\in \bff{V}_{0}(\Z)$ and $\lrangle{T}{T}>0$,
    we have:
    \begin{align*}
        E^{T}_{\ell}(g_{\infty},\ell+1)=\frac{(2\ell-n+2)2^{2n+2}D^{\ell+1-\frac{n}{2}}}{(\ell!)^{2}\left|B_{2\ell-n+2}\right|\sigma_{\ell+1-\frac{n}{2}}(D)}\cdot\left(\prod_{p|\lrangle{T}{T}}Q_{T,p}(p^{\ell-\frac{n-1}{2}})\right)\call{W}_{T}(g_{\infty}),
    \end{align*}
    where $B_{k}$ is the $k$th Bernoulli number, and $\sigma_{k}(n)=\sum_{d|n}d^{k}$.
    
    In particular, for any vector $T\in\bff{V}_{0}$ with $\lrangle{T}{T}\neq 0$
    and any $g=g_{f}g_{\infty}\in \bff{M}(\A_{f})\times\bff{G}(\R)$,
    $E_{\ell}^{T}(g_{f}g_{\infty},\ell+1)$ is a rational multiple of $\call{W}_{T}(g_{\infty})$,
    and when $g_{f}=1$ the denominator of this rational number divides $(\ell!)^{2}\left|B_{2\ell-n+2}\right|\sigma_{\ell+1-\frac{n}{2}(D)}$,
    which is independent of $T$.
\end{thm}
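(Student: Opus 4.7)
The plan is to assemble the three ingredients that have already been developed: the Eulerian factorization $E_{\ell,2}^{T}(1,\ell+1)=\prod_{v}E_{2,v}^{T}(\ell+1)$, the local formulas in \Cref{prop Siegel seris split case}, \Cref{prop Siegel series inert case}, and \Cref{prop Siegel series unitary ramified case} at the finite places, and the archimedean identity of \Cref{thm rank 2 Fourier archimedean generalised Whittaker}. First, using \Cref{lemma Eulerian Fourier coefficients}, for anisotropic $T$ we have $E_{\ell}^{T}=E_{\ell,2}^{T}$, and the transformation formula \eqref{eqn tranform of rank 2 Fourier} reduces the question to evaluating $E_{2}^{T}(s=\ell+1)$ at $g=1$. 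The vanishing half of the theorem follows immediately: if $\lrangle{T}{T}<0$ then the archimedean factor is zero by \Cref{thm rank 2 Fourier archimedean generalised Whittaker}, while if $T\notin \bff{V}_{0}(\Z)$ then $T\notin\bff{V}_{0}(\Z_{p})$ for some finite $p$, and the relevant local proposition forces $E_{2,p}^{T}(\ell+1)=0$.

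For the non-vanishing case $T\in\bff{V}_{0}(\Z)$ with $\lrangle{T}{T}>0$, I separate the finite Euler product into unramified primes (split or inert), contributing $(1-p^{n-2\ell-2})Q_{T,p}(p^{(n-1)/2-\ell})$, and odd ramified primes $p\mid D$, contributing $(1-p^{n/2-\ell-1})Q_{T,p}(p^{(n-1)/2-\ell})$. Using the factorization $1-p^{n-2\ell-2}=(1-p^{n/2-\ell-1})(1+p^{n/2-\ell-1})$, the product over all finite places of the ``$(1-\cdots)$'' factors becomes
\[
\prod_{p}(1-p^{n-2\ell-2})\cdot\prod_{p\mid D}\frac{1}{1+p^{n/2-\ell-1}}
= \frac{1}{\zeta(2\ell-n+2)}\cdot\frac{D^{\ell+1-n/2}}{\sigma_{\ell+1-n/2}(D)},
\]
where the last equality uses $(1+p^{n/2-\ell-1})^{-1}=p^{\ell+1-n/2}/(1+p^{\ell+1-n/2})$ together with the multiplicativity of $\sigma_{k}$ on the squarefree integer $D$.

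Next I will apply the functional equation $X^{2k_{p}}Q_{T,p}(X^{-1})=Q_{T,p}(X)$, where $k_{p}=v_{p}(\lrangle{T}{T})$, to swap $Q_{T,p}(p^{(n-1)/2-\ell})=p^{k_{p}(n-1-2\ell)}Q_{T,p}(p^{\ell-(n-1)/2})$. Taking the product over primes and using $\prod_{p}p^{k_{p}}=\lrangle{T}{T}$ yields
\[
\prod_{p}Q_{T,p}(p^{(n-1)/2-\ell})=\lrangle{T}{T}^{\,n-1-2\ell}\prod_{p\mid\lrangle{T}{T}}Q_{T,p}(p^{\ell-(n-1)/2}).
\]
Multiplying by the archimedean factor $\frac{2^{2\ell+n+3}\pi^{2\ell-n+2}\lrangle{T}{T}^{2\ell-n+1}}{(\ell!)^{2}(2\ell-n+1)!}\call{W}_{T}(g_{\infty})$, the two powers of $\lrangle{T}{T}$ cancel. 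Finally, I apply the Euler formula $\zeta(2\ell-n+2)=\frac{(2\pi)^{2\ell-n+2}|B_{2\ell-n+2}|}{2(2\ell-n+2)!}$ (valid since $n$ is even, so $2\ell-n+2$ is a positive even integer), and collect powers of $2$ and $\pi$: the factor $(2\pi)^{-(2\ell-n+2)}$ combines with $\pi^{2\ell-n+2}$ to give $2^{-(2\ell-n+2)}$, and the resulting $2$-power is $2\cdot 2^{-(2\ell-n+2)}\cdot 2^{2\ell+n+3}=2^{2n+2}$, while $(2\ell-n+2)!/(2\ell-n+1)!=2\ell-n+2$. This produces exactly the stated formula. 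The only real subtlety is bookkeeping of the $\lrangle{T}{T}$ cancellation and of the $2$/$\pi$ powers; the second assertion about Eulerian transform to general $g_{f}\in\bff{M}(\A_{f})$ follows by reapplying \eqref{eqn tranform of rank 2 Fourier} and noting that the denominator depends only on $\ell,n,D$.
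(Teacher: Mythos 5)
Your proposal is correct and follows essentially the same route as the paper's own proof: reduce to $g=1$ via the transformation formula \eqref{eqn tranform of rank 2 Fourier}, multiply the local factors from \Cref{prop Siegel seris split case}, \Cref{prop Siegel series inert case}, \Cref{prop Siegel series unitary ramified case} and \Cref{thm rank 2 Fourier archimedean generalised Whittaker}, absorb the ramified $(1+p^{\frac{n}{2}-\ell-1})^{-1}$ factors into $D^{\ell+1-\frac{n}{2}}/\sigma_{\ell+1-\frac{n}{2}}(D)$, use the functional equation of $Q_{T,p}$ to cancel $\lrangle{T}{T}^{2\ell-n+1}$, and evaluate $\pi^{2\ell-n+2}/\zeta(2\ell-n+2)$ by Euler's formula. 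The power-of-two and factorial bookkeeping you carry out checks against the stated constant, so no changes are needed.
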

\begin{proof}
    By our local results,
    we know that the $T$th coefficient of $E_{\ell}(g,s=\ell+1)$ is non-zero only if $T\in\bff{V}_{0}(\Z)$ and $\lrangle{T}{T}>0$,
    and for any $g_{\infty}\in \bff{M}(\R)$ we have:
    \begin{align*}
        E_{\ell}^{T}(g_{\infty},\ell+1)=&\frac{\prod\limits_{p}(1-p^{n-2\ell-2})Q_{T,p}(p^{\frac{n-1}{2}-\ell})}{\prod_{p|D}(1+p^{\frac{n}{2}-\ell-1})}\cdot \frac{2^{2\ell+n+3}\pi^{2\ell-n+2}\lrangle{T}{T}^{2\ell-n+1}}{(\ell!)^{2}(2\ell-n+1)!}\call{W}_{T}(g_{\infty}) \\
        =&\frac{\pi^{2\ell-n+2}}{\zeta(2\ell-n+2)}\cdot \frac{2^{2\ell+n+3}D^{\ell+1-\frac{n}{2}}}{(\ell!)^{2}(2\ell-n+1)!\sigma_{\ell+1-\frac{n}{2}}(D)}\cdot \prod_{p|\lrangle{T}{T}}\frac{Q_{T,p}(p^{\frac{n-1}{2}-\ell})}{p^{(n-1-2\ell)v_{p}(\lrangle{T}{T})}}.
    \end{align*}
    Since $n$ is even, the fraction $\pi^{2\ell-n+2}/\zeta(2\ell-n+2)$ is rational and equals 
    $\frac{(2\ell-n+2)!}{2^{2\ell-n+1}|B_{2\ell-n+2}|}$.
    By the functional equation for $Q_{T,p}$ and the properties of its coefficients,
    we have 
    \[\frac{Q_{T,p}(p^{\frac{n-1}{2}-\ell})}{p^{(n-1-2\ell)v_{p}(\lrangle{T}{T})}}=Q_{T,p}(p^{\ell-\frac{n-1}{2}})\in \Z.\]
    Putting these values into $E_{\ell}^{T}(g_{\infty},\ell+1)$, we obtain the desired formula.
\end{proof}


\section{Lower ranks Fourier coefficients of \texorpdfstring{$E_\ell(g, s=\ell+1)$}{}}
\label{section lower rank Fourier}
In this section, we explicitly compute the rank $0$ and $1$ Fourier coefficients of the weight $\ell$ quaternionic Heisenberg Eisenstein series $E_\ell(g, \ell+1)$,
\emph{i.e.\,} the $T$th coefficients for isotropic $T$.

\subsection{Rank 1 Fourier coefficients}
\label{section rank 1 Fourier coefficients}
For rank $1$ Fourier coefficients,
one needs to calculate 
\[E_{\ell}^{T}(g,s)=E_{\ell,1}^{T}(g,s)=\int_{\bff{N}_{T}(\A)\backslash \bff{N}(\A)}\chi_{T}^{-1}(n)f_{\ell}(\gamma(L_{T})ng,s)dn\]
for any nonzero isotropic vector $T$.

\subsubsection{Non-archimedean components}
\label{section rank 1 Fourier non-archimedean}
For any prime $p$,
we identify $\bff{N}_{T}(\Q_{p})\backslash \bff{N}(\Q_{p})$ with $E_{p}$ via the map $n(v,\lambda)\mapsto -\lrangle{T}{v}$.
For any $m=(z,h)\in \bff{M}(\Q_{p})$,
\begin{align*}
    E_{\ell,p}^{T}(m,s)&=\int_{x\in E_{p}}\int_{t\in E_{p}^{\times}}\psi_{E_{p}}(x)|t|_{E_{p}}^{s}\Phi_{p}(tx\nu(m)^{-1}b_{1}+tT\cdot h)dtdx\\
    &=|\nu(m)|_{E_{p}}^{s}\int_{x\in E_{p}}\int_{t\in E_{p}^{\times}}\psi_{E_{p}}(x)|t|_{E_{p}}^{s}\Phi_{p}(txb_{1}+tzT\cdot h)dtdx\\
    &=|\nu(m)|_{E_{p}}^{s}E_{\ell,p}^{zT\cdot h}(1,s),
\end{align*}
thus it suffices to study $E_{1,p}^{T}:=E_{\ell,p}^{T}(1,s=\ell+1)$.
We have:
\begin{align*}
    E_{1,p}^{T}&=\int_{\substack{x\in E_{p}\\ t\in E_{p}^{\times}}}\psi_{E_{p}}(x)|t|_{E_{p}}^{\ell+1}\Phi_{p}(txb_{1}+tT)dtdx=\int_{\substack{x\in E_{p}\\ t\in E_{p}^{\times}}}\psi_{E_{p}}(x/t)|t|_{E_{p}}^{\ell}\Phi_{p}(xb_{1}+tT)dtdx.
\end{align*}
\begin{defi}\label{def divisor function quadratic}
    For any vector $T\in\bff{V}_{0}$,
    define 
    $\sigma_{E,\ell}(T)=\prod_{\frakk{p}}\sigma_{\frakk{p},\ell}(T)$,
    where $\frakk{p}$ varies over all the non-zero prime ideals of $\oo_{E}$ and 
    \[\sigma_{\frakk{p},\ell}(T):=\sum_{i=0}^{v_{\frakk{p}}(T)}q^{i\ell},\text{ where }q=|\oo_{E}/\frakk{p}|.\]
    Here we take $\sigma_{\frakk{p},\ell}(T)$ to be $0$ if $v_{\frakk{p}}(T)<0$, \emph{i.e.\,}$T\notin\bff{V}_{0}(\Z_{p})$.
\end{defi}
\begin{prop}\label{prop non-arch rank 1 Fourier}
    For any non-zero isotropic vector $T\in\bff{V}_{0}$,
    we have 
    \[\prod_{p\text{ prime}}E_{1,p}^{T}=\sigma_{E,\ell}(T).\]
    In particular, $\prod_{p}E_{1,p}^{T}=0$ if $T\notin \bff{V}_{0}(\Z)$.
\end{prop}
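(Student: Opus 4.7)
The plan is to compute $E_{1,p}^T$ prime-by-prime and then multiply. First I would carry out the inner integral over $x\in E_p$ using the factorization $\Phi_p(xb_1+tT)=\rmm{Char}(x\in\oo_{E_p})\,\rmm{Char}(tT\in\bff{V}_0(\Z_p))$. Since $\psi_{E_p}$ has conductor $\oo_{E_p}$ at every finite place---a point guaranteed by the assumption $D\equiv 3\bmod 4$ (see \Cref{rmk exclude ramified 2})---a short Fourier-theoretic computation yields
\[
    \int_{\oo_{E_p}}\psi_{E_p}(x/t)\,dx \;=\; \rmm{vol}(\oo_{E_p})\cdot\rmm{Char}\bigl(t^{-1}\in\oo_{E_p}\bigr).
\]
Substituting this back reduces $E_{1,p}^T$ to the multiplicative-Haar integral of $|t|_{E_p}^{\ell}$ over $\{t\in E_p^\times : t^{-1}\in\oo_{E_p},\,tT\in\bff{V}_0(\Z_p)\}$.

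Next I would localize at each prime $\frakk{p}$ of $\oo_E$ above $p$, using the factorization $E_p=\prod_{\frakk{p}\mid p}E_{\frakk{p}}$ together with the corresponding decomposition of $\bff{V}_0(\Z_p)$. At each $\frakk{p}$, the lattice condition $tT\in\bff{V}_0(\Z_{\frakk{p}})$ is equivalent to $v_{\frakk{p}}(t)\ge -v_{\frakk{p}}(T)$, while $t^{-1}\in\oo_{E_{\frakk{p}}}$ is equivalent to $v_{\frakk{p}}(t)\le 0$, so the $\frakk{p}$-domain collapses to the finite interval $-v_{\frakk{p}}(T)\le v_{\frakk{p}}(t)\le 0$, which is empty as soon as $v_{\frakk{p}}(T)<0$. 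Slicing by $v_{\frakk{p}}(t)=-k$ for $0\le k\le v_{\frakk{p}}(T)$, each slice $\varpi_{\frakk{p}}^{-k}\oo_{E_{\frakk{p}}}^\times$ has multiplicative volume $1$, and the integrand contributes $|t|_{E_p}^{\ell}=q^{k\ell}$ with $q=|\oo_E/\frakk{p}|$. Summing gives precisely $\sigma_{\frakk{p},\ell}(T)$ from \Cref{def divisor function quadratic}, and in the split case $E_p=E_{\frakk{p}_1}\times E_{\frakk{p}_2}$ the integral factorizes across the two components, so in all cases $E_{1,p}^T=\prod_{\frakk{p}\mid p}\sigma_{\frakk{p},\ell}(T)$.

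Taking the product over all primes $p$ then yields $\prod_p E_{1,p}^T=\prod_{\frakk{p}}\sigma_{\frakk{p},\ell}(T)=\sigma_{E,\ell}(T)$, which is the claimed identity. The final vanishing assertion is automatic from this formula: if $T\notin\bff{V}_0(\Z)$ then $v_{\frakk{p}}(T)<0$ for some $\frakk{p}$, and the corresponding factor vanishes by the convention built into \Cref{def divisor function quadratic}.

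The proof is essentially measure-theoretic bookkeeping rather than a conceptual leap. The only mild delicacy is confirming that the Haar-measure normalizations on $E_p$ and $E_p^\times$ are consistent with those implicit in the definition of $f_{fte}$, so that the inner $x$-integral contributes exactly $1$ and each slice $\varpi_{\frakk{p}}^{-k}\oo_{E_{\frakk{p}}}^\times$ has multiplicative volume $1$; treating the split, inert, and ramified cases in a single uniform framework is possible precisely because $\psi_{E_{\frakk{p}}}$ has conductor $\oo_{E_{\frakk{p}}}$ in each case, which again traces back to the hypothesis $D\equiv 3\bmod 4$ that excludes $p=2$ from being ramified.
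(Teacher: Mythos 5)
Your proposal is correct and follows essentially the same route as the paper: starting from the rewritten integrand $\psi_{E_p}(x/t)|t|_{E_p}^{\ell}\Phi_p(xb_1+tT)$, the $x$-integral over $\oo_{E_p}$ enforces $t^{-1}\in\oo_{E_p}$ via the conductor of $\psi_{E_p}$, and the remaining $t$-integral collapses to the finite geometric sum $\sigma_{\frakk{p},\ell}(T)$ at each prime of $\oo_E$. The only difference is presentational: the paper carries out the split, inert, and ramified cases separately, whereas you package them uniformly by localizing at primes $\frakk{p}\mid p$, which is fine since the conductor of $\psi_{E,\frakk{p}}$ is $\oo_{E_{\frakk{p}}}$ in every case.
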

\begin{proof}
    When $p=\frakk{p}_{1}\frakk{p}_{2}$ is split in $E$,
    we write $T$ as $(T_{1},T_{2})$ with $T_{i}\in \Q_{p}^{n}$,\,$i=1,2$
    such that $v_{p}(T_{i})=v_{\frakk{p}_{i}}(T)$ for $i=1,2$.
    We have:
    \begin{align*}
    E_{1,p}^{T}
    =&\sum_{\substack{r_{1}\geq -v_{\frakk{p}_{1}}(T)\\r_{2}\geq -v_{\frakk{p}_{2}}(T)}}p^{-\ell(r_{1}+r_{2})}\left(\int_{(x_{1},x_{2})\in \Z_{p}\times \Z_{p}}\psi_{p}(x_{1}/p^{r_{1}})\psi_{p}(x_{2}/p^{r_{2}})dx_{1}dx_{2}\right)\\
    =&\sum_{\substack{0\geq r_{1}\geq -v_{\frakk{p}_{1}}(T)\\0\geq r_{2}\geq -v_{\frakk{p}_{2}}(T)}}p^{-\ell(r_{1}+r_{2})}=\sigma_{\frakk{p}_{1},\ell}(T)\sigma_{\frakk{p}_{2},\ell}(T),
\end{align*}
where the second equality follows from the fact that the inner integral over $\Z_{p}\times\Z_{p}$ equals $1$ if and only if the character $(x_{1},x_{2})\mapsto \psi_{p}(x_{1}/p^{r_{1}})\psi_{p}(x_{2}/p^{r_{2}})$ is trivial,
otherwise it vanishes.

When $p$ is inert in $E$,
$E_{p}$ is an unramified quadratic field extension of $\Q_{p}$,
and we have:
\begin{align*}
    E_{1,p}^{T}=\sum_{r\geq -v_{p}(T)}p^{-2r \ell}\int_{x\in \oo_{E_{p}}}\psi_{E_{p}}(x/p^{r})dx=\sum_{0\geq r\geq -v_{p}(T)}p^{-2r\ell}=\sigma_{p,\ell}(T).
\end{align*}

When $p=\frakk{p}^{2}$ is ramified in $E$,
$E_{p}=E_{\frakk{p}}$ is a ramified quadratic field extension of $\Q_{p}$. 
Taking a uniformizer $\varpi$ of $\frakk{p}$, we have 
\begin{align*}
    E_{1,p}^{T}=\sum_{r\geq -v_{\frakk{p}}(T)}p^{-r \ell}\int_{x\in \oo_{E_{\frakk{p}}}}\psi_{E_{\frakk{p}}}(x/\varpi^{r})dx=\sum_{0\geq r\geq -v_{\frakk{p}}(T)}p^{-r\ell}=\sigma_{\frakk{p},\ell}(T).
\end{align*}
Here we use the property that the character $\psi_{E_{\frakk{p}}}$ has conductor $\oo_{E_{\frakk{p}}}$. 

Combining these three cases together, we obtain the desired result.
\end{proof}

\subsubsection{Archimedean components}
\label{section rank 1 Fourier archimedean}
Again, 
we identify $\bff{N}_{T}(\R)\backslash\bff{N}(\R)$ with $\C$ via the map $n(v,\lambda)\mapsto -\lrangle{T}{v}$,
and for any $z\in \C$, we choose an element $n(z)\in\bff{N}(\R)$ corresponding to $z$ under this identification. Recall from Lemma 3.8 that $L_T$ is the line generated by $T$.
\begin{lemma}
    \label{lemma rank 1 archimedean section}
    For any $z\in \C$,
    we have 
    \[\pi^{2\ell+1}f_{\ell,\infty}(\gamma(L_{T})n(z),s=\ell+1)=\frac{(zu_{1}+\beta u_{2})^{\ell}(-\overline{\beta}u_{1}+\overline{z}u_{2})^{\ell}}{(|z|^{2}+|\beta|^{2})^{2\ell+1}(\ell!)^{2}},\]
    where $\beta=\beta(T):=\sqrt{2}\lrangle{T}{u_{2}}$.
\end{lemma}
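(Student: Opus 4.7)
The plan is to mirror the proof of \Cref{lemma rank 2 values of archimedean section}: compute the image of $b_1$ under $\gamma(L_T)n(z)$ in $\bff{V}(\R)$, read off the Iwasawa data $|\nu(p)|$ and $k_+$ by comparing $V_2^+$-components, and substitute into the defining formula for $f_{\ell,\infty}$.

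First, I would fix $\gamma(L_T)\in\bff{G}(\Q)$ satisfying $b_1\gamma(L_T)=T$, and for $z\in\C$ choose a lift $n(z)=n(v_z,0)$ with $-\lrangle{T}{v_z}=z$. Using the formula $w\mapsto -\lrangle{w}{v}b_1+w$ for the action of $n(v,\lambda)$ on $\bff{V}_0$ from \Cref{section unitary group and its Heisenberg parabolic}, one immediately obtains
\[ b_1\gamma(L_T)n(z)=T\cdot n(z)=zb_1+T. \]
Decomposing with $b_1=\tfrac{1}{\sqrt{2}}(u_1+v_n)$ and $T=\lrangle{T}{u_2}u_2+T_n$ with $T_n\in V_n^-$, the $V_2^+$-component of $b_1\gamma(L_T)n(z)$ is $\tfrac{1}{\sqrt{2}}(zu_1+\beta u_2)$, where $\beta=\sqrt{2}\lrangle{T}{u_2}$.

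Next, write $\gamma(L_T)n(z)=pk$ under the Iwasawa decomposition $\bff{G}(\R)=\bff{P}(\R)K_\infty$. The identity $b_1pk=\nu(p)^{-1}b_1k$ combined with the previous display gives $|\nu(p)|^{-2}=|z|^2+|\beta|^2$ upon taking norms. Normalizing $\nu(p)^{-1}$ to be positive real, the $\rmm{U}(V_2^+)$-component $k_+$ of $k$ is represented in the basis $\{u_1,u_2\}$ by the unitary matrix
\[ \frac{1}{\sqrt{|z|^2+|\beta|^2}}\mat{z}{-\overline{\beta}}{\beta}{\overline{z}}, \]
exactly as in the rank-2 case. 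Substituting these data into
\[ \pi^{2\ell+1}f_{\ell,\infty}(\gamma(L_T)n(z),\ell+1)=|\nu(p)|^{2\ell+2}\cdot\frac{(u_1\cdot k_+)^\ell(u_2\cdot k_+)^\ell}{(\ell!)^2} \]
yields the claimed identity.

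The only subtlety, rather than a genuine obstacle, is checking that the right-hand side is independent of the choice of lift $n(z)$ in $\bff{N}_T(\R)\backslash\bff{N}(\R)$. This holds because every element of $\bff{N}_T=L_T^\perp\bff{Z}$ fixes $T\in\bff{V}_0$: the center acts trivially on $\bff{V}_0$, and for $w\in L_T^\perp$ one has $\lrangle{T}{w}=0$. Consequently $b_1\gamma(L_T)n(z)$ depends only on $z$, making the calculation well-defined.
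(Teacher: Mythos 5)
Your proof is correct and follows essentially the same route as the paper: compute $b_{1}\gamma(L_{T})n(z)=zb_{1}+T$, project to $V_{2}^{+}$ to read off $|\nu(p)|^{-2}=|z|^{2}+|\beta|^{2}$ and the matrix of $k_{+}$, and substitute into the defining formula for $f_{\ell,\infty}$. The extra well-definedness remark about the choice of lift in $\bff{N}_{T}(\R)\backslash\bff{N}(\R)$ is a harmless (and correct) addition that the paper leaves implicit.
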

\begin{proof}
    Suppose that $\gamma(L_{T})n(z)=pk$ for $p\in \bff{P}(\R)$ and $k=(k_{+},k_{-})\in K_{\infty}=\rmm{U}(V_{2}^+)\times \rmm{U}(V_{n}^{-})$,
    then one has 
    \[T+zb_{1}=b_{1}\gamma(L_{T})n(z)=b_{1}pk=\nu(p)^{-1}b_{1}k.\]
    Taking the projection to $V_{2}^{+}$ of both sides,
    we obtain that 
    \[zu_{1}+\sqrt{2}\lrangle{T}{u_{2}}u_{2}=\nu(p)^{-1}u_{1}k_{+}.\]
    One can take $\nu(p)$ to be $(|z|^{2}+|\beta|^{2})^{-1/2}$,
    then 
    \[k_{+}=\nu(p)\mat{z}{-\overline{\beta}}{\beta}{\overline{z}}\in\rmm{U}(V_{2}^{+}),\]
    which gives us the desired value of $f_{\ell,\infty}$.
\end{proof}
Using \Cref{lemma rank 1 archimedean section},
the coefficient $I_{0}(T;\ell)$ of $[u_{1}^{\ell}][u_{2}^{\ell}]$ in $E_{1,\infty}^{T}:=E_{1,\infty}^{T}(1,s=\ell+1)$ is equal to 
\begin{align}
    \label{eqn central coefficient rank 1 archimedean}
    \pi^{-2\ell-1}\int_{z\in \C}e^{2\pi i \cdot 2 \mathrm{Re}(z)}\frac{\sum\limits_{k=0}^{\ell}\binom{\ell}{k}^{2}|z|^{2k}(-|\beta|^{2})^{\ell-k}}{(|z|^{2}+|\beta|^{2})^{2\ell+1}}dz.
\end{align}
We set $B=|\beta|^{2}$ and $z=x+iy$, where $x,y\in \R$,
then the integral \eqref{eqn central coefficient rank 1 archimedean} equals 
\begin{align}
    \label{eqn rank 1 central coefficient written as in rank 2}
   \pi^{-2\ell-1}\int_{\R}e^{4\pi i x}\sum_{k=0}^{\ell}\binom{\ell}{k}^{2}(-B)^{\ell-k}\left(\int_{\R}\frac{(y^{2}+x^{2})^{k}}{(y^{2}+x^{2}+B)^{2\ell+1}}dy \right)dx.
\end{align}
By Lemma \ref{lemma integral over center of Heisenberg},
the inner integral over $y\in\R$ equals 
\[\frac{(x^{2}+B)^{k-2\ell-1/2}}{(2\ell)!}\sum_{j=0}^{k}\binom{k}{j}\left(\frac{x^{2}}{x^{2}+B}\right)^{k-j}\Gamma(j+1/2)\Gamma(2\ell-j+1/2).\]
Applying \Cref{prop summation in terms of hypergeometric} to \eqref{eqn rank 1 central coefficient written as in rank 2},
we have 
\begin{align*}
    I_{0}(T;\ell)=(2\pi)^{-2\ell}\sum_{k=0}^{\ell}\frac{(-B)^{k}2^{-2k}(2\ell+2k)!}{(k!)^{2}(\ell+k)!(\ell-k)!}\int_{\R}\frac{e^{4\pi i x}dx}{(x^{2}+B)^{\ell+k+1/2}}.
\end{align*}
Using Basset's integral,
one has 
\[\int_{\R}\frac{e^{iwt}dt}{(t^{2}+z^{2})^{n+1/2}}=\frac{2\sqrt{\pi}}{\Gamma(n+1/2)}\left(\frac{|w|}{2|z|}\right)^{n}K_{n}(|wz|),\,w,z\in \R.\]
Plugging this into $I_{0}(T;\ell)$,
we obtain that:
\begin{align*}
    I_{0}(T;\ell)&=(2\pi)^{-2\ell}\sum_{k=0}^{\ell}\frac{(-B)^{k}2^{-2k}(2\ell+2k)!}{(k!)^{2}(\ell+k)!(\ell-k)!}\cdot \frac{2^{2l+2k+1}(\ell+k)!B^{-(\ell+k)/2}(2\pi)^{\ell+k}K_{\ell+k}(4\pi\sqrt{B})}{(2\ell+2k)!}\\
    &=2\pi^{-2\ell} B^{-\ell}\sum_{k=0}^{\ell}\frac{(-1)^{k}}{(k!)^{2}(\ell-k)!}(2\pi\sqrt{B})^{\ell+k}K_{\ell+k}(4\pi \sqrt{B}).
\end{align*}
\begin{lemma}\label{lemma summation of Bessel functions}
    For any $C>0$,
    we have 
    \[S_{\ell}:=\sum_{k=0}^{\ell}\frac{(-1)^{k}C^{\ell+k}}{(k!)^{2}(\ell-k)!} K_{\ell+k}(2C)=\frac{(-1)^{\ell}C^{2\ell}}{(\ell!)^{2}}K_{0}(2C).\]
\end{lemma}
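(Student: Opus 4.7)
The plan is to reduce this Bessel-function identity to an elementary derivative identity via the integral representation $K_\nu(2C) = \tfrac{1}{2}\int_0^\infty t^{\nu-1} e^{-C(t+1/t)}\,dt$ combined with Rodrigues' formula for the Laguerre polynomials. First, I would substitute the integral representation into $S_\ell$ and interchange the finite sum with the integral. Using $L_\ell(x) = \sum_{k=0}^\ell \binom{\ell}{k}(-x)^k/k!$, the polynomial factor in $t$ collapses to $(Ct)^\ell L_\ell(Ct)/(t\,\ell!)$, and after the substitution $u = Ct$ one obtains
\[
S_\ell \;=\; \frac{1}{2\,\ell!}\int_0^\infty u^{\ell-1}\, L_\ell(u)\, e^{-u - C^2/u}\,du.
\]

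Next, I would apply the Rodrigues formula in the form $L_\ell(u)e^{-u} = \frac{1}{\ell!}\frac{d^\ell}{du^\ell}(u^\ell e^{-u})$ and integrate by parts $\ell$ times, transferring the derivatives onto $u^{\ell-1} e^{-C^2/u}$. All boundary terms vanish: $e^{-C^2/u}$ and each of its derivatives decays super-exponentially as $u \to 0^+$, while $u^\ell e^{-u}$ together with its derivatives decays exponentially as $u \to \infty$. This yields
\[
S_\ell \;=\; \frac{(-1)^\ell}{2(\ell!)^2}\int_0^\infty \left[\frac{d^\ell}{du^\ell}\bigl(u^{\ell-1} e^{-C^2/u}\bigr)\right] u^\ell e^{-u}\,du.
\]

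The main obstacle is the closed-form derivative identity
\[
\frac{d^\ell}{du^\ell}\bigl(u^{\ell-1} e^{-C^2/u}\bigr) \;=\; C^{2\ell}\,u^{-\ell-1}\,e^{-C^2/u},
\]
which I would establish by induction on $\ell$. Setting $h_m(u) := u^{m-1} e^{-C^2/u}$, a direct calculation gives the recursion $h_{m+1}'(u) = m\,h_m(u) + C^2\,h_{m-1}(u)$; differentiating this $m$ further times and invoking the inductive hypothesis produces $h_{m+1}^{(m+1)} = m\,h_m^{(m)} + C^2\,h_{m-1}^{(m)}$, which simplifies to $C^{2m+2}u^{-m-2}e^{-C^2/u}$ after a short cancellation. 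Substituting back into the expression for $S_\ell$ and using the integral representation once more (with $\nu = 0$, via the inverse substitution $t = u/C$) to recognize $\int_0^\infty u^{-1} e^{-u-C^2/u}\,du = 2K_0(2C)$, we obtain the desired equality $S_\ell = \frac{(-1)^\ell C^{2\ell}}{(\ell!)^2} K_0(2C)$.
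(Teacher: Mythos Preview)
Your argument is correct and complete: the integral representation reduces $S_\ell$ to $\frac{1}{2\,\ell!}\int_0^\infty u^{\ell-1}L_\ell(u)e^{-u-C^2/u}\,du$, Rodrigues' formula and $\ell$-fold integration by parts are justified by the stated decay at the endpoints, and the derivative identity $\frac{d^\ell}{du^\ell}\bigl(u^{\ell-1}e^{-C^2/u}\bigr)=C^{2\ell}u^{-\ell-1}e^{-C^2/u}$ follows cleanly from your recursion $h_{m+1}'=m\,h_m+C^2h_{m-1}$ after one further differentiation of the inductive hypothesis for $h_{m-1}^{(m-1)}$.

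The paper takes a quite different and more elementary route: it never unpacks the Bessel functions as integrals, but instead uses only the three-term recurrence $K_{n+1}(z)=K_{n-1}(z)+\tfrac{2n}{z}K_n(z)$ to show directly that $-C^2 S_\ell=(\ell+1)^2 S_{\ell+1}$, and then appeals to the base case $S_0=K_0(2C)$. What the paper's approach buys is brevity and the avoidance of any analytic machinery beyond the standard Bessel recurrence. What your approach buys is a structural explanation: the appearance of the Laguerre polynomial $L_\ell$ makes transparent \emph{why} the sum collapses, and the key derivative identity is an appealing closed-form fact in its own right. Your proof would also generalize more readily to related weighted sums over $K$-Bessel functions where the coefficients arise from other Rodrigues-type formulas.
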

\begin{proof}
    Using the recurrence relation $K_{n+1}(z)=K_{n-1}(z)+\frac{2n}{z}K_{n}(z)$ for K-Bessel functions,
    we can write $-C^{2}S_{\ell}$ as 
    \begin{align*}
        &\sum_{k=0}^{\ell}(-1)^{k+1}\frac{C^{\ell+k+2}}{(k!)^{2}(\ell-k)!}K_{\ell+k}(2C)\\
        =&\sum_{j=1}^{\ell+1}(-1)^{j}\frac{C^{\ell+1+j}}{((j-1)!)^{2}(\ell-j+1)!}\left(K_{\ell+1+j}(2C)-\frac{\ell+j}{C}K_{\ell+j}(2C)\right)\\
        =&\sum_{j=1}^{\ell+1}\frac{(-1)^{j}C^{\ell+1+j}K_{\ell+1+j}(2C)}{((j-1)!)^{2}(\ell-j+1)!}+\sum_{j=0}^{\ell}\frac{(-1)^{j}(\ell+j+1)C^{\ell+1+j}K_{\ell+j+1}(2C)}{(j!)^{2}(\ell-j)!}\\
        =&\frac{(\ell+1)C^{\ell+1}K_{\ell+1}(2C)}{\ell!}+\sum_{j=1}^{\ell}\frac{(-1)^{j}C^{\ell+1+j}K_{\ell+1+j}(2C)}{(j!)^{2}(\ell+1-j)!}(\ell+1)^{2}+(-1)^{\ell+1}\frac{C^{2\ell+2}K_{2\ell+2}(2C)}{(\ell!)^{2}}\\
        =&(\ell+1)^{2}S_{\ell+1}.
    \end{align*}
    Since $S_{0}=K_{0}(2C)$ and $S_{\ell+1}=-\frac{C^{2}}{(\ell+1)^{2}}S_{\ell}$,
    we have the desired result.
\end{proof}
By this lemma,
one has 
\[I_{0}(T;\ell)=2\pi^{-2\ell} B^{-\ell}\cdot\frac{(-1)^{\ell}(2\pi \sqrt{B})^{2\ell}}{(\ell!)^{2}}K_{0}(4\pi\sqrt{B})=\frac{(-1)^{\ell}2^{2\ell+1}}{(\ell!)^{2}}K_{0}(4\pi \sqrt{B}).\]
As a consequence 
\[E_{1,T}^{\infty}=(-1)^{\ell}\frac{2^{2\ell+1}}{(\ell!)^{2}}\call{W}_{T}(1).\]
Combining this result with \Cref{prop non-arch rank 1 Fourier},
we have the following proposition:
\begin{prop}
    \label{prop rank 1 Fourier coefficient}
    Suppose that $\ell>n$. 
    For a non-zero isotropic vector $T\in \bff{V}_{0}$ and $g_{\infty}\in \bff{G}(\R)$,
    if $T\notin\bff{V}_{0}(\Z)$, the Fourier coefficient $E_{\ell}^{T}(g_{\infty},s=\ell+1)$ is zero;
    if $T\in \bff{V}_{0}(\Z)$, we have:
    \begin{align*}
        E_{\ell}^{T}(g_{\infty},\ell+1)=\frac{(-1)^{\ell}2^{2\ell+1}\sigma_{E,\ell}(T)}{(\ell!)^{2}}\call{W}_{T}(g_{\infty}).
    \end{align*}
    
    In particular, for any non-zero vector $T\in \bff{V}_{0}$ with $\lrangle{T}{T}=0$ 
    and any $g=g_{f}g_{\infty}\in \bff{M}(\A_{f})\times\bff{G}(\R)$,
    $E_{\ell}^{T}(g_{f}g_{\infty},\ell+1)$ is a rational multiple of $\call{W}_{T}(g_{\infty})$.
\end{prop}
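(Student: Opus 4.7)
The plan is to combine the Eulerian factorization of the rank 1 Fourier coefficient with the local calculations already carried out in Section 5.1. By \Cref{lemma Eulerian Fourier coefficients}, since $T$ is isotropic and non-zero, $E_\ell^T = E_{\ell,1}^T$, and this adelic integral factors as $\prod_v E_{1,v}^T$ when evaluated at $g=1$. For the non-archimedean places, \Cref{prop non-arch rank 1 Fourier} gives $\prod_p E_{1,p}^T = \sigma_{E,\ell}(T)$, which by \Cref{def divisor function quadratic} vanishes whenever $v_{\frakk{p}}(T) < 0$ for some prime ideal $\frakk{p}$ of $\oo_E$, i.e., whenever $T \notin \bff{V}_0(\Z)$. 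This immediately settles the vanishing assertion.

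For the archimedean component at $g_\infty = 1$, I would carry out the chain of reductions sketched in \Cref{section rank 1 Fourier archimedean}: starting from \Cref{lemma rank 1 archimedean section} for $f_{\ell,\infty}(\gamma(L_T)n(z),\ell+1)$, writing $z = x+iy$ and integrating over $y$ via \Cref{lemma integral over center of Heisenberg}, using the combinatorial identity of \Cref{prop summation in terms of hypergeometric} to collapse the double sum into a single hypergeometric-type sum, and applying Basset's integral to the remaining $x$-variable. The final collapse of the resulting sum of K-Bessel functions via \Cref{lemma summation of Bessel functions} is what produces the clean answer $I_0(T;\ell) = \frac{(-1)^{\ell}2^{2\ell+1}}{(\ell!)^2}K_0(4\pi\sqrt{B(T)})$, which matches the $[u_1^\ell][u_2^\ell]$-coefficient of $\call{W}_T(1)$ (namely $K_0(|\beta_T(1)|) = K_0(4\pi\sqrt{B(T)})$) up to the stated constant.

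To propagate this identity at $g_\infty = 1$ to all $g_\infty \in \bff{G}(\R)$, I would observe that $g_\infty \mapsto E_\ell^T(g_\infty,\ell+1)$ is a $\mbb{V}_\ell$-valued function on $\bff{G}(\R)$ that transforms on the left under $\bff{N}(\R)$ by $\chi_T$, is right $K_\infty$-equivariant, of moderate growth, and is killed by the Schmid operators $\call{D}_\ell^{\pm}$ (inherited from $f_{\ell,\infty}$ by \Cref{prop at some point Eisenstein series is modular form}). The uniqueness of generalized Whittaker functions implicit in \Cref{thm Fourier expansion theorem of quaternionic modular forms} forces it to be a scalar multiple of $\call{W}_T(g_\infty)$, with the scalar determined by matching $[u_1^\ell][u_2^\ell]$-coefficients at the identity. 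Multiplying by the non-archimedean factor $\sigma_{E,\ell}(T)$ then gives the explicit formula in the proposition.

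For the ``in particular'' statement at $g_f g_\infty$ with $g_f \in \bff{M}(\A_f)$, I would invoke the local transformation rule $E_{\ell,p}^T(m,s) = |\nu(m)|_{E_p}^s E_{\ell,p}^{zT\cdot h}(1,s)$ established at the opening of \Cref{section rank 1 Fourier non-archimedean}: this rewrites the value at $g_f g_\infty$ as a rational scalar times the Fourier coefficient at a translated $T$ with $g_f = 1$, which by the main case is a rational multiple of $\call{W}_{zT\cdot h}(g_\infty)$, hence of $\call{W}_T(g_\infty)$ via the $K_\infty \cap \bff{M}(\R)$-equivariance of $\call{W}_T$. The principal obstacle in the whole argument is the archimedean K-Bessel collapsing of \Cref{lemma summation of Bessel functions}; once that identity is in hand, the remainder is bookkeeping across the three splitting behaviors (split, inert, ramified) in \Cref{prop non-arch rank 1 Fourier} together with the Whittaker uniqueness.
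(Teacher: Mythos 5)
Your proposal is correct and follows essentially the same route as the paper: Eulerian factorization via \Cref{lemma Eulerian Fourier coefficients}, the non-archimedean product $\sigma_{E,\ell}(T)$ from \Cref{prop non-arch rank 1 Fourier} (which handles the vanishing for $T\notin\bff{V}_{0}(\Z)$), and the archimedean chain through \Cref{lemma rank 1 archimedean section}, \Cref{lemma integral over center of Heisenberg}, \Cref{prop summation in terms of hypergeometric}, Basset's integral, and \Cref{lemma summation of Bessel functions}. The only difference is that you spell out the Whittaker-uniqueness step that lets one pass from the $[u_{1}^{\ell}][u_{2}^{\ell}]$-coefficient at the identity to the full identity $E_{1,\infty}^{T}=\frac{(-1)^{\ell}2^{2\ell+1}}{(\ell!)^{2}}\call{W}_{T}$, which the paper leaves implicit.
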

\subsection{Computation of constant term}
\label{section constant term of Eisenstein series}
The constant term of $E_{\ell}(g,s=\ell+1)$ consists of three parts:
\begin{itemize}
    \item $E_{\ell,0}(g,s=\ell+1)=f_{\ell}(g,s=\ell+1)$,
    \item $E_{\ell,1}^{0}(g,s=\ell+1)=\sum\limits_{L\in\call{L}_{0}}\int_{\bff{N}_{L}(\A)\backslash \bff{N}(\A)}f_{\ell}(\gamma(L)ng,s=\ell+1)dn$,
    \item $E_{\ell,2}^{0}(g,s=\ell+1)=\int_{\bff{N}(\A)}f_{\ell}(w_{2}ng)dn$.
\end{itemize}
In this section, we show that the last two terms are zero, hence the constant term equals $f_{\ell}(g,\ell+1)$.
\subsubsection{\texorpdfstring{$E_{\ell,0}(g,s=\ell+1)$}{}}
\label{section i=0 part constant term}
\begin{lemma}
    \label{lemma value of constant term}
    For $g\in \bff{P}(\A)$,
    \[E_{\ell,0}(g,s)=f_{\ell}(g,s)=\pi^{-2\ell-1}|\nu(g)|_{\A_{E}}^{s}\zeta_{E}(s)[u_{1}^{\ell}][u_{2}^{\ell}].\]
\end{lemma}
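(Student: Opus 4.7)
The plan is straightforward since this is essentially a bookkeeping computation rather than a genuine argument. The first equality $E_{\ell,0}(g,s)=f_{\ell}(g,s)$ is already stated in \Cref{lemma rank decomposition of Eisenstein series}(1), so only the second equality needs work. I would split $f_{\ell}(g,s)=f_{fte}(g_f,s)f_{\ell,\infty}(g_\infty,s)$ along the factorization $g=g_fg_\infty$ and treat each factor separately.

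For the archimedean factor, I would use the fact that $f_{\ell,\infty}$ is by construction a section of the unnormalized degenerate principal series $\rmm{Ind}_{\bff{P}(\R)}^{\bff{G}(\R)}|\nu|_{\C}^{s}$, so for any $g_\infty\in\bff{P}(\R)$, the defining formula of $f_{\ell,\infty}$ evaluated on the Iwasawa piece $k=1$ gives
\[
f_{\ell,\infty}(g_\infty,s)=\pi^{-2\ell-1}|\nu(g_\infty)|_{E_\infty}^{s}[u_1^\ell][u_2^\ell],
\]
since any right $\bff{N}(\R)$-translate leaves $\nu$ unchanged and the $\bff{M}(\R)$ part produces the character value.

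For the finite factor, the key observation is that $b_1 g_f=\nu(g_f)^{-1}b_1$ for $g_f\in\bff{P}(\A_f)$ (from the explicit $\bff{P}$-action on $\bff{V}$ in \Cref{section unitary group and its Heisenberg parabolic}). Substituting into the definition of $f_{fte}$ and changing variables $u=t\nu(g_f)^{-1}$ on $\A_{E,f}^\times$, I get
\[
f_{fte}(g_f,s)=|\nu(g_f)|_{\A_{E,f}}^{s}\int_{\A_{E,f}^\times}|u|_{\A_{E,f}}^{s}\Phi_f(u\cdot b_1)\,du.
\]
Since $\Phi_p(u b_1)$ is the indicator of $u\in\oo_{E_p}$, the integral factors as a product of local Tate zeta integrals $\int_{\oo_{E_p}\setminus\{0\}}|u|_{E_p}^{s}du_p$ of the trivial character. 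These are standard geometric series: at split primes they give $(1-p^{-s})^{-2}$, at inert primes $(1-p^{-2s})^{-1}$, and at ramified primes $(1-p^{-s})^{-1}$, matching $\zeta_{E,p}(s)$ in each case (with the Haar measure normalized so $\oo_{E_p}^\times$ has volume one). Multiplying over all $p$ gives $f_{fte}(g_f,s)=|\nu(g_f)|_{\A_{E,f}}^{s}\zeta_E(s)$.

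Combining the two factors and using $|\nu(g)|_{\A_E}^{s}=|\nu(g_f)|_{\A_{E,f}}^{s}|\nu(g_\infty)|_{E_\infty}^{s}$ produces the stated formula. There is no real obstacle: the only content is the case-by-case local Tate integral at split, inert, and ramified primes, which is classical.
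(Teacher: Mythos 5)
Your proposal is correct and follows essentially the same route as the paper's own proof: the paper likewise uses $b_{1}g_{f}=\nu(g_{f})^{-1}b_{1}$ to pull $|\nu(g_{f})|_{\A_{E,f}}^{s}$ out of the finite integral, identifies the remaining integral $\int_{\A_{E,f}^{\times}}|t|_{\A_{E,f}}^{s}\Phi_{f}(tb_{1})\,dt$ with $\zeta_{E}(s)$, and combines this with the defining value of $f_{\ell,\infty}$ on $\bff{P}(\R)$. The only difference is that you spell out the local Tate integrals at split, inert, and ramified primes, which the paper leaves implicit.
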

\begin{proof}
    For $g_{f}\in \bff{P}(\A_{f})$,
    we have 
    \begin{align*}
        f_{fte}(g_f,s)=\int_{\A_{E,f}^{\times}}|t|_{\A_{E,f}}^{s}\Phi_{f}(tb_{1}g_{f})dt
        =|\nu(g_{f})|_{\A_{E,f}}^{s}\int_{\A_{E,f}^{\times}}|t|_{\A_{E,f}}^{s}\Phi_{f}(tb_{1})dt.
    \end{align*}
    Thus, the non-archimedean contribution is $|\nu(g_{f})|_{E}^{s}\zeta_{E}(s)$.
    Combining this with
    \[f_{\ell,\infty}(g_{\infty},s)=\pi^{-2\ell-1}|\nu(g_{\infty})|_{\C}^{s}[u_{1}^{\ell}][u_{2}^{\ell}],\]
    we obtain the desired identity.
\end{proof}
\subsubsection{\texorpdfstring{$E_{\ell,1}^{0}(g,s=\ell+1)$}{}}
\label{section i=1 part constant term}
Let $L_{0}$ be the isotropic line spanned by $c_{1}\in \bff{V}_{0}$.
Define $\bff{P}_{0}$ to be the stabilizer of $L_{0}$ in $\bff{U}(\bff{V}_{0})$,
which is a parabolic subgroup of $\bff{M}$.
Denote the similitude character of $\bff{P}_{0}$ by $\lambda$,
\emph{i.e.\,}$c_{1}g=\lambda(g)^{-1}v_{0}$ for any $g\in\bff{P}_{0}$.
We rewrite $E_{\ell,1}^{0}(g,s)$ as:
\begin{align*}
    E_{\ell,1}^{0}(g,s)&=\sum_{L\in \call{L}_{0}}\int_{\bff{N}_{L}(\A)\backslash \bff{N}(\A)}f_{\ell}(\gamma(L)ng,s)dn
    =\sum_{\gamma\in \bff{P}_{0}(\Q)\backslash\bff{M}(\Q)}\int_{\bff{N}_{L_{0}}(\A)\backslash \bff{N}(\A)}f_{\ell}(w_{1}\gamma ng,s)dn.
\end{align*}
Set $f_{0}(g,s)=\int_{\bff{N}_{L_{0}}(\A)\backslash \bff{N}(\A)}f_{\ell}(w_{1}ng,s)dn,$
then for $\rmm{Re}(s)\gg 0$,
\[E_{\ell,1}^{0}(g,s)=\sum_{\gamma\in\bff{P}_{0}(\Q)\backslash \bff{M}(\Q)}f_{0}(\gamma g,s),\]
and when restricted to $g\in \bff{M}(\A)$ it defines an Eisenstein series on $\bff{M}$.
Since $f_{0}$ is defined by an Eulerian integral,
we consider its archimedean part 
\[f_{0,\infty}(g,s)=\int_{\bff{N}_{L_{0}}(\R)\backslash\bff{N}(\R)}f_{\ell,\infty}(w_{1}ng,s)dn.\]
The following result shows that when $s=\ell+1$ this section is identically zero,
thus the Eisenstein series associated with $f_{0}$ has no contribution to the constant term of $E_{\ell}(g,s=\ell+1)$:
    \begin{prop}\label{prop vanishing of i=1 contribution to constant term}
    For any $g\in \bff{M}(\R)$, $f_{0,\infty}(g,s=\ell+1)=0$.
   \end{prop}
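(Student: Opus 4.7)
The plan is to reduce to evaluating $f_{0,\infty}(1, \ell+1)$ via a transformation law, and then show this value vanishes by recognising the resulting scalar integral as a Legendre-polynomial orthogonality.

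First, I would verify that $f_{0,\infty}|_{\bff{M}(\R)}$ is a section of an induced representation on $\bff{M}(\R)$, so that it is determined by its value at the identity. For $p\in\bff{P}_0(\R)$, the relations $b_1\cdot w_1=c_1$ and $c_1\cdot p=\lambda(p)^{-1}c_1$ give $b_1\cdot w_1pw_1^{-1}=\lambda(p)^{-1}b_1$, so $w_1pw_1^{-1}\in \bff{P}(\R)$ with similitude $\lambda(p)$. A change of variable $n\mapsto pnp^{-1}$ in the defining integral then yields $f_{0,\infty}(pg,s)=\chi_s(p)f_{0,\infty}(g,s)$ for a suitable character $\chi_s$ of $\bff{P}_0(\R)$. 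Combined with right $K_\infty$-equivariance and the Iwasawa decomposition $\bff{M}(\R)=\bff{P}_0(\R)\bigl(K_\infty\cap\bff{M}(\R)\bigr)$, this reduces the problem to showing $f_{0,\infty}(1,\ell+1)=0$.

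To evaluate $f_{0,\infty}(1,\ell+1)$, I would parametrize $\bff{N}_{L_0}(\R)\backslash\bff{N}(\R)\simeq\C$ by $w\mapsto n(wc_2,0)$, and compute $b_1\cdot w_1n(wc_2,0)=-\overline{w}b_1+c_1$. Following the Iwasawa analysis of \Cref{lemma rank 2 values of archimedean section}, this yields $|\nu(p)|^{-2}=1+|w|^2$ and a unitary $k_+\in\rmm{U}(V_2^{+})$ with $\det(k_+)=-1$, giving
\[\pi^{2\ell+1}f_{\ell,\infty}\bigl(w_1n(wc_2,0),\ell+1\bigr)=(-1)^\ell\,\frac{(-\overline{w}u_1+u_2)^\ell(u_1+wu_2)^\ell}{(1+|w|^2)^{2\ell+1}(\ell!)^2},\]
where the sign $(-1)^\ell$ arises from the $\det^{-\ell}$ twist in $\mbb{V}_\ell$. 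Expanding the two factors binomially and integrating in $w\in\C$, rotational $\rmm{U}(1)$-invariance kills every cross term, leaving only diagonal contributions $i=j$. Hence the result is a scalar multiple of $[u_1^\ell][u_2^\ell]$, with scalar proportional to
\[S:=\sum_{i=0}^{\ell}(-1)^i\binom{\ell}{i}^2\frac{i!(2\ell-i-1)!}{(2\ell)!}.\]

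The final step is to show $S=0$. Writing the Beta value as $\frac{i!(2\ell-i-1)!}{(2\ell)!}=\int_0^1 x^i(1-x)^{2\ell-i-1}\,dx$ and using the identity $\sum_i\binom{\ell}{i}^2 y^i=(1-y)^\ell P_\ell\!\bigl(\tfrac{1+y}{1-y}\bigr)$ (obtained from Pfaff's transformation applied to ${}_2F_1(-\ell,-\ell;1;y)$), the substitution $u=1-2x$ converts $S$ into $2^{-\ell}\int_{-1}^{1}(1+u)^{\ell-1}P_\ell(u)\,du$, which vanishes because $(1+u)^{\ell-1}$ has degree strictly less than $\ell$ and $P_\ell$ is orthogonal on $[-1,1]$ to all polynomials of lower degree. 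Thus $f_{0,\infty}(1,\ell+1)=0$, as required. The main obstacle is precisely this combinatorial identity: every other step is a direct adaptation of the rank $2$ archimedean analysis done earlier in the paper, but the vanishing of $S$ requires the Pfaff–Legendre recognition, which is the crucial insight.
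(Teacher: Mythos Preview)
Your proof is correct, and the overall shape matches the paper's: both reduce to a one-variable integral over $\C$, use rotational invariance to kill all but the $[u_1^\ell][u_2^\ell]$ component, and are left with a finite alternating sum that must be shown to vanish. Your reduction to $g=1$ via the section property is the same as the paper's computation at general $g\in\bff{P}_0(\R)$ followed by the change of variable $w=z\lambda/\nu$; these are simply the two orders of doing the same thing.

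The genuine difference is in how you kill the residual sum $S=\sum_{i=0}^{\ell}(-1)^i\binom{\ell}{i}^2\frac{i!(2\ell-i-1)!}{(2\ell)!}$. The paper substitutes $j=\ell-i$, recognises this (up to a nonzero factor) as ${}_2F_1(\ell,-\ell;1;1)$, and invokes the Gauss/Chu--Vandermonde evaluation $\Gamma(1)\Gamma(1)/\bigl(\Gamma(1-\ell)\Gamma(1+\ell)\bigr)=0$ for $\ell\ge 1$. Your route---Beta integral, then Pfaff to reach the Legendre polynomial, then orthogonality of $P_\ell$ against $(1+u)^{\ell-1}$---is longer but perfectly valid, and has the pleasant feature of explaining the vanishing as a degree count rather than as a $\Gamma$-pole. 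Either argument is self-contained; the paper's is one line once you know the hypergeometric evaluation at $1$.

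One small quibble: your justification of the sign $(-1)^\ell$ as coming from ``$\det(k_+)=-1$'' is not quite right. With the natural choice of Iwasawa factor (the one in Lemma~\ref{lemma rank 1 archimedean section} specialised to $T=c_1$) one has $\det(k_+)=1$; the $(-1)^\ell$ instead arises algebraically from $(-u_1-wu_2)^\ell=(-1)^\ell(u_1+wu_2)^\ell$ after passing from the paper's coordinate $z=-\overline{w}$ to your coordinate $w$. The formula you wrote is correct regardless, so this does not affect the argument.
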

   \begin{proof}
    We identify $\bff{N}_{L_{0}}(\R)\backslash \bff{N}(\R)$ with $\C$ via the map $n(v,\lambda)\mapsto-\lrangle{c_{1}}{v}$,
    and for any $z\in\C$, we choose an element $n(z)\in \bff{N}(\R)$ under this identification.

    It suffices to check the vanishing for $g\in \bff{P}_{0}(\R)$.
    Similarly to \Cref{lemma rank 1 archimedean section},
    one has 
    \[\pi^{-2\ell-1}f_{\ell,\infty}(w_{1}n(z)g)=\frac{(z\nu(g)^{-1}u_{1}+\lambda(g)^{-1}u_{2})^{\ell}(-\overline{\lambda(g)}^{-1}u_{1}+\overline{z\nu(g)^{-1}}u_{2})^{\ell}}{(|\lambda(g)|^{-2}+|z|^{2}|\nu(g)|^{-2})^{2\ell+1}(\ell!)^{2}}.\]
    From now on we write $\lambda(g)$ (\emph{resp.\,}$\nu(g)$) as $\lambda$ (\emph{resp.\,}$\nu$).

    The coefficient of $[u_{1}^{\ell-v}][u_{2}^{\ell+v}]$ in $\pi^{-2\ell-1}f_{\ell,\infty}(w_{1}n(z)g)$ is 
    \begin{align*}
        &\frac{(\ell-v)!(\ell+v)!}{(\ell!)^{2}(|\lambda|^{-2}+|z|^{2}|\nu|^{-2})^{2\ell+1}}\sum_{\substack{0\leq i,j\leq \ell\\ i+j=\ell-v}}\binom{\ell}{i}\binom{\ell}{j}z^{i}\nu^{-i}\lambda^{-(\ell-i)}(-\overline{\lambda})^{-j}(\overline{z\nu^{-1}})^{\ell-j}\\
        =&\frac{(\ell-v)!(\ell+v)!|\nu|^{-2\ell}\nu^{v}\lambda^{-v}|z|^{2\ell}z^{-v}}{(\ell!)^{2}(|\lambda|^{-2}+|z|^{2}|\nu|^{-2})^{2\ell+1}}\sum_{j=\max(0,-v)}^{\min(\ell,\ell-v)}(-1)^{j}\binom{\ell}{j}\binom{\ell}{j+v}|\nu|^{2j}|\lambda|^{-2j}|z|^{-2j},
    \end{align*}
    and the corresponding coefficient in $\pi^{-2\ell-1}f_{0,\infty}(g,s)$ is the integral of this function over $\C$,
    with respect to the area element $d_{A}z:=dxdy=\frac{i}{2}dzd\overline{z}$.
    Set $w=z\lambda/\nu$,
    then we have $d_{A}w=|\lambda/\nu|^{2}d_{A}z$ 
    and this function equals 
    \[\frac{(\ell-v)!(\ell+v)!|\lambda|^{2\ell+2}w^{-v}|w|^{2\ell}}{(\ell!)^{2}(1+|w|^{2})^{2\ell+1}}\sum_{j=\max(0,-v)}^{\min(\ell,\ell-v)}(-1)^{j}\binom{\ell}{j}\binom{\ell}{j+v}|w|^{-2j}.\]
    Hence, the coefficient of $[u_{1}^{\ell-v}][u_{2}^{\ell+v}]$ in $\pi^{-2\ell-1}f_{0,\infty}(g,s=\ell+1)$ equals
    \begin{equation}
        \frac{(\ell-v)!(\ell+v)!}{(\ell!)^{2}}|\lambda|^{2\ell}|\nu|^{2}\sum_{j=\max(0,-v)}^{\min(\ell,\ell-v)}(-1)^{j}\binom{\ell}{j}\binom{\ell}{j+v}\left(\int_{\C}\frac{w^{-v}|w|^{2\ell-2j}}{(1+|w|^{2})^{2\ell+1}}d_{A}w\right)
    \end{equation}
    Notice that the inner integral is nonzero only when $v=0$,
    and we have 
    \[\int_{\C}\frac{|w|^{2\ell-2j}}{(1+|w|^{2})^{2\ell+1}}d_{A}w=\pi\frac{(\ell-j)!(\ell+j-1)!}{(2\ell)!},\,j=0,1,\ldots,\ell.\]
    So we only need to consider the coefficient of $[u_{1}^{\ell}][u_{2}^{\ell}]$,
    which is equal to 
    \begin{align*}
        \frac{\pi|\lambda|^{2\ell}|\nu|^{2}}{(2\ell)!}\sum_{j=0}^{\ell}(-1)^{j}\binom{\ell}{j}^{2}(\ell-j)!(\ell+j-1)!
        =\frac{\pi (\ell!)^{2} |\lambda|^{2\ell}|\nu|^{2}}{\ell(2\ell)!}{_{2}F_{1}}(\ell,-\ell;1;1).    
    \end{align*}
    The hypergeometric value ${_{2}F_{1}}(\ell,-\ell;1;1)$ equals $0$ when $\ell\geq 1$.
   \end{proof}
  
\subsubsection{\texorpdfstring{$E_{\ell,2}^{0}(g,s=\ell+1)$}{}}
\label{section i=2 part constant term}
The vanishing of this term when $\ell>n$ can be proved either by generalizing our calculation in \Cref{section archimedean place rank 2 Fourier infinite part} to the coefficient of any $[u_{1}^{\ell-v}][u_{2}^{\ell+v}]$ in $E_{2,\infty}^{T}$,
or by a similar argument to \cite[Proposition 4.1.2]{Po}.
Here we choose to give a ``softer'' proof:
\begin{prop}\label{prop no contribution of i=2 to constant term}
    When $\ell>n$, 
    $E_{\ell,2}^{0}(g,s=\ell+1)$ is identically zero. 
\end{prop}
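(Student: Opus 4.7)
The plan is to establish the vanishing in three steps: first reducing, via invariance considerations, to the statement that the archimedean integral $E_{\ell,2,\infty}^{0}(1, \ell+1)$ vanishes as a vector in $\mbb{V}_{\ell}$; then extracting this vanishing from the rank 2 archimedean computation already performed in \Cref{section archimedean place rank 2 Fourier infinite part}; and finally propagating the vanishing to all of $\bff{G}(\A)$ by the usual equivariance.

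First I would argue that $E_{\ell,2,\infty}^{0}(1, s)$ is always a scalar multiple of $[u_{1}^{\ell}][u_{2}^{\ell}]$. For $k\in K_{\infty}^{M}:=K_{\infty}\cap \bff{M}(\R)$, the substitution $n\mapsto knk^{-1}$ in the defining integral (together with the right $K_{\infty}$-equivariance of $f_{\ell,\infty}$) yields $E_{\ell,2,\infty}^{0}(k,s)=E_{\ell,2,\infty}^{0}(1,s)\cdot k$, while the transformation rule \eqref{eqn tranform of rank 2 Fourier} specialised to $T=0$ gives $E_{\ell,2,\infty}^{0}(k,s)=|\nu(k)|_{\C}^{n+1-s}E_{\ell,2,\infty}^{0}(1,s)=E_{\ell,2,\infty}^{0}(1,s)$, since $|\nu(k)|=1$ on the compact group $K_{\infty}^{M}$. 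Hence $E_{\ell,2,\infty}^{0}(1,s)\in \mbb{V}_{\ell}^{K_{\infty}^{M}}$. A direct weight computation on the basis $\{[u_{1}^{\ell-v}][u_{2}^{\ell+v}]\}$ under the diagonal torus $\mathbb{S}^{1}\times\rmm{U}(1)\subset K_{\infty}^{M}$ (which acts on $V_{2}^{+}$ as $\rmm{diag}(z^{-1},w)$) shows that each basis vector has weight $(v,v)$ after twisting by $\det^{-\ell}$, so this invariant subspace is one-dimensional and spanned by $[u_{1}^{\ell}][u_{2}^{\ell}]$.

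Next I would observe that the coefficient of $[u_{1}^{\ell}][u_{2}^{\ell}]$ in $E_{\ell,2,\infty}^{0}(1,\ell+1)$ is precisely $I_{0}(0;\ell)$ in the notation of \Cref{section archimedean place rank 2 Fourier infinite part}, and by \Cref{prop archimedean rank 2 Fourier} it equals $\widehat{F_{0,\ell}}(0)$. This integral converges absolutely when $\ell>n$ since $|F_{0,\ell}(v)|\lesssim (A(v)+B(v))^{-\ell-1/2}$ decays like $|v|^{-4\ell-2}$ on $V_{0}$ (real dimension $2n$). Since $F_{0,\ell}\in L^{1}(V_{0})$, its Fourier transform is continuous on $V_{0}$, and by \Cref{thm rank 2 Fourier archimedean generalised Whittaker} it must agree with the continuous extension of $T\mapsto \frac{2^{2\ell+n+3}\pi^{2\ell-n+2}}{(\ell!)^{2}(2\ell-n+1)!}\rmm{Char}(\lrangle{T}{T}>0)\lrangle{T}{T}^{2\ell-n+1}K_{0}(|\beta_{T}(1)|)$. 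As $T\to 0$, the polynomial factor $\lrangle{T}{T}^{2\ell-n+1}$ (with exponent positive, since $\ell>n$) dominates the at-worst-logarithmic singularity of $K_{0}$ from every direction in the positive cone, and the characteristic function suppresses the complement, so the continuous extension takes value $0$ at the origin. Therefore $\widehat{F_{0,\ell}}(0)=0$, and combined with the first step we obtain $E_{\ell,2,\infty}^{0}(1,\ell+1)=0$ in $\mbb{V}_{\ell}$.

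Finally, the vanishing of $E_{\ell,2}^{0}(g,\ell+1)$ on all of $\bff{G}(\A)$ follows from the Iwasawa decomposition $\bff{G}(\A)=\bff{P}(\A)\cdot K_{\infty}\prod_{p}K_{p}$: by left $\bff{N}(\A)$-invariance, the $\bff{M}(\A)$-equivariance \eqref{eqn tranform of rank 2 Fourier}, the right $K_{\infty}$-equivariance, and the fact that the finite-adelic factor $E_{\ell,2,f}^{0}(k_{f},\ell+1)$ (convergent since $\ell>n$) merely rescales the archimedean vector, every value of $E_{\ell,2}^{0}(g,\ell+1)$ is a scalar translate of $E_{\ell,2,\infty}^{0}(1,\ell+1)=0$. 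The only delicate point in the argument is the continuity-of-Fourier-transform step at the origin, which is straightforward once \Cref{thm rank 2 Fourier archimedean generalised Whittaker} is in hand; the reduction to a one-dimensional invariant space via $K_{\infty}^{M}$-invariance is what makes this "softer" than expanding all basis coefficients of $E_{2,\infty}^{T}$ by hand as in \Cref{prop archimedean rank 2 Fourier}.
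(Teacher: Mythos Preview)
Your proof is correct and takes a genuinely different route from the paper's. The paper argues by comparing transformation laws under the one-parameter subgroup $m(r)=(r,\mathrm{id})\in\bff{M}(\R)$: on one hand, the differential equations of \cite[Proposition~3.10]{HMY} force every nonzero component $f_{v}$ of the full archimedean constant term $E_{\ell,\infty}^{0}$ to scale as $r^{2\ell+2}$ or $r^{\ell+2}$ (and this is inherited by $E_{\ell,2,\infty}^{0}$ once the $i=0,1$ pieces are accounted for via \Cref{lemma value of constant term} and \Cref{prop vanishing of i=1 contribution to constant term}); on the other hand, \eqref{eqn tranform of rank 2 Fourier} forces $E_{\ell,2,\infty}^{0}$ to scale as $r^{2n-2\ell}$, and for $\ell>n$ these exponents are incompatible. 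Your argument instead reduces via $K_{\infty}^{M}$-invariance to the single coefficient $I_{0}(0;\ell)=\widehat{F_{0,\ell}}(0)$ and then reads off its vanishing from continuity of $\widehat{F_{0,\ell}}$ together with the explicit formula of \Cref{thm rank 2 Fourier archimedean generalised Whittaker}. The paper's route is cleaner in that it never re-enters the explicit rank-2 archimedean computation and uses no limiting argument, while yours is more self-contained in that it avoids the Schmid-operator machinery. One small correction: your pointwise decay claim $(A+B)^{-\ell-1/2}\sim |v|^{-4\ell-2}$ fails near the null cone of $V_{0}$, where $A(v)=1$ and the decay is only like $|v|^{-2\ell-1}$; the integrability $F_{0,\ell}\in L^{1}(V_{0})$ that you actually need is nonetheless immediate from the absolute convergence of the intertwining integral $\int_{\bff{N}(\R)}|f_{\ell,\infty}(w_{2}n,\ell+1)|\,dn$ for $\ell>n$.
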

\begin{proof}
    It suffices to prove the vanishing of $E_{\ell,2,\infty}^{0}(m,s=\ell+1)$ for $m\in \bff{M}(\R)$.
    We first look at the archimedean part $E_{\ell,\infty}^{0}(m,s=\ell+1)$ of the constant term $E_{\ell}^{0}(m,s=\ell+1)$,
    and write it as 
    \[E_{\ell,\infty}^{0}(m,s=\ell+1)=\sum_{v=-\ell}^{\ell}f_{v}(m)[u_{1}^{\ell-v}][u_{2}^{\ell+v}].\]
    As in the proof of \Cref{prop at some point Eisenstein series is modular form},
    we write $m=(z,h)\in \bff{M}(\R)$ as $m=(h,r,\theta)$ so that $z=re^{i\theta},\,r\in\R_{>0}$ and $\theta\in[0,2\pi)$. 
    For any $r\in \R_{>0}$,
    denote by $m(r)$ the element $(\mathrm{id},r,0)\in\bff{M}(\R)$.
    Applying \cite[Proposition 3.10]{HMY} to $E_{\ell,\infty}^{0}$,
    we have:
    \begin{align*}
        (r\partial r -2(\ell+1)-v)f_{v}(m)&=0,\,v=-\ell,\ldots,\ell-1,\\
        (r\partial r -2(\ell+1)+v)f_{v}(m)&=0,\,v=-\ell+1,\ldots,\ell.
    \end{align*}
    These equations imply that $f_{v}\equiv 0$ except for $v=0$ or $\pm \ell$,
    and the three non-zero coefficients satisfy
    $(r\partial r-2(\ell+1))f_{0}=0$ and $(r\partial r-\ell-2)f_{\pm\ell}=0$.
    Hence, $f_{0}$ (\emph{resp.\,}$f_{\pm \ell}$) is the product of a function of $h,\theta$ with $r^{2\ell+2}$ (\emph{resp.\,}$r^{\ell+2}$).
    Combining this with \Cref{lemma value of constant term} and \Cref{prop vanishing of i=1 contribution to constant term},
    the coefficients of $E_{\ell,2,\infty}^{0}$ transform by $r^{2\ell+2}$ and $r^{\ell+2}$ under the left translation $m\mapsto m(r)m$ for $r>0$.

    On the other hand, by \eqref{eqn tranform of rank 2 Fourier},
    we have 
    \[E_{\ell,2,\infty}^{0}(m(r)m,s=\ell+1)=r^{2n-2\ell}\cdot E_{\ell,2,\infty}^{0}(m,s=\ell+1).\]
    Thus $E_{\ell,2,\infty}^{0}(g,s=\ell+1)$ must transform by $r^{2n-2\ell}$.
    However, when $\ell>n$,
    $2n-2\ell$ is equal to neither $\ell+2$ nor $2\ell+2$,
    so $E_{\ell,2,\infty}^{0}(m,s=\ell+1)$ is identically zero on $\bff{M}(\R)$.
\end{proof}
In conclusion,
we have shown the following:
\begin{prop}\label{prop constant term}
    For any $g\in \bff{P}(\A)$, the constant term $E^{0}(g,s=\ell+1)$ of the Eisenstein series $E_{\ell}(g,s=\ell+1)$ is equal to 
    \[f_{\ell}(g,s=\ell+1)=|\nu(g)|_{\A_{E}}^{\ell+1}\frac{\zeta_{E}(\ell+1)}{\pi^{2\ell+1}}[u_{1}^{\ell}][u_{2}^{\ell}].\]
\end{prop}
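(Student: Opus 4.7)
The plan is to combine the three earlier intermediate results obtained in this section. Using the Bruhat decomposition from \Cref{lemma double coset of parabolic of unitary groups} and the resulting splitting $E_{\ell}(g,s)=E_{\ell,0}(g,s)+E_{\ell,1}(g,s)+E_{\ell,2}(g,s)$ of \Cref{lemma rank decomposition of Eisenstein series}, and noting that integration against $dn$ over $[\bff{N}]$ commutes with a finite sum, I would write
\[
E^{0}(g,s=\ell+1)=E_{\ell,0}^{0}(g,s=\ell+1)+E_{\ell,1}^{0}(g,s=\ell+1)+E_{\ell,2}^{0}(g,s=\ell+1),
\]
and then handle each piece separately.

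For the $i=0$ piece, since $f_{\ell}(\,\cdot\,,s)$ is a section of $\rmm{Ind}_{\bff{P}(\A)}^{\bff{G}(\A)}|\nu|_{\A_{E}}^{s}$ it is in particular left $\bff{N}(\A)$-invariant, so with the standard Haar measure on $[\bff{N}]$ of total mass one we get $E_{\ell,0}^{0}(g,s)=\int_{[\bff{N}]}f_{\ell}(ng,s)\,dn=f_{\ell}(g,s)$. \Cref{lemma value of constant term} then evaluates this to $\pi^{-2\ell-1}|\nu(g)|_{\A_{E}}^{s}\zeta_{E}(s)[u_{1}^{\ell}][u_{2}^{\ell}]$, which at $s=\ell+1$ is exactly the right-hand side of the proposition.

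For the $i=1$ piece, \Cref{prop vanishing of i=1 contribution to constant term} shows that the archimedean section $f_{0,\infty}(g,s=\ell+1)$ appearing in the Eulerian factorization of $f_{0}(g,s=\ell+1)$ vanishes identically on $\bff{M}(\R)$; since the global section $f_{0}$ is a product of its archimedean and non-archimedean components, the entire Eisenstein series $E_{\ell,1}^{0}(g,s=\ell+1)=\sum_{\gamma\in\bff{P}_{0}(\Q)\backslash\bff{M}(\Q)}f_{0}(\gamma g,s=\ell+1)$ is identically zero. For the $i=2$ piece, \Cref{prop no contribution of i=2 to constant term} gives $E_{\ell,2}^{0}(g,s=\ell+1)\equiv 0$ when $\ell>n$, using the weight-type differential equations from \cite[Proposition 3.10]{HMY} together with the transformation law \eqref{eqn tranform of rank 2 Fourier} to rule out the required $r$-degree of homogeneity.

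Putting the three contributions together gives the claimed identity. In fact there is no real obstacle here: all of the substantive analysis (the explicit evaluation of $f_{\ell}$, the vanishing of the hypergeometric factor $_{2}F_{1}(\ell,-\ell;1;1)$ for the $i=1$ term, and the scaling-weight argument for the $i=2$ term) has already been carried out in the preceding three results, so the proof of \Cref{prop constant term} is essentially a one-line assembly, and the only thing to double-check is that the normalizations of Haar measures on $[\bff{N}]$ and on the quotients $\bff{N}_{L}(\A)\backslash\bff{N}(\A)$ are consistent across the three pieces.
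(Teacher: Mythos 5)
Your proposal is correct and follows exactly the paper's own route: the paper proves \Cref{prop constant term} by the same three-way decomposition, invoking \Cref{lemma value of constant term} for the $i=0$ piece, the vanishing of $f_{0,\infty}(\cdot,s=\ell+1)$ from \Cref{prop vanishing of i=1 contribution to constant term} for the $i=1$ piece, and \Cref{prop no contribution of i=2 to constant term} for the $i=2$ piece. Nothing essential differs from the paper's argument.
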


\subsection{The full Fourier expansion of \texorpdfstring{$E_{\ell}(g,s=\ell+1)$}{PDFstring}}
\label{section full Fourier expansion}
Combining \Cref{thm rank 2 Fourier coefficient}, \Cref{prop rank 1 Fourier coefficient} and \Cref{prop constant term} together,
we obtain our main theorem:
\begin{thm}\label{thm full Fourier expansion}
Suppose $\ell> n$, $D\equiv 3\modulo 4$ and $n\equiv 2\modulo 4$. 
The constant term along $\bff{Z}$ of the Eisenstein series   
$E_\ell(g, s=\ell+1)$ has the Fourier expansion 
\begin{align*}
   E_{\ell,0}(g)+\sum_{\substack{0\neq T\in\bff{V}_{0}\\\lrangle{T}{T}\geq 0}}a_{T}(g_{f})\call{W}_{T}(g_{\infty}),\,g=g_{f}g_{\infty}\in\bff{G}(\A_{f})\times \bff{G}(\R),
\end{align*}
where the Fourier coefficients satisfy:
\begin{itemize}
    \item The locally constant functions $a_{T}(g_{f})$ are valued in $\Q$ when restricted to $g_{f}\in\bff{M}(\A_{f})$.
    \item The $\bff{N}$-constant term $E_{\ell,0}(g)$ is a rational multiple of $|\nu(g)|_{\A_{E}}^{\ell+1}\frac{\zeta_{E}(\ell+1)}{\pi^{2\ell+1}}[u_{1}^{n}][u_{2}^{n}]$ when restricted to $g\in \bff{M}(\A)$.
\end{itemize}
Moreover, when $g_{f}=1$, 
$E_{\ell}(g_{\infty},s=\ell+1)$ have the following explicit Fourier expansion:
\begin{align*}
   E_{\ell,0}(g_\infty)+C_{\ell}\sum_{0\neq T\in\bff{V}_{0}(\Z)\atop \lrangle{T}{T}=0}\sigma_{E,\ell}(T)\call{W}_{T}(g_{\infty})
    +D_{n,\ell}\sum_{T\in\bff{V}_{0}(\Z)\atop \lrangle{T}{T}>0}\left(\prod_{p|\lrangle{T}{T}}Q_{T,p}(p^{\ell-\frac{n-1}{2}})\right)\call{W}_{T}(g_{\infty}),
\end{align*}
where
\[C_{\ell}=\frac{(-1)^{\ell}2^{2\ell+1}}{(\ell!)^{2}}\quad \text{and} \quad D_{n,\ell}=\frac{2^{2n+2}(2\ell-n+2)D^{\ell+1-\frac{n}{2}}}{(\ell!)^{2}\left|B_{2\ell-n+2}\right|\sigma_{\ell+1-\frac{n}{2}}(D)} \in \Bbb Q.\]
\end{thm}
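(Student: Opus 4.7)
The plan is to assemble the three computations of \Cref{section rank 2 Fourier} and \Cref{section lower rank Fourier} into a single statement, and then to propagate rationality from $g_f=1$ to general $g_f\in\bff{M}(\A_f)$. First I would apply \Cref{prop at some point Eisenstein series is modular form} to conclude that $E_\ell(g,\ell+1)$ is a weight $\ell$ quaternionic modular form on $\bff{G}$, so that \Cref{thm Fourier expansion theorem of quaternionic modular forms} produces the abstract Fourier expansion displayed in the statement. The decomposition of \Cref{lemma rank decomposition of Eisenstein series} then indexes contributions by the three $\bff{P}$-orbits on the space of isotropic lines in $\bff{V}$, which correspond precisely to the ``rank'' of $T$ (rank $0$, $1$, or $2$) in the Fourier expansion of $E_\ell$.

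Next, for $g_f=1$, I would substitute the three completed computations. The $\bff{N}$-constant term is handled by \Cref{prop constant term}: the orbits $i=1$ and $i=2$ contribute nothing to it by \Cref{prop vanishing of i=1 contribution to constant term} and \Cref{prop no contribution of i=2 to constant term}, so only $f_\ell(g,\ell+1)=|\nu(g)|_{\A_E}^{\ell+1}\zeta_E(\ell+1)\pi^{-2\ell-1}[u_1^\ell][u_2^\ell]$ survives, visibly a rational multiple of the required quantity. For non-zero isotropic $T$, \Cref{prop rank 1 Fourier coefficient} yields $a_T(E_\ell,1)=C_\ell\,\sigma_{E,\ell}(T)$, which vanishes outside $\bff{V}_0(\Z)$. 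For anisotropic $T$, \Cref{thm rank 2 Fourier coefficient} yields $a_T(E_\ell,1)=D_{n,\ell}\prod_{p\mid\lrangle{T}{T}}Q_{T,p}(p^{\ell-(n-1)/2})$, which again vanishes unless $T\in\bff{V}_0(\Z)$ with $\lrangle{T}{T}>0$. The rationality of $D_{n,\ell}$ follows from the classical identity $\zeta(2k)=(2\pi)^{2k}|B_{2k}|/(2\,(2k)!)$ applied at $2k=2\ell-n+2$, a positive even integer because $n$ is even; integrality of each $Q_{T,p}(p^{\ell-(n-1)/2})$ is guaranteed by the parity constraints on the coefficients recorded in \Cref{prop Siegel seris split case}, \Cref{prop Siegel series inert case}, and \Cref{prop Siegel series unitary ramified case}.

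Finally, to upgrade the rationality of $a_T(E_\ell,g_f)$ from $g_f=1$ to general $g_f\in\bff{M}(\A_f)$, I would exploit $\bff{M}$-equivariance one prime at a time. A direct computation exactly like the one preceding \eqref{eqn tranform of rank 2 Fourier}, carried out separately for $i=1$ and $i=2$, shows that for $m=(z,h)\in\bff{M}(\A_f)$ and each finite prime $p$, the local integral $E_{\ell,i,p}^T(m_p,\ell+1)$ equals $|\nu(m_p)|_{E_p}^{n-\ell}$ times the local Siegel series attached to the translated local vector $z_p T\cdot h_p\in\bff{V}_0(\Q_p)$. Rationality of the latter is already captured by the explicit local formulas of \Cref{section rank 2 Fourier coefficients finite part} and by the evaluation in \Cref{prop non-arch rank 1 Fourier}, while $|\nu(m_p)|_{E_p}$ is a positive rational. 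Coupled with the archimedean factor, which depends only on the rational vector $T$ and produces the previously identified rational multiple of $\call{W}_T(g_\infty)$, this gives $a_T(E_\ell,m)\in\Q$. The genuine technical obstacle in this entire program was resolved earlier, namely in the ramified Siegel series calculation of \Cref{thm Siegel series ramified case}; the present assembly is essentially bookkeeping of rational prefactors.
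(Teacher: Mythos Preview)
Your proof is correct and follows the same assembly strategy as the paper, which simply cites \Cref{thm rank 2 Fourier coefficient}, \Cref{prop rank 1 Fourier coefficient}, and \Cref{prop constant term}; you have spelled out the $\bff{M}(\A_f)$-equivariance step that the paper leaves implicit in the ``in particular'' clauses of those results. One small inaccuracy: for $i=1$ the equivariance factor is $|\nu(m_p)|_{E_p}^{\ell+1}$ (see the computation at the start of \Cref{section rank 1 Fourier non-archimedean}), not $|\nu(m_p)|_{E_p}^{n-\ell}$ as for $i=2$; this does not affect your rationality conclusion since the factor is rational either way.
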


\section{Conjectural Saito-Kurokawa type lift}\label{Saito-Kuro}
\label{section conj Saito Kurokawa lift}
For a prime $p$ and a vector $T\in \bff{V}_{0}(\Z)$ with $\lrangle{T}{T}>0$ and $p^{k}\| \langle T,T\rangle$, 
define \[\widetilde Q_{T,p}(X):=X^{-k}Q_{T,p}(X)\in\Z[p^{1/2}][X,X^{-1}],\] 
which is a Laurent polynomial satisfying the functional equation $\widetilde Q_{T,p}(X^{-1})=\widetilde Q_{T,p}(X)$.
The $T$th Fourier coefficient of the Eisenstein series $E_\ell(g_{\infty},s=\ell+1)$ at $g_{\infty}\in \bff{G}(\R)$ is 
$$
D_{n,\ell} \lrangle{T}{T}^{\ell-\frac{n-1}{2}}\prod_{p} \widetilde Q_{T,p}(p^{\ell-\frac{n-1}{2}})\call{W}_{T}(g_{\infty}).
$$

Let $h$ be a cuspidal Hecke eigenform on $\SL_{2}(\Z)$ with weight $2\ell-n+2$,
and $\pi_{h}$ the automorphic representation of $\GL_{2}$ associated with $h$.
Let $\rmm{diag}(\alpha_{p},\alpha_{p}^{-1})$ be the Satake parameter of $\pi_{h,p}$.
\begin{defi}
    \label{def coeff of lift}
    For any $T\in \bff{V}_{0}$ with $\lrangle{T}{T}>0$ and $g_{f}=n_{f}m_{f}k_{f}\in\bff{G}(\A_{f})$, where $n_{f}\in \bff{N}(\A_{f})$, $m_{f}=(z_{f},h_{f})\in \bff{M}(\A_{f})$ and $k_{f}$ lies in the stabilizer of $\widehat{\Z}b_{1}\oplus\bff{V}_{0}(\widehat{\Z})\oplus \widehat{\Z}b_{2}$ in $\bff{G}(\A_{f})$,
    define 
    \[A_{h}(T)(g_{f}):=|z_{f}|_{\A_{E,f}}^{\frac{n+1}{2}}\chi_{T}(n_{f})\lrangle{T}{T}^{\ell-\frac{n-1}{2}}\prod_{p}\widetilde{Q}_{z_{p}T\cdot h_{p},p}(\alpha_{p}).\]
\end{defi}
Let $F_h$ be the function on $\bff G(\Bbb A)$ whose constant term along $Z$ has the Fourier expansion:
\begin{align*}
    F_{h,Z}(g)=\sum_{\substack{T\in \bff{V}_{0}\\ \lrangle{T}{T}>0}} A_h(T)(g_{f})\call{W}_{T}(g_\infty),\text{ for any }g=g_f g_\infty \in G(\A).    
\end{align*}
Following \cite{Ik01, KY2}, we make the following conjecture:
\begin{conj}\label{conj modularity of lift}
    The function $F_h$ is a quaternionic cuspidal Hecke eigenform of weight $\ell$ on $\bff{G}$.
\end{conj}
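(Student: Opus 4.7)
The plan is to verify four properties in turn: (1) convergence and the analytic conditions of a quaternionic modular form, (2) cuspidality, (3) left $\bff{G}(\Q)$-invariance (the main obstacle), and (4) the Hecke eigenform property.

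First I would establish convergence of the Fourier series for $F_{h,\bff{Z}}(g)$. Since $h$ is cuspidal of weight $2\ell-n+2$, Deligne's bound gives $|\alpha_p|=1$, so each Euler factor $\widetilde{Q}_{T,p}(\alpha_p)$ is polynomially bounded in $\lrangle{T}{T}$; combined with the rapid $K$-Bessel decay of $\call{W}_T(g_\infty)$, the series converges absolutely and uniformly on compacta. Moderate growth, right $K_\infty$-equivariance, and annihilation by the Schmid operators $\call{D}_\ell^\pm$ on $\bff{G}(\R)$ are then inherited termwise from the corresponding properties of $\call{W}_T$ established in \Cref{thm Fourier expansion theorem of quaternionic modular forms}. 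This handles conditions (2) and (3) of \Cref{def quaternionic modular form on unitary groups} automatically.

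Second, cuspidality along $\bff{P}$ is built into the construction: the sum runs only over anisotropic $T$, so both the $\bff{N}$-constant term and all rank-$1$ contributions vanish. To complete cuspidality one must also establish vanishing of constant terms along the other standard parabolics of $\bff{G}$; I would extend the Fourier expansion framework of \cite{HMY} to these parabolics, or alternatively invoke an abstract vanishing coming from the fact that $\Pi_\ell$ is a discrete series with no embedded automorphic constant term of the appropriate type.

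The main obstacle is step three: proving left $\bff{G}(\Q)$-invariance. Because $F_h$ is specified only through its Fourier coefficients, invariance under Hecke double cosets from $\bff{G}(\Q)$ is far from formal. My strategy would be to construct $F_h$ intrinsically rather than coefficient-by-coefficient, along one of two routes. Following Ikeda's paradigm \cite{Ik01}, one realizes $F_h$ as an arithmetic theta lift from $h$ via a dual pair involving $\rmm{SL}_2$ and $\bff{G}=\rmm{U}(2,n)$ embedded in a larger similitude group; alternatively, one expresses $F_h$ as a regularized residue or an integral transform of the Eisenstein series $E_\ell(g,s)$ computed in this paper, exploiting the fact that the polynomials $\widetilde{Q}_{T,p}$ encode exactly the $p$-adic Siegel data appearing in \Cref{thm rank 2 Fourier coefficient}. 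The functional equations $\widetilde{Q}_{T,p}(X^{-1})=\widetilde{Q}_{T,p}(X)$ and the covariance of $A_h(T)(g_f)$ under substitutions $T\mapsto zT\cdot h$ with $(z,h)\in\bff{M}(\Q)$ are the local shadows of the desired global automorphy, and they give compatibility with $\bff{P}(\Q)$; the genuinely nontrivial content is invariance under a Weyl element exchanging the two maximal parabolic strata, which is precisely what a theta or residue construction supplies for free.

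Once modularity is in hand, the Hecke eigenform property reduces to an unramified local calculation. At each prime $p\nmid D$, applying a spherical Hecke operator of $\bff{G}(\Q_p)$ to $F_h$ amounts to averaging $A_h(T)(g_f)$ over an explicit coset decomposition, which by the definition of $A_h(T)$ transforms the local factor $\widetilde{Q}_{z_pT\cdot h_p,p}(\alpha_p)$ into a scalar multiple determined by $\alpha_p,\alpha_p^{-1}$ and the Satake transform of the operator. Identifying the Satake parameters of $F_h$ with the ones prescribed by the Langlands functoriality of the conjectured lift (analogous to the Ikeda lift transfer) is the secondary technical difficulty, and it would follow from the same theta-lift or residue presentation used to prove modularity.
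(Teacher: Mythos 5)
The statement you are addressing is \Cref{conj modularity of lift}, which the paper explicitly presents as a \emph{conjecture}: the authors give no proof, only supporting evidence (the computation of the expected standard $L$-function of $\Pi_{F_h}$, consistency with Adams' conjecture on Arthur parameters under theta correspondence, and the expectation that $F_h$ agrees with the theta lift from $\rmm{U}(1,1)$ in \cite[Theorem 5.4]{HMY}). So there is no proof in the paper to compare yours against, and your text should be read as a research plan rather than a proof.

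Judged as a proof, your proposal has a genuine gap, and you have correctly located it yourself: left $\bff{G}(\Q)$-invariance. The function $F_h$ is specified only through the prescribed Fourier expansion of its constant term along $\bff{Z}$; before one can speak of moderate growth, cuspidality, or Hecke eigenvalues, one must first show that an automorphic function with exactly these Fourier coefficients \emph{exists}. Your step one (convergence and termwise inheritance of the $K_\infty$-equivariance and the $\call{D}_\ell^\pm$-annihilation from $\call{W}_T$) only produces a function on $\bff{N}(\A)\bff{M}(\A)K_\infty$ invariant under $\bff{P}(\Q)$; the covariance of $A_h(T)$ under $T\mapsto zT\cdot h$ and the functional equation of $\widetilde Q_{T,p}$ give, as you say, compatibility with the parabolic, but invariance under a Weyl element is precisely the content of the conjecture and is not supplied by anything in the paper. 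Your two proposed routes (an Ikeda-style arithmetic theta lift, or a residue/integral transform of $E_\ell(g,s)$) are plausible and indeed match the authors' own expectation that $F_h$ is the theta lift of \cite{HMY}, but neither is carried out: asserting that such a construction yields the stated Fourier coefficients $A_h(T)$ is itself a substantial computation (the analogue of the main theorems of \cite{Ik01} and \cite{KY2}), not a formality. The same applies to your step four: the Hecke eigenform property and the identification of Satake parameters are deferred to the same unexecuted construction. In short, the proposal is a reasonable strategy outline for attacking an open problem, not a proof, and the essential mathematical content of the conjecture remains untouched.
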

We expect $F_h$ to be the same as the one in \cite[Theorem 5.4]{HMY} constructed by the theta lift from $\rmm{U}(1,1)$.
An evidence is that admitting \Cref{conj modularity of lift},
the standard $L$-function of the cuspidal automorphic representation $\Pi_{F_h}$ of $\rmm{U}(2,n)$ generated by $F_{h}$ satisfies:
\[\rmm{L}(s,\Pi_{F_{h}},\rmm{St})=\rmm{L}(s,\rmm{BC}(\pi_{h}))\prod_{i=0}^{n-1}\zeta_{E}\left(s+\frac{n-1}{2}-i\right),\]
where $\rmm{BC}(\pi_h)$ is the base change of $\pi_h$ to $E$. 
This $L$-function is obtained by the calculation in \cite[\S 2.1.2]{KK} and the fact that $\Pi_{F_{h},p}\simeq \rmm{Ind}_{\bff{P}(\Q_{p})}^{\bff{G}(\Q_{p})}|\nu|_{E_{p}}^{s_{p}}\delta_{\bff{P}}^{\frac{1}{2}}$,
where $p^{s_{p}}=\alpha_{p}$.
This shows that the (conjectural) Arthur parameter of $\Pi_{F_{h}}$ is the sum of the Arthur parameter of the transfer $\widetilde{\pi_{h}}$ of $\pi_{h}$ to $\rmm{U}(1,1)$ and the irreducible $n$-dimensional representation of Arthur's $\SL_{2}(\C)$,
which matches Adam's conjecture \cite[Conjecture 4.1]{Gan} on how Arthur parameter behaves under the theta correspondence.

Moreover,
$\Pi_{F_{h}}$ is a CAP representation in a general sense, namely,
for almost all finite place $p$, $\Pi_{F_{h},p}$ is isomorphic to the $\Q_{p}$-component of the CAP representation 
\[\rmm{Ind}_{\bff{Q}}^{\bff{G}^{*}}\left(|\cdot|^{\frac{n-1}{2}}_{\A_{E}}\otimes|\cdot|_{\A_{E}}^{\frac{n-3}{2}}\otimes\cdots\otimes|\cdot|_{\A_{E}}^{\frac{1}{2}}\otimes \widetilde{\pi_{h}}\right)\otimes\delta_{\bff{Q}}^{\frac{1}{2}}\]
of the quasi-split inner form $\bff{G}^{*}=\rmm{U}(\frac{n+2}{2},\frac{n+2}{2})$ of $\bff{G}$,
where $\bff{Q}$ is a parabolic subgroup whose Levi factor is isomorphic to $(\rmm{Res}_{E/\Q}\GL_{1})^{\frac{n}{2}}\times \rmm{U}(1,1)$.


\begin{thebibliography}{99}

\bibitem[\textsc{Bai}73]{B} W.L. Baily Jr., {\em On the Fourier coefficients of certain Eisenstein series on the adele group}, In:
Number Theory, Algebraic Geometry, and Commutative Algebra: in Honor of Y. Akizuki,
Tokyo: Kinokuniya 1973.

\bibitem[\textsc{Bor}66]{Bo} A. Borel, {\em Introduction to automorphic forms}, Algebraic Groups and Discontinuous Subgroups (Proc. Sympos. Pure Math., Boulder, Colo., 1965), pp. 199-210, 
American Mathematical Society, Providence, RI, 1966.

\bibitem[\textsc{Gan}24]{Gan} W.T. Gan, {\em Automorphic forms and the theta correspondence}, Automorphic forms beyond $\mathrm{GL}_{2}$, Math. Surveys Monogr. 278, American Mathematical Society, Providence, RI, 2024, 59-112. 

\bibitem[\textsc{GW}96]{GW} B. Gross and N. Wallach, {\em On quaternionic discrete series representations, and their continuations},
J. Reine Angew. Math. {\bf 481} (1996), 73--123.

\bibitem[\textsc{Har}81]{H} M. Harris, {\em The rationality of holomorphic Eisenstein series}, Inv. Math. {\bf 63} (1981), 305-310.


\bibitem[\textsc{HMY}25]{HMY} A. Hilado, F. McGlade and P. Yan, {\em Fourier coefficients and algebraic cusp forms on $U(2,n)$}, arXiv:2404.17743.


\bibitem[\textsc{Ike01}]{Ik01} T. Ikeda, {\em On the lifting of elliptic cusp forms to Siegel cusp forms of degree $2n$}, Ann. of Math. (2) {\bf 154} (2001), no. 3, 641--681.

\bibitem[\textsc{Kar}78]{Ka1} M. Karel, {\em Eisenstein series and fields of definition}, Comp. Math. {\bf 37} (1978), 121--169.

\bibitem[\textsc{KK}11]{KK} H-H.~Kim and W.~Kim, {\em On local L-functions and normalized intertwining operators II; quasi-split groups}, Clay Math. Proc. 13, American Mathematical Society, Providence, RI, 2011, 265--295.

\bibitem[\textsc{KY}16]{KY} H-H.~Kim and T.~Yamauchi, {\em Cusp forms on the exceptional group of type $E_7$}, Comp. Math. {\bf 152} (2016), no. 2, 223--254.


\bibitem[\textsc{KY}25]{KY2}H-H.~Kim and T.~Yamauchi, {\em Ikeda type lift on $SO(3,n+1)$}, arXiv:2512.03412.

\bibitem[\textsc{Pol}20a]{Po20} A.~Pollack, {\em The Fourier expansion of modular forms on quaternionic exceptional groups}, Duke Math. J. {\bf 169} (2020), 1209--1280.

\bibitem[\textsc{Pol}20b]{Po20b} A.~Pollack, {\em The minimal modular form on quaternionic $\rmm{E}_{8}$}, J. Inst. Math. Jussieu. (2020), published online. 

\bibitem[\textsc{Pol}22]{Po} A.~Pollack, {\em Modular forms on indefinite orthogonal groups of rank three. With appendix ``Next to minimal representation" by Gordan Savin}, J. Num. Th. {\bf 238} (2022), 611--675. 


\bibitem[\textsc{Shi}78]{Sh} G. Shimura, {\em The arithmetic of automorphic forms with respect to a unitary group}, Ann. of
Math. {\bf 107} (1978), 569--605.

\bibitem[\textsc{Sch}85]{Scharlau} W. Scharlau, Quadratic and Hermitian forms, Grundlehren der mathematischen Wissenschaften. {\bf 270} (1985), Springer-Verlag, Berlin.

\bibitem[\textsc{Tsa}76]{Ts} L.C. Tsao, {\em The rationality of the Fourier coefficients of certain Eisenstein series on tube
domains}, Comp. Math. {\bf 32} (1976), 225--291.

\bibitem[\textsc{Wal}03]{Wallach} N. Wallach, {\em Generalized {W}hittaker vectors for holomorphic and quaternionic representations}, Comment. Math. Helv. {\rm 78} (2003), 266--307.

\end{thebibliography}
\end{document}